\newcommand{\R}{\ensuremath{\mathbb{R}}}
\newcommand{\F}{\ensuremath{\mathcal{F}}}
\newcommand{\CF}{\ensuremath{\mathcal{F}}}
\newcommand{\z}{\ensuremath{\mathcal{Z}}}
\newcommand{\Ge}{\ensuremath{\mathcal{G}}}
\newcommand{\He}{\ensuremath{\mathcal{H}}}
\newcommand{\CO}{\ensuremath{\mathcal{O}}}
\newcommand{\la}{\lambda}
\newcommand{\f}{\varphi}
\newcommand{\al}{\alpha}
\newcommand{\be}{\beta}
\newcommand{\s}{\Sigma}
\newcommand{\sgn}{\mathrm{sign}}
\DeclareMathOperator{\Span}{Span}
\def\p{\partial}
\def\e{\varepsilon}
\newtheorem {theorem} {Theorem}%[section]
\newtheorem {proposition} [theorem]{Proposition}
\newtheorem {lemma}  [theorem]{Lemma}
\newtheorem {remark} [theorem]{Remark}
\newtheorem {mtheorem} {Theorem}
\newtheorem {claim}{Claim}
\begin{document}
\renewcommand{\arraystretch}{1.5}

\title[Melnikov analysis for planar piecewise linear vector fields]{Higher order Melnikov analysis for planar piecewise linear vector fields with nonlinear switching curve}

\author[K. da S. Andrade]{Kamila da S. Andrade$^1$}
\address{$^1$ Instituto de Matem\'{a}tica e Estat\'{i}stica, Universidade Federal de Goi\'as\\
	Campus II, Samambaia, \  74001-970, Goi\^ania, GO, Brazil.}
\email{kamila.andrade@ufg.br}

\author[O.A.R. Cespedes]{Oscar A. R. Cespedes$^2$}
\address{$^2$ Departamento de Matem\'{a}tica, Universidade
	Federal de Vi\c{c}osa,\ Campus Universit\'ario,
	36570-000, Vi\c cosa, MG, Brazil} 
\email{oscar.ramirez@ufv.br} 

\author[D. R. Cruz]{Dayane R. Cruz$^3$}
\author[D.D. Novaes]{Douglas D. Novaes$^3$}
\address{$^3$Departamento de Matem\'{a}tica, Instituto de Matem\'{a}tica, Estat\'{i}stica e Computa\c{c}\~{a}o Cient\'{i}fica, Universidade
Estadual de Campinas, \ Rua S\'{e}rgio Buarque de Holanda, 651, Cidade Universit\'{a}ria Zeferino Vaz, 13083-859, Campinas, SP,
Brazil} 
\email{dayanemat29@gmail.com, ddnovaes@unicamp.br}

\subjclass[2010]{34A36,37G15}

\keywords{Filippov systems, nonlinear switching manifold, piecewise linear differential systems, Melnikov theory, periodic solutions}

\maketitle

\begin{abstract}
In this paper, we are interested in providing lower estimations for the maximum number of limit cycles $H(n)$ that planar piecewise linear differential systems with two zones separated by the curve $y=x^n$ can have, where $n$ is a positive integer. For this, we perform a higher order Melnikov analysis for piecewise linear perturbations of the linear center. In particular, we obtain that $H(2)\geq 4,$ $H(3)\geq 8,$ $H(n)\geq7,$ for $n\geq 4$ even, and  $H(n)\geq9,$ for $n\geq 5$ odd. This improves all the previous results for $n\geq2.$ Our analysis is mainly based on some recent results about Chebyshev systems with positive accuracy and Melnikov theory, which will be developed at any order for a class of nonsmooth differential systems with nonlinear switching manifold.
\end{abstract}

\section{Introduction}

Recently, the interest in nonsmooth differential systems has grown mainly due to the amount of engineering, physical, biological, and real processes problems that are naturally modeled by this class of differential systems (see, for instance,  \cite{BerBuddCham2008} and the references therein for piecewise linear differential models of real processes). Much of the questions on nonsmooth differential systems arise as extensions of classical and important results already established for smooth differential systems. Since these questions appear naturally in many applications, they are not merely mathematical or academic (see, for instance, \cite{ColJefLazOlm2017,MakLam2012,Sim2010,Teixeira2011}). 

Motivated by the second part of the 16th Hilbert's Problem, there exists an increasing interest on establishing a uniform upper bound for the maximum number of limit cycles that planar piecewise linear differential systems can have. In the research literature, one can find many papers addressing this problem assuming that the switching curve is a straight line (see, for instance, \cite{ArtLliMedTei2014,BPT13,BraMel2013,FrePonTor2012,FrePonTor2014,GianPli2001,HanZha2010,HuaYan2013,HuaYan2014,Lh10,Li2014,LliNovTei2015b,LNT15b,LliPon2012,LliTeiTor2013}, and references therein). In this case, no examples with more than $3$ limit cycles are known so far. In \cite{BraMel2014,NovPon2015}, it is shown that such an upper bound is strictly related to the nonlinearity of the switching curve. In this direction, piecewise linear system with two zones separated by a curve $y=x^n,$ with $n$ being a positive integer, has been addressed (see, for instance, \cite{BBLN19,LMN15,LliZha2019}).

Accordingly, given a positive integer $n$, let $H(n)$ denote the maximum number of limit cycles that planar piecewise linear systems with two zones separated by the curve $y=x^n$ can have. In this paper, we are interested in determining lower bounds for $H(n).$ For that, we consider the following planar piecewise linear vector field	
\begin{equation}\label{general-system1}
Z(x,y)=\left\{  \begin{array}{lc}
X(x,y) = \left(\begin{array}{c}
y+\displaystyle\sum_{i=1}^{k}\e^iP_i^+(x,y) \\
-x+\displaystyle\sum_{i=1}^{k}\e^iQ_i^+(x,y)
\end{array}\right), & y-x^n>0, \\ & \\
Y(x,y) = \left(\begin{array}{c}
y+\displaystyle\sum_{i=1}^{k}\e^iP_i^-(x,y)\\
-x+\displaystyle\sum_{i=1}^{k}\e^iQ_i^-(x,y)
\end{array}\right), & y-x^n<0, 
\end{array}
\right.
\end{equation}
where $n$ is a positive integer, and $P_i^{\pm}$ and $Q_i^{\pm}$ are affine functions provided by
\[ \begin{array}{rcl}
P_i^+(x,y) & = &a_{0i}+a_{1i}x+a_{2i}y, \\
P_i^-(x,y) & = &\al_{0i}+\al_{1i}x+\al_{2i}y, \\
Q_i^+(x,y) & = &b_{0i}+b_{1i}x+b_{2i}y, \\
Q_i^-(x,y) & = &\be_{0i}+\be_{1i}x+\be_{2i}y,
\end{array}\]
with $a_{ji},\al_{ji},b_{ji},\be_{ji}\in\R,$ for $i\in\{1,\ldots,k\}$ and $j\in\{0,1,2\}$. The switching curve of system \eqref{general-system1} is provided by $\Sigma=\{(x,y)\in\R^2:\, y=x^n\}$.
Here, we assume the Filippov's convention \cite{filippov2013differential} for trajectories of \eqref{general-system1}.

Usually, periodic solutions of differential systems are studied by means of Poincar\'{e} maps. 
 Since system \eqref{general-system1} is a $k$-order perturbation of the linear center $(x',y')=(y,-x),$ it is easy to see that, for $|\e|$ sufficiently small, a Poincar\'{e} Map $\pi_{\e}$ can be defined in the section $S=\{(x,y):\,x>0,\,y=0\},$ which is parameterized by $x.$ Moreover, for $|\e|$ sufficiently small, $(x;\e)\mapsto\pi_{\e}(x)$ is smooth (because it is composition of smooth functions), thus we can compute the Taylor expansion of $\pi_{\e}$ around $\e=0$ as
\[
\pi_{\e}(x)=x+\sum_{i=1}^{k} \e^i M_i(x)+\CO(\e^{k+1}).
\]
For each $i\in\{1,\ldots,k\},$ the function $M_i$ is called {\it Melnikov function of order $i$}. Denote $M_0=0$ and let $M_{\ell},$ for some $\ell\in\{1,2,\ldots,k\},$ be the first non-vanishing Melnikov function, that is $M_i= 0$ for $i\in\{0,\ldots,\ell-1\}$ and $M_{\ell}\neq0.$ Since periodic solutions of \eqref{general-system1} are in one-to-one correspondence with fixed points of  the Poincar\'{e} map $\pi_{\e}$, one can easily get as a simple consequence of the {\it Implicity Function Theorem} that simple zeros of $M_{\ell}$ correspond to limit cycles of \eqref{general-system1}. Accordingly, we denote by $m_{\ell}(n)$ the maximum number of simple zeros that the first non-vanishing Melnikov function $M_{\ell}$ can have for any choice of parameters $a_{ji},\al_{ji},b_{ji},\be_{ji}\in\R,$ for $i\in\{1,\ldots,\ell\}$ and $j\in\{0,1,2\}$.  

Under certain conditions, upper bounds for the maximum number of limit cycles  of \eqref{general-system1}, bifurcating from the period annulus of \eqref{general-system1}$\big|_{\e=0},$ can also be given based on $m_{\ell}(n)$ (see, for instance, \cite[Theorems 14 and 15]{LRT11} and \cite[Theorems 3.1, 3.2, and 3.3]{HY21}). However, in general, the values $m_{\ell}(n),$ for $\ell\in\{1,\ldots,k\},$ provide lower bounds for $H(n)$, indeed $H(n)\geq m_{\ell}(n)$ for every $\ell\in\{1,\ldots,k\}.$ In \cite{BPT13}, a higher order analysis of system \eqref{general-system1} was performed assuming a straight line as the switching curve, that is $n=1.$ It was shown that $m_1(1)=m_2(1)=1,$ $m_3(1)=2,$ and $m_{\ell}(1)=3$ for $\ell\in\{4,\ldots,7\}.$ The nonlinear case of switching curves was firstly addressed in \cite{LMN15} by means of {\it Averaging Theory}. In particular, it was shown that $m_1(2)=3.$ 
It is worth mentioning that the {\it Averaging Theory} is a classical method to attack this problem (see \cite{CLN17,LNT2014,SVM}), which has been recently developed for nonsmooth differential systems in \cite{ILN2017,LMN15,LliNovRod2017,LNT15} (see, also, \cite{Han17,HS15}). However, in these previous studies some strong conditions are assumed on the switching set when dealing with higher order perturbations. Indeed, in \cite{LMN15} it was observed that the first order averaging function can always be used for determining the number of zeros of the first Melnikov function, however  higher order averaged functions do not always control the bifurcation of isolated periodic solutions for nonsmooth differential systems. Thus, in \cite{BBLN19} the Melnikov functions up to order $2$ was obtained for a wider class of nonsmooth differential systems with nonlinear switching curve. In addition, it was shown that $m_1(3)=3$ and $m_2(3)=7.$ The known values in research literature for $m_{\ell}(n),$ for  $\ell\in\{1,\ldots,6\},$ are summarized in Table \ref{table1}.  In particular, these previous studies provided $H(1)\geq 3$, $H(2)\geq 2,$ and $H(3)\geq 7.$ 
\begin{table}[h!]
\centering 
	 {\bf Known results for $ m_{\ell}(n)$}   \vspace{0.2cm}
	 
\begin{tabular}{cccccc}
 &  & \multicolumn{4}{c}{Order $\ell$} \\ \cline{3-6} 
 & \multicolumn{1}{c|}{} & \multicolumn{1}{c|}{{\bf 1}} & \multicolumn{1}{c|}{{\bf 2}} & \multicolumn{1}{c|}{{\bf 3}} & \multicolumn{1}{c|}{${\bf 4\leq \ell\leq6}$} \\ \cline{2-6} 
\multicolumn{1}{c|}{\multirow{4}{*}{\begin{sideways}Degree $n$\end{sideways}}} & \multicolumn{1}{c|}{{\bf 1}} & \multicolumn{1}{c|}{1} & \multicolumn{1}{c|}{1} & \multicolumn{1}{c|}{2} & \multicolumn{1}{c|}{3} \\ \cline{2-6} 
\multicolumn{1}{c|}{} & \multicolumn{1}{c|}{{\bf 2}} & \multicolumn{1}{c|}{3} & \multicolumn{1}{c|}{--} & \multicolumn{1}{c|}{--} & \multicolumn{1}{c|}{--} \\ \cline{2-6} 
\multicolumn{1}{c|}{} & \multicolumn{1}{c|}{{\bf 3}} & \multicolumn{1}{c|}{3} & \multicolumn{1}{c|}{7} & \multicolumn{1}{c|}{--} & \multicolumn{1}{c|}{--} \\ \cline{2-6} 
\multicolumn{1}{c|}{} & \multicolumn{1}{c|}{${\bf n\geq3}$} & \multicolumn{1}{c|}{--} & \multicolumn{1}{c|}{--} & \multicolumn{1}{c|}{--} & \multicolumn{1}{c|}{--} \\ \cline{2-6} 
\end{tabular}
\bigskip
\caption{Known values in the research literature. In particular, $H(1)\geq 3$, $H(2)\geq 2,$ and $H(3)\geq 7.$ }\label{table1}
\end{table}

Our first main result completes Table \ref{table1} by providing the values $m_{\ell}(n),$ for $\ell\in\{1,\ldots,6\}$ and $n\in\mathbb{N}.$ In particular, we obtain that $H(2)\geq 4,$ $H(3)\geq 8,$ $H(n)\geq7,$ for $n\geq 4$ even, and  $H(n)\geq9,$ for $n\geq 5$ odd, which improves all the previous results for $n\geq2$. The contribution of Theorem \ref{Theorem-Melnikov2} is summarized in Table \ref{table2}. 

     \begin{mtheorem}\label{Theorem-Melnikov2}
  Consider the planar piecewise linear differential system \eqref{general-system1}. For $n\in\mathbb{N}$ and $\ell\in\{1,\ldots,6\},$ we have the following values for $m_{\ell}(n)$:
  \begin{enumerate}[$(i)$]
  \item $m_1(1)=1$, $m_1(2)=3$, $m_1(n)=3$ for $n\geq 3$ odd, and $m_1(n)=4$ for $n\geq4$ even;

  \smallskip
  
  \item $m_2(1)=1$, $m_2(2)=4$, $m_2(n)=7$ for $n\geq3$;
  
  \smallskip
  
  \item $m_3(1)=2$, $m_3(2)=4$, $m_3(n)=7$ for $n\geq3$;
  
  \smallskip
  
  \item  for $\ell\in\{4,5\},$ $m_{\ell}(1)=3$, $m_{\ell}(2)=4$, $m_{\ell}(n)=7$ for $n\geq3$;

 \item $m_6(1)=3$, $m_6(2)=4$, $8\leq m_6(3)\leq 10$,  $m_6(n)=7$ for $n\geq 4$ even, and $9\leq m_6(n)\leq 14$ for $n\geq 5$ odd.
 
  \end{enumerate}
  Consequently, $H(2)\geq 4,$ $H(3)\geq 8,$ $H(n)\geq7,$ for $n\geq 4$ even, and  $H(n)\geq9,$ for $n\geq 5$ odd.
  \end{mtheorem}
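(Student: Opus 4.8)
The plan is to compute, for each order $\ell\in\{1,\ldots,6\}$, a sufficiently explicit closed form for the first non-vanishing Melnikov function $M_\ell$ after imposing the vanishing conditions $M_1=\cdots=M_{\ell-1}=0$, and then to count its maximal number of simple zeros on the interval of definition by recognizing $M_\ell$ (up to a non-vanishing factor) as a linear combination of an ordered family of functions that forms an \emph{extended complete Chebyshev system}, possibly with positive accuracy. Concretely, I would first set up the half-return maps of $X$ and $Y$ from the switching curve $\Sigma=\{y=x^n\}$ back to $\Sigma$: parametrize $\Sigma$ near the period annulus by the $x$-coordinate of the contact point, and use the fact that the unperturbed flow is the rigid rotation $(x',y')=(y,-x)$, so the unperturbed transition from one intersection with $\Sigma$ to the next is explicit. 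Expanding each half-return map in $\e$ via the variational equations of \eqref{general-system1} along the circular orbits yields integral formulas for the $M_i$; the nonlinearity of $\Sigma$ enters through the (nonconstant) flight times and through the geometry of the entry/exit points, which depend on $n$ and on the parity of $n$ (for $n$ odd the curve is a graph over all of $\R$ and the return map is genuinely piecewise; for $n$ even there is a tangency at the origin affecting the topology of the annulus). I would assemble $\pi_\e(x)=x+\sum \e^i M_i(x)+\CO(\e^{k+1})$ from the composition of the two half-maps, as outlined in the introduction.

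Next, the bootstrapping step: assuming $M_1\equiv\cdots\equiv M_{\ell-1}\equiv0$ forces algebraic relations among the parameters $a_{ji},\al_{ji},b_{ji},\be_{ji}$ for $i<\ell$; substituting these back simplifies $M_\ell$ drastically, and—crucially—the ``new'' parameters $a_{j\ell},\al_{j\ell},b_{j\ell},\be_{j\ell}$ enter $M_\ell$ linearly. So $M_\ell(x)=\sum_r c_r\, g_r^{(n)}(x)$ where the $c_r$ are independent linear combinations of the order-$\ell$ parameters together with contributions of the already-constrained lower-order ones, and the $g_r^{(n)}$ are fixed functions (powers of $x$, powers of $1\pm x^{\,\text{something}}$, possibly an arctangent or logarithmic term coming from the flight-time integrals, with the precise list depending on $n\bmod 2$). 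The count $m_\ell(n)$ is then the number of zeros of a generic element of $\Span\{g_r^{(n)}\}$. For the lower bounds in $(i)$–$(iv)$ and the lower parts of $(v)$ I would invoke the ECT-system (with positive accuracy) machinery referenced in the abstract to show that this span realizes exactly (or at least the claimed number of) simple zeros; for the upper bounds I would use a Derivation–Division / Rolle argument on the same ordered family, or the accuracy bound, to cap the zero count. Finally, the consequences $H(2)\ge4$, $H(3)\ge8$, etc., follow by taking $\ell$ to be the order achieving the largest $m_\ell(n)$ in the table ($\ell=2$ for $n=2$; $\ell=6$ for $n\ge3$) and applying the Implicit Function Theorem to perturb the simple zeros of $M_\ell$ into limit cycles, exactly the argument sketched before the statement.

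The main obstacle, I expect, is twofold. First, obtaining workable closed forms for $M_3,\ldots,M_6$: the higher-order Melnikov functions for a piecewise system with \emph{nonlinear} switching curve involve iterated integrals along the rotation whose integrands are rational in $x$ and in $(1\pm x^{n})$-type expressions, and the algebra of imposing $M_1=\cdots=M_{\ell-1}=0$ and re-substituting grows quickly—this is where a careful recursive scheme (and likely computer algebra) is needed, and where the paper's preliminary development of Melnikov theory ``at any order'' for this class of systems does the heavy lifting. Second, the sharp zero-counting: the generating family $\{g_r^{(n)}\}$ is not automatically a Chebyshev system, especially once the parity-dependent terms appear, so establishing that it is an ECT-system with the right accuracy (or computing that accuracy exactly) is delicate; this is precisely why the answer splits by parity and why for $m_6(3)$ and $m_6(n)$ with $n\ge5$ odd only a range, not an exact value, is claimed—there the accuracy is not pinned down and only the lower bound (realizability of $8$, resp.\ $9$, simple zeros) is proved, which still suffices for the stated estimates on $H(n)$.
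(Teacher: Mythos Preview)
Your proposal is correct and follows essentially the same strategy as the paper: compute $M_\ell$ after imposing $M_1=\cdots=M_{\ell-1}=0$, recognize it (up to a nonvanishing factor) as an element of the span of an explicit finite family of functions depending on the parity of $n$, and then determine the maximal number of simple zeros via the ECT/Chebyshev-with-accuracy machinery, with the ranges in $(v)$ arising precisely because the relevant family for $\ell=6$, $n$ odd fails to be sharp. The only procedural difference is that the paper passes to polar coordinates and takes $\theta$ as time so that the system fits the standard form of its general higher-order Melnikov result (Theorem~\ref{thm:melnikov}), rather than working with half-return maps on $\Sigma$ directly; after the change of variable $x=r\cos\theta_1(r)$ the resulting function families are exactly the $\mathcal{F}_j^k$ analyzed in Section~\ref{sec:chebyshev}, and your anticipated Rolle/derivation--division arguments are carried out there as Wronskian computations.
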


  \begin{table}[h]

\centering 
	{\bf Our contribution} \vspace{0.2cm}

\begin{tabular}{cccccccc}
 &  & \multicolumn{6}{c}{Order $k$} \\ \cline{3-8} 
 & \multicolumn{1}{c|}{} & \multicolumn{1}{c|}{{\bf 1}} & \multicolumn{1}{c|}{{\bf 2}} & \multicolumn{1}{c|}{{\bf 3}} &\multicolumn{1}{c|}{{\bf 4}} &\multicolumn{1}{c|}{{\bf 5}} & \multicolumn{1}{c|}{${\bf 6}$} \\ \cline{2-8} 
\multicolumn{1}{c|}{\multirow{4}{*}{\begin{sideways}Degree $n$\end{sideways}}} & \multicolumn{1}{c|}{{\bf 1}} & \multicolumn{1}{c|}{1} & \multicolumn{1}{c|}{1} & \multicolumn{1}{c|}{2} & \multicolumn{1}{c|}{3}& \multicolumn{1}{c|}{3}& \multicolumn{1}{c|}{3} \\ \cline{2-8} 

\multicolumn{1}{c|}{} & \multicolumn{1}{c|}{{\bf 2}} & \multicolumn{1}{c|}{3} & \multicolumn{1}{c|}{4} & \multicolumn{1}{c|}{4} & \multicolumn{1}{c|}{4}& \multicolumn{1}{c|}{4}& \multicolumn{1}{c|}{4} \\ \cline{2-8} 

\multicolumn{1}{c|}{} & \multicolumn{1}{c|}{ ${\bf 3}$} & \multicolumn{1}{c|}{3} & \multicolumn{1}{c|}{7} & \multicolumn{1}{c|}{7} & \multicolumn{1}{c|}{7}& \multicolumn{1}{c|}{7}& \multicolumn{1}{c|}{$8\leq m_6\leq 10$} \\ \cline{2-8} 

\multicolumn{1}{c|}{} & \multicolumn{1}{c|}{${\bf n\geq 4}$ {\bf even}} & \multicolumn{1}{c|}{4} & \multicolumn{1}{c|}{7} & \multicolumn{1}{c|}{7} & \multicolumn{1}{c|}{7}& \multicolumn{1}{c|}{7}& \multicolumn{1}{c|}{7} \\ \cline{2-8} 

\multicolumn{1}{c|}{} & \multicolumn{1}{c|}{ ${\bf n\geq 5}$ {\bf odd}} & \multicolumn{1}{c|}{3} & \multicolumn{1}{c|}{7} & \multicolumn{1}{c|}{7} & \multicolumn{1}{c|}{7}& \multicolumn{1}{c|}{7}& \multicolumn{1}{c|}{$9\leq m_6\leq 14$} \\ \cline{2-8} 
\end{tabular}
\bigskip
	\caption{Our main result competes Table \ref{table1}. In particular, $H(2)\geq 4,$ $H(3)\geq 8,$ $H(n)\geq7,$ for $n\geq 4$ even, and  $H(n)\geq9,$ for $n\geq 5$ odd}\label{table2}
\end{table}  

In order to prove Theorem \ref{Theorem-Melnikov2}, we shall first compute the Melnikov functions up to order $6$ for system \eqref{general-system1}. For that, Theorem \ref{thm:melnikov} provides  the higher order Melnikov functions for a class of nonsmooth differential systems with nonlinear switching manifold, which generalizes the results obtained in \cite{BBLN19} at any order. Some recent results about Extended Chebyshev systems with positive accuracy \cite{NOTOR2017} are also applied to obtain Theorem \ref{Theorem-Melnikov2}.

 This paper is structured as follows. In Section \ref{sec:melnikov} we state our second main result, Theorem \ref{thm:melnikov}, which develop the Melnikov theory at any order for a class of nonsmooth differential systems with nonlinear switching manifold. Theorem \ref{thm:melnikov} is proven in the Appendix. In Section \ref{sec:chebyshev}, we provide some families of Extended Chebyshev systems and Extended Chebyshev systems with accuracy $1$, which are used in Sections \ref{sec:first} and \ref{sec:higher}, together with the Melnikov theory, to prove Theorem \ref{Theorem-Melnikov2}. Statement $(i)$ is proven in Section \ref{sec:first} and  statements $(ii)-(iv)$ are proven in Section \ref{sec:higher}.

\section{Melnikov functions}\label{sec:melnikov}

In this section, we establish the Melnikov functions at any order for a class of nonsmooth differential systems. Consider, an open subset $D\subset\R^d$, $\mathbb{S}^1=\R/T$ for some period $T>0$, and $k$ a positive integer. Let $\theta_j:D\to \mathbb{S}^1$, $j\in\{1,\ldots,N\}$, be $C^{k-1}$ functions such that $\theta_0(x)\equiv0<\theta_1(x)<\cdots<\theta_N(x)<T\equiv\theta_{N+1}(x)$, for all $x\in D$. Under the assumptions above, we consider the following piecewise smooth differential system
\begin{equation}\label{general-system2} 
\dot{x} = \sum_{i=1}^k\e^iF_i(t,x)+\e^{k+1}R(t,x,\e),
\end{equation}
where
\[
F_i(t,x)=\left\{
\begin{array}{ll}
F_i^0(t,x), & 0<t<\theta_1(x), \\
F_i^1(t,x), & \theta_1(x)<t<\theta_2(x), \\
\vdots & \\
F_i^N(t,x), & \theta_N(x)<t<T, 
\end{array}
\right.
\]
and
\[
R(t,x,\e)=\left\{
\begin{array}{ll}
R^0(t,x,\e), & 0<t<\theta_1(x), \\
R^1(t,x,\e), & \theta_1(x)<t<\theta_2(x), \\
\vdots & \\
R^N(t,x,\e), & \theta_N(x)<t<T, 
\end{array}
\right.
\]
with $F_i^j:\mathbb{S}^1\times D\rightarrow\R^d$, $R^j:\mathbb{S}^1\times D\times (-\e_0,\e_0)\rightarrow\R^d$, for $i\in\{1,\ldots,k\}$ and $j\in\{1,\ldots,N\}$, being $\mathcal{C}^{k}$ functions and $T-$periodic in the variable $t$. In this case, the switching manifold is provided by $\s=\{(\theta_i(x),x);\ x\in D,\ i\in\{0,1,\ldots,N\} \}$. For the sake of simplicity, denote
\begin{equation}\label{Fj}
F^j(t,x,\e)=\sum_{i=1}^{k}\e^iF_i^j(t,x)+\e^{k+1}R^i(t,x,\e), \text{ for } j\in\{0,\ldots,N\}.
\end{equation}

It is worth mentioning that the differential system \eqref{general-system2} is a particular form of the differential  systems previously considered in \cite{LMN15}, where first and second order averaging method for detecting periodic solutions of a wider class of nonsmooth systems were developed. This particular class of systems seems to have first appeared in \cite{Han17} and after in \cite{BBLN19}. It is written in a standard form suitable for applying techniques from regular perturbation theory, and then it is very common in the research literature as well as some variations using characteristic function (see also \cite{ILN2017, LliNovRod2017}).

As the main result of this section, Theorem \ref{thm:melnikov} provides sufficient conditions for $T$-periodic solutions $x(t,\varepsilon)$ of system \eqref{general-system2} to be given as simple zeros of the $i$th Melnikov function, 
\begin{equation*}\label{melnikovn}
M_i(x)=\dfrac{1}{i!}z_i^N(T,x),
\end{equation*}
where $z_i^j(t,x)$ is defined recursively for $i=1,\dots,k$ and $j=0,\dots,N$ as follows:
\begin{equation}\label{ztilde}
\begin{array}{rl}
z^0_1(t,x)=&\displaystyle\int_0^{t}F_1^0(s,x)ds,\\
z^j_1(t,x)=&z^{j-1}_1(\theta_j(x),x)+\displaystyle\int_{\theta_j(x)}^{t}F_1^j(s,x)ds,\\
z_i^0(t,x)=&i!\displaystyle\int_{0}^t\left( F_{i}^0(s,x)+\displaystyle\sum_{l=1}^{i-1}\displaystyle\sum_{b\in S_l}\dfrac{1}{b_1!b_2!2!^{b_2}\dots b_l!l!^{b_l}}\partial^{L_b}_xF_{i-l}^0(s,x)\displaystyle\prod_{m=1}^l\left(z_m^0(s,x)\right)^{b_m}\right) ds,\\
z_i^j(t,x)=&z_i^{j-1}(\theta_j(x),x)\\
&+i!\displaystyle\int_{\theta_j(x)}^t\left( F_{i}^j(s,x)+\displaystyle\sum_{l=1}^{i-1}\displaystyle\sum_{b\in S_l}\dfrac{1}{b_1!b_2!2!^{b_2}\dots b_l!l!^{b_l}}\partial^{L_b}_xF_{i-l}^j(s,x)\displaystyle\prod_{m=1}^l\left(z_m^j(s,x)\right)^{b_m}\right) ds\\
&+i!\displaystyle\sum_{p=1}^{i-1}\dfrac{1}{p!}\dfrac{\partial^{p}}{\partial\varepsilon^{p}}\left(\delta_{i-p}^j\left( A_j^p(x,\varepsilon),x\right)  \right) \Big|_{\varepsilon=0},
\end{array}
\end{equation}
where $\delta_i^j(t,x)=\dfrac{1}{i!}\left( z_i^{j-1}(t,x)-z_i^j(t,x)\right) $ and $A^{p}_{j}(x,\varepsilon)=\displaystyle\sum_{q=0}^p\dfrac{\varepsilon^q}{q!}\alpha_j^q(x)$ with
\begin{equation}\label{alphaj}
\alpha_j^q(x)=\displaystyle\sum_{l=1}^q\dfrac{q!}{l!}\displaystyle\sum_{u\in S_{q,l}}D^l\theta_j(x)\left(\displaystyle\prod_{r=1}^l w_{u_r}^j(x)\right),  \text{ for } q=1,\ldots ,k-1,
\end{equation}
and
\begin{equation}\label{omegaj}
\begin{array}{rcl}
w_1^j(x)&=&z_{1}^{j-1}(\theta_j(x),x),\\
w_i^j(x)&=&\dfrac{1}{i!}z_{i}^{j-1}(\theta_j(x),x)\\
& &+\displaystyle\sum_{a=1}^{i-1}\displaystyle\sum_{b\in S_a}\dfrac{1}{(i-a)!b_1!b_2!2!^{b_2}\dots b_a!a!^{b_a}}\partial_t^{L_b}z_{i-a}^{j-1}(\theta_j(x),x)\displaystyle\prod_{m=1}^a\left(\alpha_j^m(x) \right)^{b_m}.
\end{array}
\end{equation}

Here  $\partial_x^{L_b}G(t, x)$ denotes the derivative of order $L_b$ of a function G, with respect to the variable $x$, $S_a$ is the set of all a-tuples of non-negative integers $(b_1, b_2, \dots , b_a)$ satisfying $b_1 + 2b_2 + \dots +
ab_a = a$, $L _b= b_1 + b_2 + \dots + b_a$, and $S_{q,a}$  is the set of all a-tuples of positives integers $(b_1, b_2, \dots , b_a)$ satisfying $b_1 + b_2 + \dots +
b_a = q$. Considering the notations, we are able to enounce our main result on Melnikov functions.

\begin{mtheorem}\label{thm:melnikov}
	Consider the nonsmooth differential system \eqref{general-system2} and denote $M_0=0.$ Assume that, for some $\ell\in\{1,\ldots,k\},$ $M_i=0$, for $i=1,\dots,\ell-1,$ and $M_{\ell}\neq0$. If  $M_{\ell}(a^*)=0$ and $\det(DM_{\ell}(a^*))\neq0,$ for some $a^*\in D,$ then, for $|\e|\neq0$ sufficiently small, there exists a unique $T$-periodic solution $x(t,\e)$ of system \eqref{general-system2} satisfying $x(0,\e)\to a^*$ as $\e\to0$.  
\end{mtheorem}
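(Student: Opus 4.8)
The plan is to reduce Theorem \ref{thm:melnikov} to a standard application of the Implicit Function Theorem by constructing, for $|\e|$ small, a well-defined displacement map whose zeros correspond to $T$-periodic solutions of \eqref{general-system2}, and whose $\e$-expansion has $\e^{\ell}M_{\ell}$ as its leading term. First I would fix $x_0\in D$ and follow the solution $x(t,\e;x_0)$ of \eqref{general-system2} with $x(0,\e;x_0)=x_0$ over one period. On each subinterval $[\theta_{j-1}(x_0),\theta_j(x_0)]$ the vector field $F^j$ is $C^k$ in $(t,x)$ and analytic in $\e$ up to the remainder, so the solution depends smoothly on $(t,x_0,\e)$ there; the only delicate points are the switching times. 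Since the $\theta_j$ are $C^{k-1}$ functions of $x$ and the crossing is transversal for $|\e|$ small (the flow is an $\e$-perturbation of the linear center, which crosses $\Sigma$ transversally away from the origin), the actual crossing times $t_j(\e;x_0)$ are determined implicitly and inherit $C^{k-1}$ dependence on $(\e,x_0)$. Composing the local flows across the switching manifold, one gets that the full period map $x_0\mapsto x(T,\e;x_0)$ is $C^{k-1}$ in $x_0$ and admits a Taylor expansion in $\e$ of order $k$ with $C^{k-1}$ coefficients. Define the displacement $\Delta(x_0,\e)=x(T,\e;x_0)-x_0$; its zeros are exactly the initial conditions of $T$-periodic solutions.

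The core of the argument is to identify the $\e$-Taylor coefficients of $\Delta$ with the recursively-defined functions $z_i^N(T,x)/i!$. This is the content that Theorem \ref{thm:melnikov} shares with \cite{BBLN19}, extended to arbitrary order, and it is exactly what I would defer to the Appendix: one writes $x(t,\e;x_0)=x_0+\sum_{i=1}^k \e^i y_i(t,x_0)+\CO(\e^{k+1})$ on each continuity zone, plugs this into \eqref{general-system2}, and matches powers of $\e$ using the Faà di Bruno combinatorics encoded by the index sets $S_a$, $S_{q,a}$ and the multipliers $1/(b_1!\,b_2!\,2!^{b_2}\cdots)$. The matching of the trajectory across the switching time $\theta_j$ produces the correction terms involving $\delta_i^j$, $A_j^p$, $\alpha_j^q$, and $w_i^j$: these account for the discrepancy between evaluating the pre-crossing expansion at the nominal time $\theta_j(x)$ versus the $\e$-perturbed true crossing time, again via a Faà di Bruno expansion of $\theta_j$ composed with the perturbed solution. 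The upshot is $\Delta(x_0,\e)=\sum_{i=1}^k \e^i M_i(x_0)+\CO(\e^{k+1})$ with $M_i(x)=z_i^N(T,x)/i!$ as defined in \eqref{ztilde}.

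Granting this identification, the conclusion is routine. By hypothesis $M_i\equiv 0$ for $i<\ell$, so $\Delta(x_0,\e)=\e^{\ell}\big(M_{\ell}(x_0)+\CO(\e)\big)$, and for $\e\neq 0$ the periodicity condition is equivalent to $\widetilde{\Delta}(x_0,\e):=M_{\ell}(x_0)+\e\,g(x_0,\e)=0$ for a $C^{k-1-\ell}$ (in particular continuous, and $C^1$ as soon as $k>\ell+1$; for the borderline case one argues with $C^0$ dependence plus the nondegeneracy, or simply takes $k$ large enough as the theorem is applied with $k=6$ and low $\ell$) function $g$. Since $\widetilde{\Delta}(a^*,0)=M_{\ell}(a^*)=0$ and $D_{x_0}\widetilde{\Delta}(a^*,0)=DM_{\ell}(a^*)$ is invertible by assumption, the Implicit Function Theorem yields a unique branch $x_0=x_0(\e)$ with $x_0(0)=a^*$, solving $\widetilde{\Delta}=0$ for $|\e|$ small. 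This $x_0(\e)$ is the initial condition of the desired unique $T$-periodic solution $x(t,\e)$, and $x(0,\e)=x_0(\e)\to a^*$ as $\e\to0$.

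The main obstacle is making the regularity bookkeeping at the switching manifold airtight: one must verify that the perturbed crossing times are well-defined (transversality uniform in small $\e$ on compact subsets of $D$), that the composition of zone flows loses no more than one order of differentiability relative to the $\theta_j$, and — the genuinely laborious part — that the formal $\e$-matching across $\Sigma$ reproduces precisely the correction terms in \eqref{ztilde}–\eqref{omegaj}. This combinatorial verification is what the Appendix proof will carry out in detail.
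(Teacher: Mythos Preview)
Your proposal is correct and follows essentially the same route as the paper: define the displacement $\Delta(x,\e)=\varphi(T,x,\e)-x$, expand it as $\sum_i \e^i M_i(x)+\CO(\e^{k+1})$ with $M_i=z_i^N(T,\cdot)/i!$, divide by $\e^{\ell}$, and apply the Implicit Function Theorem at $(a^*,0)$; the paper likewise defers the combinatorial identification of the $z_i^j$ with \eqref{ztilde}--\eqref{omegaj} to a separate proposition proved via Fa\`a di Bruno. One small slip: your transversality remark invokes the linear center, which belongs to the application \eqref{general-system1}, whereas Theorem~\ref{thm:melnikov} concerns the abstract system \eqref{general-system2} with unperturbed flow $\dot x=0$; there the perturbed crossing times $\alpha_j(x,\e)$ are obtained directly from the implicit relation $\alpha_j=\theta_j(\varphi_{j-1}(\alpha_j,x,\e))$, whose $\alpha_j$-derivative at $\e=0$ equals $1$, so no geometric transversality hypothesis is needed.
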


Theorem \ref{thm:melnikov} generalizes  the results of \cite{BBLN19} and it is proven in the Appendix. Indeed,
applying the recurrence above for $i=1,2$ we get the expressions for $M_1$ and $M_2$ obtained in  \cite{BBLN19}, namely
\begin{equation}\label{d1}
\begin{array}{rcl}
M_1(x)&=&\displaystyle\int_0^TF_1(s,x) ds,\vspace{0.3cm}\\
M_2(x)&=&\displaystyle\int_0^T\left[D_xF_1(s,x)\int_0^sF_1(t,x)dt+F_2(s,x) \right] ds\vspace{0.2cm}\\
& &\displaystyle+\sum_{j=1}^{N}\left( F_1^{j-1}\left(\theta_j(x),x\right)-F_1^{j}\left(\theta_j(x),x\right)\right)  \alpha_j^1(x).
\end{array}
\end{equation}

\section{Chebyshev systems}\label{sec:chebyshev}

Let $\mathcal{F}=\left[u_{0}, \ldots, u_{n}\right]$ be an ordered set of smooth functions defined on the closed interval $[a, b]$ and let \(\operatorname{Span}(\mathcal{F})\) be the set of all linear combinations of elements of $\mathcal{F}$.
The maximum number of zeros, counting multiplicity, that any nontrivial
function in \( \operatorname{Span}(\mathcal{F})\) can have will be denoted by \(\mathcal{Z}(\mathcal{F})\). A classical tool to study $\mathcal{Z}(\CF)$ is the Theory of Chebyshev systems. The set $\CF$ is said to be an {\it Extended Chebyshev} system or just {\it ET-system} on $[a,b]$ if $\mathcal{Z}(\CF)\leq n$ (see \cite{KASTU1966}). 
If the functions in $\mathcal{F}$ are linearly independent, it is always possible to find an element in $\Span(\CF)$ with $n$ zeros (see \cite{LliSwi2011}), in this case $\mathcal{Z}(\CF)=n.$ When $\mathcal{Z}(\CF)= n+k,$ the set $\CF$ is called an ET-system with {\it accuracy }$k$ on $[a,b],$ (see \cite{NOTOR2017}).

Recall that the Wronskian of the ordered set $[u_0, \ldots, u_{s}]$, of $s+1$  functions, is defined as
\begin{equation*}\label{wrons}
W(x)= W(u_0,\ldots, u_s)(x)=\det(M(u_0, \ldots, u_s)(x)),
\end{equation*}
where
\[
M(u_0, \ldots, u_s)(x)=\left( \begin{array}{ccc}
u_0(x)& \ldots& u_s(x)\\
u'_0(x)& \ldots & u'_s(x)\\
\vdots& & \vdots\\
u_0^{(s)}(x) &&  u_s^{(s)}(x) 
\end{array}\right).
\]
We say that $\CF$ is an {\it Extended Complete Chebyshev} system or an ECT-system on a closed interval $[a,b]$ if and only if for any $k,$ $0\leq k\leq n,$ $[u_0,u_1,\ldots,u_k]$ is an ET-system. In order to prove that $\CF$ is an ECT-system on $[a,b]$ it is sufficient and necessary to show that $W(u_0,u_1,\ldots,u_k)(t)\neq 0$ in $[a,b]$ for $0\leq k\leq n,$ see \cite{KASTU1966}. 

\subsection{Preliminary Results} In this section, we introduce some results regarding extended Chebyshev system.

A first classical result is the following:
\begin{theorem}[\cite{KASTU1966}]\label{t1}
Let $\F=[u_0, u_1, \ldots, u_n]$ be an ECT-system on a closed interval  $[a, b]$. Then, the number of isolated zeros for every element of $\mbox{Span}(\F)$ does not exceed $n$. Moreover, for each configuration of $m \leq n$ zeros, taking into account their multiplicity, there exists $F\in \mbox{Span}(\F)$ with this configuration of zeros.
\end{theorem}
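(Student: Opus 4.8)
The plan is to prove the two assertions separately, using only the definition of an ECT-system, the Wronskian characterization recalled above, and elementary linear algebra. For the upper bound, I would take a nontrivial $F\in\Span(\F)$, write $F=\sum_{i=0}^{n}a_iu_i$, and set $k:=\max\{i:\ a_i\neq0\}$. Then $F\in\Span([u_0,\dots,u_k])$, and since $\F$ is an ECT-system the subset $[u_0,\dots,u_k]$ is an ET-system, so $\z([u_0,\dots,u_k])\leq k$; hence $F$ has at most $k\leq n$ zeros counted with multiplicity, and in particular at most $n$ isolated zeros (a nontrivial element of an ET-system cannot have a zero of infinite order, which would already contradict $\z\leq k$). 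This settles the first claim.

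For realizability, fix $m\leq n$ and a configuration of $m$ zeros, grouped into distinct nodes $a\leq\xi_1<\cdots<\xi_r\leq b$ with multiplicities $\mu_1,\dots,\mu_r$ and $\sum_s\mu_s=m$. For $s=1,\dots,r$ and $0\leq q\leq\mu_s-1$ let $L_{s,q}$ be the linear functional $g\mapsto g^{(q)}(\xi_s)$ (one-sided at the endpoints), and relabel these $m$ functionals as $L_1,\dots,L_m$. Writing $U=(u_0,\dots,u_m)$ and $L_i(U):=(L_i(u_0),\dots,L_i(u_m))$, I would consider the candidate
\[
F(x)=\det\begin{pmatrix} U(x)\\ L_1(U)\\ \vdots\\ L_m(U)\end{pmatrix}.
\]
Since only the first row depends on $x$, expanding this determinant along it yields $F=\sum_{j=0}^{m}c_ju_j\in\Span([u_0,\dots,u_m])$, and the coefficient of $u_m$ is $c_m=(-1)^{m}\Delta$, where $\Delta:=\det\big(L_i(u_j)\big)_{1\leq i\leq m,\,0\leq j\leq m-1}$ is the confluent Wronskian of $[u_0,\dots,u_{m-1}]$ at the prescribed nodes.

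The next step is to check that this $F$ has exactly the prescribed zeros. First, $F\not\equiv0$: if $\Delta=0$ there would be $(d_0,\dots,d_{m-1})\neq0$ with $\sum_j d_jL_i(u_j)=0$ for all $i$; since $[u_0,\dots,u_{m-1}]$ is itself an ECT-system its Wronskians are nonvanishing, hence $u_0,\dots,u_{m-1}$ are linearly independent, so $g:=\sum_j d_ju_j$ would be a nontrivial element of $\Span([u_0,\dots,u_{m-1}])$ with $L_i(g)=0$ for all $i$, i.e.\ with total zero multiplicity at least $m$ among the $\xi_s$, contradicting $\z([u_0,\dots,u_{m-1}])\leq m-1$. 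Second, for each $i$ the quantity $L_i(F)=\sum_j c_jL_i(u_j)$ equals, by Laplace expansion along the first row, the determinant of the matrix obtained from the one defining $F$ upon replacing its first row by $L_i(U)$; that matrix has two equal rows, so $L_i(F)=0$. Thus $F$ has a zero of order at least $\mu_s$ at each $\xi_s$, so its total zero multiplicity is at least $m$; but $F\in\Span([u_0,\dots,u_m])$, which is an ET-system (this is where $m\leq n$ is used), so its total zero multiplicity is at most $m$. Consequently $F$ has total multiplicity exactly $m$, realized at $\xi_1,\dots,\xi_r$ with precisely the multiplicities $\mu_1,\dots,\mu_r$ and with no other zeros.

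I expect the only genuine obstacle to be the nonvanishing of the leading coefficient $\Delta$, which is the point where the ECT-hypothesis is used essentially; conceptually this is the statement that an ET-system obeys the generalized Haar condition, and I reduce it directly to the definition of $\z$ through the auxiliary function $g$. The remaining work is bookkeeping: arranging the confluent functionals $L_{s,q}$ so that the condition that $F$ has a zero of order $\mu_s$ at $\xi_s$ is faithfully encoded by $L_i(F)=0$, and keeping track of signs in the Laplace expansion. Finally, for the special case $m=n$ with simple zeros in the interior of $[a,b]$, realizability is already contained in the cited fact that a linearly independent set admits an element of $\Span(\F)$ with $n$ zeros.
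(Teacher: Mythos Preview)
The paper does not supply its own proof of this theorem: it is quoted as a classical result from \cite{KASTU1966} (Karlin--Studden) and used as a black box, so there is nothing in the paper to compare your argument against. Your proposal is correct and is essentially the standard proof one finds in the Chebyshev-systems literature: the upper bound is immediate from the definition of an ECT-system, and the realizability part is obtained via the determinant construction with confluent (Hermite) evaluation functionals, the key point being that the minor $\Delta$ is nonzero because its vanishing would produce a nontrivial element of $\Span([u_0,\dots,u_{m-1}])$ with at least $m$ zeros, contradicting the ET-property of that subsystem.

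Two minor remarks. First, your reduction ``take $k=\max\{i:a_i\neq0\}$'' for the upper bound is a slight refinement but not needed: since $\F$ itself is, by definition of ECT, an ET-system, every nontrivial element already has at most $n$ zeros counting multiplicity. Second, your parenthetical about linear independence of $u_0,\dots,u_{m-1}$ is redundant for the argument that $g\not\equiv0$: you only need $(d_0,\dots,d_{m-1})\neq0$ together with the ET-property, since a nontrivial coefficient vector giving the zero function would already contradict $\z([u_0,\dots,u_{m-1}])\leq m-1$ only if $g\not\equiv0$; but in fact nonvanishing of $W(u_0,\dots,u_{m-1})$ guarantees linear independence directly, so your remark is correct and closes that gap cleanly.
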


Next results, proven in  \cite{NOTOR2017}, extend the above theorem when some of the Wronskians vanish.

\begin{theorem}[\cite{NOTOR2017}]\label{t2}
	Let $\F=[u_0, u_1, \ldots u_n]$ be an ordered set of functions on $[a, b]$. Assume that all the Wronskians $W_i(x)$, $i\in\{0, \ldots, n-1\}$, are nonvanishing except $W_n(x)$, which has exactly one zero on $(a, b)$ and this zero is simple. Then, the number of isolated zeros for every element of $Span(\F)$ does not exceed $n+1$. Moreover, for any configuration of $m \leq n+1$ zeros there exists $F\in Span(\F)$ realizing it.
\end{theorem}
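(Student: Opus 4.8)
\textbf{Proof plan for Theorem \ref{t2}.}

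The plan is to reduce the statement to the classical ECT-case (Theorem \ref{t1}) applied on slightly smaller intervals, together with a continuity/limiting argument to recover the full interval $[a,b]$. First I would recall the standard reduction (as in \cite{KASTU1966}): if $W_0,\ldots,W_{n-1}$ are nonvanishing on $[a,b]$, then $[u_0,\ldots,u_{n-1}]$ is an ECT-system there, so every nontrivial $F\in\Span([u_0,\ldots,u_{n-1}])$ has at most $n-1$ zeros counted with multiplicity. The only obstruction to $[u_0,\ldots,u_n]$ being an ECT-system is the single simple zero of $W_n$, say at $x=c\in(a,b)$. The heart of the argument is therefore to bound the zeros of a general element $F=\sum_{i=0}^n \lambda_i u_i$ with $\lambda_n\neq0$ (the case $\lambda_n=0$ being already handled), showing it has at most $n+1$ zeros.

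Second, for the upper bound I would use a derivation-operator/division argument. Using the nonvanishing of $W_0,\ldots,W_{n-1}$, one builds on $[a,b]$ the associated sequence of functions $w_0,\ldots,w_n$ (with $w_i=W_{i-1}W_{i+1}/W_i^2$ type formulas, $w_0=u_0$, etc.) so that $F$ can be written via the classical factorization $F = w_0\,D_1 D_2\cdots D_n\big(\text{const}\big)$-type representation, where the last "ingredient" carries the information of $W_n$. Concretely, after applying $n$ successive division-and-differentiation operators to $F/w_0$, one lands on a function proportional to $W(u_0,\ldots,u_n)/W(u_0,\ldots,u_{n-1}) = W_n/(\text{nonvanishing})$, which vanishes exactly once and simply. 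Counting zeros backwards through the operators: at the last stage the function has exactly $1$ zero; each time we "undo" a division/derivative step via Rolle's theorem in reverse, the zero count can grow by at most $1$, so after $n$ steps $F$ has at most $n+1$ zeros. This is where the \emph{simple} nature of the zero of $W_n$ is used — a multiple zero, or two zeros, would allow the count to jump by more and break the bound $n+1$.

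Third, for the realization part I would argue that for any prescribed configuration of $m\le n+1$ zeros (with multiplicities), one can produce $F\in\Span(\F)$ achieving it. For $m\le n$, note that $[u_0,\ldots,u_{n-1},u_n]$ restricted to any closed subinterval $[a',b']\subset(a,b)$ avoiding $c$ is an ECT-system (all $n+1$ Wronskians nonvanishing there), so Theorem \ref{t1} gives the desired $F$ on that subinterval, and since $F$ is a fixed element of $\Span(\F)$ it has exactly that configuration on all of $[a,b]$ by the upper bound just proved (no extra zeros can appear). For $m=n+1$, one needs to force the "extra" zero, which is exactly what the vanishing of $W_n$ at $c$ permits: choosing coefficients so that the top operator-image $W_n/(\cdots)$ has its zero at $c$, then using the freedom in the lower-order coefficients to place the remaining $n$ zeros (or clustered multiplicities) freely via the ECT-structure of $[u_0,\ldots,u_{n-1}]$, and invoking a dimension/transversality count to see the system of conditions is solvable. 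A short continuity argument lets the prescribed zeros approach $c$ or the endpoints.

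The main obstacle I expect is the bookkeeping in the backward Rolle argument: making precise that passing from a function $g$ to its "anti-derivation" $G$ (so that $D G = g$ in the relevant division-operator sense) increases the zero count by at most one even when $g$ or $G$ has zeros at the endpoints or high-multiplicity zeros, and that the single simple zero of $W_n$ propagates to give \emph{exactly} $n+1$ and not more. Handling endpoint zeros and multiplicities uniformly — rather than assuming everything is interior and simple — is the delicate point; the reference \cite{NOTOR2017} presumably does this carefully, and I would follow that treatment, isolating the clean interior-simple case first and then extending by perturbation and continuity.
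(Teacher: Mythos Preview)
The paper does not prove Theorem~\ref{t2}; it is quoted verbatim from \cite{NOTOR2017} as a preliminary tool and no argument is given. So there is nothing in the paper to compare your plan against.

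That said, a brief assessment of your plan on its own merits: the upper-bound half is essentially the right strategy. With $W_0,\ldots,W_{n-1}$ nonvanishing one builds the usual chain of division--differentiation operators, and the last function in the chain is (up to a nonvanishing factor) $W_n/W_{n-1}$, which has exactly one simple zero; Rolle run backwards $n$ times then gives at most $n+1$ zeros for any $F\in\Span(\mathcal F)$ with $\lambda_n\neq0$.

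The realization half, however, has a genuine gap. For $m\le n$ you propose restricting to a subinterval $[a',b']$ that avoids $c$ and invoking the ECT property there. This fails for two reasons: (i) a prescribed configuration may have zeros on both sides of $c$, so no single such subinterval contains it; (ii) even when the configuration lies in $[a',b']$, the $F$ you construct there is only guaranteed to have \emph{at most} $n+1$ zeros on $[a,b]$, not exactly $m$ --- extra zeros may appear in $[a,b]\setminus[a',b']$, so you have not realized the configuration on $[a,b]$. For $m=n+1$ the issue is sharper: you suggest the ``extra'' zero is pinned at $c$, but the theorem asserts that \emph{any} configuration of $n+1$ zeros can be realized, with the zeros placed anywhere in $[a,b]$. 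The mechanism is not that one zero sits at $c$; rather, the sign change of $W_n$ at $c$ makes the last function in the chain change sign once, and this is what gives the extra degree of freedom globally. A correct realization argument (as in \cite{NOTOR2017}) typically constructs the desired $F$ by working with the reduced system and integrating back, or by a perturbation/continuity argument that moves the zero of $W_n$ rather than restricting the domain.
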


\begin{theorem}[\cite{NOTOR2017}]\label{t3} 
 Let  $\mathcal{F}=\left[u_{0}, u_{1}, \ldots, u_{n}\right]$   be an ordered set of analytic functions in $[a, b].$  Assume that all the $\nu_{i}$  zeros of the Wronskian  $W_{i}$   are simple for $i\in\{0, \ldots, n\}.$   Then the number of isolated zeros for every element  of  $\operatorname{Span}(\mathcal{F})$  does not exceed 
\begin{equation}\label{upper}
n+\nu_{n}+\nu_{n-1}+2\left(\nu_{n-2}+\cdots+\nu_{0}\right)+\mu_{n-1}+\cdots+\mu_{3}
\end{equation}
where $\mu_i=\min(2 \nu_i, \nu_{i-3}+\ldots+\nu_0)$, for $i\in\{3, \ldots, n-1\}.$
\end{theorem}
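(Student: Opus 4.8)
Write $Z(g)$ for the number of isolated zeros, counted with multiplicity, that a function $g$ has on $(a,b)$; analyticity makes this finite, and the claim is the bound \eqref{upper} on $\z(\F)$. The plan is to induct on $n$ via the classical \emph{derivation--division} reduction, keeping everything analytic by clearing denominators. Fix a nontrivial $f=\sum_{i=0}^n a_iu_i\in\Span(\F)$ and set $\hat g_i=W(u_0,u_i)=u_0u_i'-u_iu_0'$ for $i\in\{1,\dots,n\}$, so that $\hat f:=W(u_0,f)=u_0^2(f/u_0)'=\sum_{i=1}^n a_i\hat g_i$ lies in $\Span(\widehat\F)$ with $\widehat\F=[\hat g_1,\dots,\hat g_n]$. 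The cornerstone is the Wronskian identity $W(\hat g_1,\dots,\hat g_k)=u_0^{\,k-1}W(u_0,u_1,\dots,u_k)=u_0^{\,k-1}W_k$. Reindexing $\widehat\F$ so that its $j$-th Wronskian uses $\hat g_1,\dots,\hat g_{j+1}$, this reads $\widehat W_j=u_0^{\,j}\,W_{j+1}$ for $j\in\{0,\dots,n-1\}$. Hence the reduced system is a system of $n$ functions whose Wronskian zeros are the simple zeros of $W_{j+1}$ \emph{together with} the $\nu_0$ zeros of $u_0$, now carrying multiplicity $j$.

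The crucial structural point is that these $u_0$-zeros are simple only in $\widehat W_0,\widehat W_1$ and become multiple from $\widehat W_2$ on, so the hypothesis of the theorem is not inherited and a plain induction is unavailable. I would therefore prove, by induction on $n$, a \emph{multiplicity-sensitive} generalization in which each Wronskian is allowed finitely many zeros of prescribed multiplicities, and specialize it at the end; Theorem \ref{t3} is recovered since at the top level all multiplicities are one. The inductive step rests on a transfer inequality: a Rolle argument for $f/u_0$ across its simple poles at the zeros of $u_0$ relates $Z(f)$ to the number of critical points of $f/u_0$, that is, to the zeros of $\hat f$ lying off the zero set of $u_0$, the $p+1$ subintervals between the poles accounting for the additive contribution that ultimately assembles into the summand $n$ of \eqref{upper}. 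Feeding the inductive bound for $Z(\hat f)$ into this estimate and unwinding the reduction tower is then routine.

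The delicate point, and the main obstacle, is the exact zero–multiplicity bookkeeping that produces the precise weights in \eqref{upper}: weight one for the two top Wronskian counts $\nu_n,\nu_{n-1}$, weight two for $\nu_0,\dots,\nu_{n-2}$, and the truncations $\mu_i=\min(2\nu_i,\nu_{i-3}+\cdots+\nu_0)$. The weights reflect how often each $\nu_i$ is crossed in the reduction tower, the two highest surviving as the leading Wronskians of the smallest reduced systems and thus counted once, while the rest reappear twice through the multiplicity-$j$ factors $u_0^{\,j}$. The truncations arise because the multiple zeros that $u_0^{\,j}$ contributes to $\widehat W_j$ cannot each force a genuine oscillation of elements of $\Span(\widehat\F)$: the number of such realizable oscillations is capped by the supply coming from the lower Wronskians, which is exactly $\nu_{i-3}+\cdots+\nu_0$, whereas the naive Rolle count only offers $2\nu_i$. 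Making this cap rigorous—encoding the sign pattern forced at each simple pole of $f/u_0$ and verifying that the inductively generated corrections $\min(2\nu_{i+1},\nu_{i-2}+\cdots+\nu_1)$ together with the freshly created term $\min(2\nu_3,\nu_0)$ reassemble into $\mu_{n-1}+\cdots+\mu_3$—is the technical heart of the argument.

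Finally I would dispose of the degenerate configurations by reduction to the induction hypothesis: if $a_n=0$ or $a_0=0$ then $f$ belongs to the span of a proper subsystem and a system with fewer functions suffices; coincidences between the zeros of $u_0$ and those of the higher Wronskians are excluded by analyticity together with a limiting argument; and the base cases $n=0,1$ are checked directly, where \eqref{upper} reduces to $\nu_0$ and to $1+\nu_0+\nu_1$, respectively. Throughout, analyticity guarantees finiteness of all zero sets and that the prescribed multiplicities are correctly transported to $\widehat\F$ off the removed points, so the generalized induction is well posed.
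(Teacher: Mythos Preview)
The paper does not prove Theorem~\ref{t3}: it is quoted verbatim from \cite{NOTOR2017} as a preliminary result and used as a black box (together with Remark~\ref{remark}) in the proofs of Propositions~\ref{pro3}--\ref{pro5}. There is therefore no ``paper's own proof'' to compare against.

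As for your proposal itself, the derivation--division reduction via $\hat g_i=W(u_0,u_i)$ and the identity $W(\hat g_1,\dots,\hat g_k)=u_0^{\,k-1}W_k$ is indeed the framework used in \cite{NOTOR2017}, and your diagnosis that the simple-zero hypothesis is not inherited by $\widehat\F$ (so a straight induction fails) is correct. However, what you have written is a plan rather than a proof: the ``multiplicity-sensitive generalization'' you say you would prove is never formulated, and the combinatorial claim that the inductive corrections reassemble exactly into $\mu_{n-1}+\cdots+\mu_3$ with $\mu_i=\min(2\nu_i,\nu_{i-3}+\cdots+\nu_0)$ is asserted but not verified. This bookkeeping is precisely the nontrivial content of the result---getting the specific shape $\min(2\nu_i,\nu_{i-3}+\cdots+\nu_0)$ rather than a cruder bound requires a careful case analysis in the original paper---so leaving it as ``routine'' or ``the technical heart'' is a genuine gap. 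If you want a complete argument you should either reproduce the induction from \cite{NOTOR2017} in full or, as the present paper does, simply cite it.
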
 

\begin{remark}\label{remark}
In Theorem \ref{t3}, we are assuming that all the zeros of the Wronskians $W_{i},$ $i\in\{0, \ldots, n\},$ are simple. This condition can be dropped as follows:

\medskip

Assume that, for each $i\in\{0, \ldots, n\},$ the Wronskian  $W_{i}$ has $\nu_{i}$ zeros counting multiplicity. Then, the number of simple zeros for every element  of  $\operatorname{Span}(\mathcal{F})$  does not exceed \eqref{upper}. 

\medskip

Indeed, if there exists an element 
$f=\sum_{i=0}^n a_i u_i\in\operatorname{Span}(\mathcal{F})$ 
for which the number of simple zeros exceeds \eqref{upper}, then by perturbing the functions $u_i,$ let us say $u_i^{\e},$ for $i\in\{0, \ldots, n\},$ the function
$f_{\e}=\sum_{i=0}^n a_i u^{\e}_i$ 
would still exceed \eqref{upper}, because we are assuming that the zeros of $f$ are simple. In addition, such a perturbation can be chosen in such a way that each Wronskian $W^{\e}_{i}$  of ordered set of functions $\left[u_{0}^{\e}, u_{1}^{\e}, \ldots, u_{i}^{\e}\right],$ for $i\in\{0, \ldots, n\},$ has less than or exactly $\nu_{i}$  zeros, all of them simple. This contradicts Theorem \ref{t3}.
\end{remark}

\subsection{New families of ET-systems with accuracy}

In what follows, for $k\in\mathbb{Z_+}$ and $\lambda \in \R,$ we consider the functions $u_1^k,\ldots,u_{23}^k$, and $u_{24}^{k,\lambda}$ defined on $(0,\infty)$ as

\[
\begin{array}{ll}
u_{1}^k(x)=1, &  u_2^k(x)=x,\\
  u_3^k(x)= x^{2 k-2},& u_4^k(x)=x^{2k},\\
  u_5^k(x)= x^{2k+1},&  u_6^k(x)= x^{4 k-2},\\
 u_7^k(x)=x^{4 k},&u_8^k(x)=x^{4 k+1},\\
  u_9^k(x)=  x^{6 k-2},&u_{10}^k(x)= x^{6 k},\\
 u_{11}^k(x)=x^{6 k+1}, & u_{12}^k(x)=x(1+ x^{4k}),\\
 u_{13}^k(x)=x^{4k}+x^2,&u_{14}^k(x)=x+(2 k+1) x^{8 k+1}\\
   u_{15}^k(x)= \left(x^{4 k}+x^2\right) \tan ^{-1}\left(x^{2 k-1}\right),&  u_{16}^k(x)= \left(x^{4 k-2}+1\right) \left(2 k x^{4 k-1}+x\right), \\
 u_{17}^k(x)= \left(x^{4 k-2}+1\right) \left(2 k x^{4 k-1}+x\right) \tan ^{-1}\left(x^{2 k-1}\right),&u_{18}^k(x)=x^{1/k} \left((2 k+1) x^2+1\right)^3, \\
 u_{19}^k(x)=-x^{1/k} \left((2 k+1) x^3+x\right)^2, &u_{20}^k(x)=-x^{\frac{1}{k}+3} \left((2 k+1) x^2+1\right)^2, \\ u_{21}^k(x)=x^{\frac{3}{2 k}+1} \left((2 k+1) x^2+1\right)^3,&
u_{22}^k(x)=x^{\frac{1}{k}+1} \left((2 k+1) x^2+1\right)^3, \\
  u_{23}^k(x)=\left(x^2+1\right) x^{\frac{3}{2 k}} \left((2 k+1) x^2+1\right)^3, \end{array}
\]
and
\[
\begin{array}{rl}
u_{24}^{k, \lambda}(x)=&x^5 \lambda^3(2 k  +1)^3+x^2 \left(3 \left(8 k^2+6 k+1\right) \lambda ^2+1\right)+\lambda  x \left(-4 k^2 \lambda ^2-2 k \left(\lambda ^2-3\right)+3\right)\\
& +1+(2 k+1)( \lambda  x^3 \left((4 k^2 +1)\lambda ^2+k \left(4 \lambda ^2-6\right)+3\right)+ x^4 \left(3 \lambda ^2+k \left(6 \lambda ^2+2\right)\right)).\\
\end{array}
\]

We define on $(0, \infty)$ the  ordered set of functions 
\[
\begin{array}{l}
\F_1^k=[u_{1}^k, u_{12}^k, u_4^k],\\
\F_2^k=[u_{13}^k,u_{15}^k, u_5^k, u_2^k],\\
\F_3^k=[u_{1}^k, u_4^k, u_9^k,  u_{16}^k, u_{17}^k],\\
\F^k_4=[ u_4^k, u_9^k, u_6^k, u_3^k, u_{16}^k, u_{17}^k],\\
\F^k_5=[u_{1}^k, u_4^k, u_7^k, u_8^k, u_{10}^k, u_5^k, u_{11}^k, u_{14}^k],\\
\F^k_6=[u_{1}^k, u_4^k, u_9^k, u_6^k, u_3^k, u_{16}^k, u_{17}^k]  \text{ and}\\
\F^{k, \lambda}_7=[u_{18}^k, u_{19}^k, u_{20}^k, u_{21}^k, u_{22}^k, u_{23}^k, u_{24}^{k, \lambda}]. 
\end{array}
\]

\begin{proposition} \label{pro1}  The sets of functions $\F_2^1,$ $\F_3^1,$ $\F_4^2,$ and $\F_5^k,$ for $k\geq 1$ are ECT-systems on $[a,b]$, for any $0<a<b$.
	\end{proposition}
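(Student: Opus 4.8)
The plan is to verify, for each of the ordered sets $\F_2^1,$ $\F_3^1,$ $\F_4^2,$ and $\F_5^k$ ($k\geq1$), that the set is an ECT-system on $[a,b]$ for every $0<a<b$ by means of the Wronskian criterion recalled after Theorem \ref{t1}: it suffices to show that, writing $\F=[v_0,v_1,\ldots,v_n]$, each of the Wronskians $W(v_0),W(v_0,v_1),\ldots,W(v_0,\ldots,v_n)$ is nonvanishing on $(0,\infty)$. Since $a,b$ are arbitrary, proving nonvanishing on all of $(0,\infty)$ is both necessary and sufficient for the claim as stated.

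First I would treat the three ``numerical'' cases $\F_2^1=[u_{13}^1,u_{15}^1,u_5^1,u_2^1]$, $\F_3^1=[u_1^1,u_4^1,u_9^1,u_{16}^1,u_{17}^1]$, and $\F_4^2=[u_4^2,u_9^2,u_6^2,u_3^2,u_{16}^2,u_{17}^2]$ by substituting $k=1$ (resp.\ $k=2$) into the explicit formulas for the $u_i^k$. For $\F_2^1$ this gives $[x^4+x^2,(x^4+x^2)\arctan x,x^3,x]$, and the successive Wronskians reduce, after factoring out obvious positive monomial factors, to expressions that are either strictly positive or whose sign on $(0,\infty)$ can be read off directly; the presence of $\arctan x$ means the second and third Wronskians will involve $1/(1+x^2)$-type factors, but no cancellation of sign. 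The cases $\F_3^1$ and $\F_4^2$ similarly produce Wronskians that, after pulling out positive powers of $x$ and positive factors like $(1+x^2)$ and $(1+x^2)^2$, become polynomials with positive coefficients (or constants), hence nonvanishing on $(0,\infty)$. These are finite, mechanical determinant computations; I would organize them so that at each stage the common factors are extracted before differentiating further, keeping the determinants small.

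The main work, and the main obstacle, is the family $\F_5^k=[u_1^k,u_4^k,u_7^k,u_8^k,u_{10}^k,u_5^k,u_{11}^k,u_{14}^k]$, which has eight functions depending on the parameter $k\geq1$, so one must compute an $8\times8$ Wronskian (and all the smaller ones) symbolically in $k$ and in $x$. Here I would exploit structure rather than brute force. All but the last function $u_{14}^k=x+(2k+1)x^{8k+1}$ are monomials $x^{e_j}$ with exponents $e=(0,\,2k,\,4k,\,4k+1,\,6k,\,2k+1,\,6k+1)$; for a list of monomials $[x^{e_0},\ldots,x^{e_m}]$ the Wronskian is $\left(\prod x^{e_j}\right)\cdot x^{-\binom{m+1}{2}}\cdot V(e_0,\ldots,e_m)$ up to a nonzero combinatorial constant, where $V$ is a generalized Vandermonde (the determinant of $\big(e_j^{\,(r)}\big)$ with falling factorials), which is nonzero precisely when the exponents are pairwise distinct. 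So the first seven Wronskians are handled by checking that the relevant exponents are distinct for all $k\geq1$, with the one subtlety that for small $k$ some exponents coincide (e.g.\ $2k=4k$ only if $k=0$, $2k+1=4k$ only if $k=1/2$, etc.)---I would list these coincidences and confirm none occur for integer $k\geq1$; a couple do require $k\geq1$ rather than $k\geq0$, which is presumably why the hypothesis is stated that way. For the full $8\times8$ Wronskian I would expand along the last column, writing $u_{14}^k=x+(2k+1)x^{8k+1}$ and using linearity of the determinant in that column: the contribution of $x$ vanishes because $x=u_2^k$-type term is dependent after differentiation... more precisely, I would instead reduce to $W(u_1^k,\ldots,u_{11}^k\text{-block},x)$ and $W(\ldots,x^{8k+1})$, each again a generalized Vandermonde, and show the resulting sum has a fixed sign by factoring out the positive monomial and positivity of the Vandermonde cofactors together with $2k+1>0$. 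The delicate point is ensuring the two Vandermonde pieces do not cancel; I expect them to add with the same sign because both $1$ and $8k+1$ are larger/smaller than the other exponents in a consistent way, but pinning this down cleanly for all $k$ is the crux of the argument.

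Finally I would record the conclusion: in each of the four cases all consecutive Wronskians are nonvanishing on $(0,\infty)$, hence on any $[a,b]\subset(0,\infty)$, so by the Wronskian characterization of ECT-systems (stated in the paragraph preceding Theorem \ref{t1}, following \cite{KASTU1966}) each of $\F_2^1,\F_3^1,\F_4^2,\F_5^k$ is an ECT-system on $[a,b]$, which proves Proposition \ref{pro1}.
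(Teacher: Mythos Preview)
Your overall strategy---verify the Wronskian criterion on $(0,\infty)$---is exactly what the paper does. The paper simply computes every Wronskian explicitly (four for $\F_2^1$, five for $\F_3^1$, six for $\F_4^2$, eight for $\F_5^k$) and inspects each. There are, however, a few genuine inaccuracies in your execution that you should fix.

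First, for the numerical cases your claim that after factoring the Wronskians ``become polynomials with positive coefficients'' is false. For $\F_3^1$ one gets $W_3(x)=48x(1-3x^2+10x^4)$, and for $\F_4^2$ one gets $W_4(x)=27648\,x^{13}(924x^{12}-25x^6+15)$; both contain a negative coefficient. They are still nonvanishing on $(0,\infty)$, but because the relevant quadratic (in $x^2$, resp.\ $x^6$) has negative discriminant, not because of positivity of coefficients. So you need a discriminant check, exactly as the paper implicitly relies on.

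Second, for $\F_5^k$ your aside that ``the contribution of $x$ vanishes because $x=u_2^k$-type term is dependent after differentiation'' is wrong: for integer $k\geq1$ the exponent $1$ is not among $\{0,2k,4k,4k+1,6k,2k+1,6k+1\}$, so $W(v_0,\ldots,v_6,x)\neq0$. Your corrected plan---use linearity in the last column to write $W_7$ as a sum of two monomial Wronskians, each a generalized Vandermonde times a power of $x$---is sound and gives
\[
W_7(x)=x^{24(k-1)}\bigl(A_k+(2k+1)B_k\,x^{8k}\bigr),
\]
with $A_k,B_k$ the two Vandermonde-type constants. But you stop at ``I expect them to add with the same sign,'' which is precisely the nontrivial point. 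The paper's explicit computation shows that, up to a common nonzero factor, the bracket equals $48k^3-44k^2+12k-1+(2k+1)^2(4k+1)(6k+1)(8k+1)x^{8k}$; one then checks $48k^3-44k^2+12k-1>0$ for every integer $k\geq1$ (value $15$ at $k=1$, increasing thereafter), so both summands are positive. Without this verification your argument is incomplete. Your Vandermonde shortcut for the first seven Wronskians of $\F_5^k$ is a nice economy over the paper's brute-force listing, but the last Wronskian still requires an honest sign analysis.
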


\begin{proof}

It is enough to show that the Wronskians defined by $\F_2^1,$ $\F_3^1,$ $\F_4^2,$ and $\F_5^k,$ $k\geq 1$, do not vanish in $(0, \infty)$, which, by definition, implies that all of these sets are ECT-systems.
	
\bigskip
	
The Wronskians of the family $\F_2^1$ are provided by
	\[
	\begin{array}{rcl}
	W_0(x) & = & x^2+x^{4},\\
	W_1(x) & = & x^2 \left(x^4+x^2\right),\\
	W_2(x) & = &-\dfrac{4 x^9}{x^4+x^2}, \\
	W_3(x) & = & \dfrac{32 x^9}{(x^2 + x^4)^2},
	\end{array}
	\]	
	which, clearly, do not vanish in $(0, \infty)$.  
	
	\bigskip
	
The Wronskians of $\F_3^1$  are provided by
	\[
	\begin{array}{rcl}
	W_0(x)&=&1,\\
	W_1(x)&=& 2 x,\\
	W_2(x)&=& 16 x^{3},\\
	W_3(x)&=&48x(1 - 3 x^2 + 10 x^4),\\
	W_4(x)&=&\dfrac{1536 x^{3} (9 + 2 x^2)}{(1 + x^2)^3},
	\end{array}
	\]
which, clearly, do not vanish in $(0, \infty)$.  

\bigskip

The Wronskians of the family $\F_4^2$ are provided by
	\[
	\begin{array}{rcl}
	W_0(x)&=&x^{4},\\
	W_1(x)&=& 6 x^{13},\\
	W_2(x)&=& -48 x^{17},\\
	W_3(x)&=&3072x^{16},\\
	W_4(x)&=&27648 x^{13} \left(924 x^{12}-25 x^6+15\right),\\
	W_5(x)&=& \dfrac{47775744 x^{24} \left(2464 x^{18}+42156 x^{12}+3975 x^6+3325\right)}{\left(x^6+1\right)^4}.
	\end{array}
	\]
which, clearly, do not vanish in $(0, \infty)$.  

\bigskip

The Wronskians of $\F_5^k$ are
	\[
	\begin{array}{rcl}
	W_0(x)&= &1,\\
	W_1(x)&= &2 k x^{ 2 k-1},\\
	W_2(x)&= &16 k^3 x^{6 k-3},\\
	W_3(x)&= &16 k^3 \left(8 k^2+6 k+1\right) x^{10 k-5},\\
	W_4(x)&= &768 k^6 (2 k-1) \left(8 k^2+6 k+1\right) x^{16 k-9},\\
	W_5(x)&= &-1536 k^7 \left(1-4 k^2\right)^2 \left(16 k^2-1\right) x^{18 k-13},\\
	W_6(x)&= &-12288 k^9 (2 k+1)^3 (4 k-1) (6 k+1) \left(-8 k^2+2 k+1\right)^2 x^{24 k-18},\\
	W_7(x)&= &-589824 k^{12} (2 k+1)^3 (4 k-1) (6 k+1) \left(-8 k^2+2 k+1\right)^2 x^{24 (k-1)} \\
& &	\left(48 k^3-44 k^2+12 k-1+(2 k+1)^2 (4 k+1) (6 k+1) (8 k+1) x^{8 k}\right).
	\end{array}
	\]	
	It can easily be seen that, for $k\in\mathbb{Z}_+$, the Wronskians do not vanish in $(0, \infty).$
	
	This ends the proof of Proposition \ref{pro1}.
	\end{proof}

	\begin{proposition} \label{pro2} 
	The sets of functions $\F_1^k,$ for $k\geq 1,$  $\F_2^k,$  for $k\geq2$,  $\F_4^k$, for $k>2$, and $\F_6^2$ are  ET-system with accuracy $1$ on $[a,b]$, for any $0<a<b$.
	\end{proposition}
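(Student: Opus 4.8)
# Proof Plan for Proposition \ref{pro2}

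The plan is to follow the same strategy used for Proposition \ref{pro1}, but now accounting for the fact that these families are \emph{not} ECT-systems: exactly one of the Wronskians will vanish (at a single simple point), which is precisely the hypothesis of Theorem \ref{t2}. Concretely, for each of the four families $\F_1^k$ ($k\geq 1$), $\F_2^k$ ($k\geq 2$), $\F_4^k$ ($k>2$), and $\F_6^2$, I would compute all the Wronskians $W_0, W_1, \dots, W_n$ of the ordered set (where $n+1$ is the number of functions), exhibit them in closed form, and verify two things: first, that $W_0, \dots, W_{n-1}$ are nonvanishing on $(0,\infty)$; second, that the top Wronskian $W_n$ has exactly one zero on $(0,\infty)$ and that this zero is simple. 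By Theorem \ref{t2}, this establishes that each family is an ET-system with accuracy $1$ on any $[a,b]\subset(0,\infty)$ (since accuracy $1$ means $\mathcal{Z}(\F)=n+1$, and Theorem \ref{t2} gives exactly the upper bound $n+1$ together with realizability of every configuration with $\leq n+1$ zeros).

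First I would handle the two families $\F_1^k$ and $\F_2^k$ with a general parameter $k$. For $\F_1^k=[u_1^k,u_{12}^k,u_4^k]=[1,\,x(1+x^{4k}),\,x^{2k}]$ this is a $3\times 3$ computation: $W_0=1$ and $W_1$ should reduce to a monomial times a nonvanishing factor, while $W_2$ — the Wronskian of all three — will be, after simplification, a monomial times a polynomial in $x^{4k}$ (or similar) that I expect to have a single simple positive root. Similarly for $\F_2^k=[u_{13}^k,u_{15}^k,u_5^k,u_2^k]$, which is $4\times4$ and involves $\tan^{-1}(x^{2k-1})$; here the lower Wronskians $W_0,W_1,W_2$ should match (after setting $k=1$) the nonvanishing expressions already displayed for $\F_2^1$ in Proposition \ref{pro1}, with the arctangent contributions canceling in $W_0,\dots,W_2$ and surviving only in the top Wronskian $W_3$ in a way that forces exactly one simple zero when $k\geq 2$. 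The key structural point is that for $k=1$ the family is an ECT-system (proven already), so the single zero of the top Wronskian must \emph{appear only for $k\geq 2$}; I would make this transition explicit by writing $W_n$ as an explicit function of $k$ and $x$ and analyzing its sign.

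For $\F_4^k$ with $k>2$ and for $\F_6^2$, the computations are larger ($6$ and $7$ functions respectively), but the method is identical: display all Wronskians, confirm $W_0,\dots,W_{n-1}$ are products of a power of $x$ with strictly positive polynomial/rational factors on $(0,\infty)$, and then isolate the top Wronskian $W_n$. For $\F_4^k$, the relevant expressions should be a $k$-parametrized version of the $\F_4^2$ Wronskians in Proposition \ref{pro1} (note $\F_4^2$ is an ECT-system, so again the single simple zero of $W_n$ must materialize only for $k>2$); for $\F_6^2=[u_1^2,u_4^2,u_9^2,u_6^2,u_3^2,u_{16}^2,u_{17}^2]$ it is a concrete numerical computation. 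In each case, to prove that $W_n$ has exactly one simple zero on $(0,\infty)$, I would reduce it (dividing out the monomial factor and any obvious nonvanishing factor) to a function of the form $c_1 - c_2 x^{m}$ with $c_1,c_2>0$, or more generally to $P(x^m)$ for a polynomial $P$ having a unique sign change on $(0,\infty)$ — monotonicity or a single descending coefficient pattern then gives uniqueness, and nonvanishing of the derivative at that root gives simplicity.

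The main obstacle I anticipate is \emph{not} conceptual but computational and bookkeeping-heavy: correctly evaluating the $k$-parametrized Wronskians (especially the ones involving $\tan^{-1}(x^{2k-1})$ in $\F_2^k$ and the composite powers $x^{1/k}$, $x^{3/(2k)}$ that appear implicitly through $u_{16}^k,u_{17}^k$), simplifying them to a manageable closed form, and then rigorously proving the uniqueness and simplicity of the single positive root of the top Wronskian in each family — uniformly in the parameter $k$ over the stated range. The delicate feature is the boundary of the parameter range ($k=1$ for $\F_1^k$ would need checking of whether accuracy is still $1$ or the system degenerates; $k=1$ for $\F_2^k$ and $k=2$ for $\F_4^k$ are exactly the ECT cases and must be \emph{excluded} here), so I would pay particular attention to how the zero of $W_n$ enters or leaves $(0,\infty)$ as $k$ crosses these thresholds.
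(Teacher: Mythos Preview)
Your plan is correct and matches the paper's own proof almost exactly: for each of the four families the paper computes all the Wronskians explicitly, verifies that $W_0,\dots,W_{n-1}$ are nonvanishing on $(0,\infty)$, and shows that the top Wronskian $W_n$ has exactly one simple positive zero, then invokes Theorem~\ref{t2}. Two small corrections to your expectations: for $\F_2^k$ the arctangent in fact cancels out of \emph{all} the Wronskians (including $W_3$), so the top Wronskian is a rational function whose sign is governed by a linear expression in $x^{4k-2}$; and $u_{16}^k,u_{17}^k$ involve only integer powers of $x$, not the fractional powers $x^{1/k},x^{3/(2k)}$ (those appear in $u_{18}^k,\dots,u_{23}^k$). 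For $\F_4^k$ ($k>2$) and $\F_6^2$ the relevant factor of $W_n$ is a cubic (resp.\ quartic) in $x^{4k-2}$ (resp.\ $x^6$), and the paper pins down the unique simple positive root via the sign of the discriminant together with an intermediate-value argument, which is a cleaner route than trying to exhibit monotonicity.
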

\begin{proof}

For each set $\F_1^k,$ for $k\geq 1,$ and $\F_2^k,$ $\F_4^k,$ and $\F_6^k$, for $k\geq2$,  we will show that all their Wronskians are nonvanishing  except the last, which has exactly one simple zero in $(0, \infty)$. Thus, from Theorem \ref{t2},  we will have that all of these sets are ET-systems with accuracy 1.

\bigskip

The Wronskians of the family $\F_1^k$ are provided by
	\[
		\begin{array}{rcl}
			W_0(x)&=&1,\\
			W_1(x)&=& (4 k+1) x^{4 k}+1,\\
			W_2(x)&=& 2 k x^{2 (k-1)} ( - (1 + 6 k + 8 k^2) x^{4 k} + 2 k-1).
		\end{array}
	\]
	It can easily be seen that, for $k\in\mathbb{Z}_+$,  the Wronskians $W_0(x)$ and $W_1(x)$ do not vanish in $\R$ and $W_2(x)$ has exactly one positive zero, which is simple. 

\bigskip

The Wronskians of the family $\F_2^k$ are provided by
	\[
	\begin{array}{rcl}
	W_0(x) & = & x^2+x^{4k},\\
	W_1(x) & = & (2 k-1)(x^{2k+2}+x^{6k}),\\
	W_2(x) & = & -\dfrac{4(2k-1)^3x^{8k+1}}{x^{2 }+x^{4 k}},\\ 
	W_3(x) & = &-\dfrac{16 k (2 k-1)^3 x^{8 k-3} \left((k-1) (4 k-1) x^{4 k-2}+1-3 k \right)}{\left(x^{4 k-2}+1\right)^2}.
	\end{array}
	\]	
Again, It can easily be seen that, for $k\in\mathbb{Z}_+$ such that $k\geq 2$,  the Wronskians $W_0(x), W_1(x), W_2(x)$ do not vanish in $(0, \infty)$ and $W_3(x)$ has a unique positive zero, which is simple. 

\bigskip

The Wronskians of the family $\F_4^k$, for $k>2$, are provided by
	\[
		\begin{array}{rcl}
			W_0(x)&=&x^{2k},\\
			W_1(x)&=& (4 k-2) x^{8 k-3},\\
			W_2(x)&=& -8 (k-1) k (2 k-1) x^{12 k-7},\\
			W_3(x)&=&128 (k-1) k^3 (2 k-1) x^{14 k-12},\\
			W_4(x)&=&128 (k-1) k^3 (2 k-1)^3 x^{14 k-15} P_{0,k}\left(x^{4 k-2}\right),\\
			W_5(x)&=& \dfrac{8192 (1-2 k)^6 (k-1) k^3 x^{24 k-16}}{\left(x^{4 k}+x^2\right)^4}P_{1,k}(x^{4k-2}).
		\end{array}
	\]
	where
	\[
	\begin{array}{ll}
	P_{0,k}(x)=&6 k (4 k-1) (6 k-1) x^2-(2 k+1)^2x+3 (2 k (4 k-9)+9),\\
		\end{array}
	\]
	and
		\[
	\begin{array}{ll}
	P_{1,k}(x)=& -4 (k-1) k (2 k-5) (3 k-2) (4 k-1) (6 k-1) x^3\\
	&+4 (k (k (k (4 k (9 k (8 k+3)-281)+949)-249)+20)-1) x^2\\
	&+(3 k-1) (4 k (k (4 k (36 k-89)+185)+5)-29) x\\
	&+(2 k-3) (3 k-1) (4 k-3) (4 k-1) (10 k-1).\\
	\end{array}
	\]
	Notice that the Wronskians $W_i(x) \neq 0$ for $i=0,1,2,3$ do not vanish in $(0, \infty)$. Next, we shall show that $W_4(x)>0$ in $(0, \infty)$ and $W_5(x)$  has one positive zero, which is simple.  By computing the discriminant of $P_{0,k}$ and $P_{1,k}$ we obtain
\[\text{Dis}(P_{0,k})=-13824 k^5+36880 k^4-29056 k^3+7800 k^2-640 k+1,\]
and
\[
\begin{array}{ll}
\text{Dis}(P_{1,k})=&-16 (2 k-1)^6 (3 k-1) \left(576 k^6-720 k^5+380 k^4-212 k^3+183 k^2-89 k+17\right)\\
& \left(41-12428 k-51458 k^2+3664611 k^3-32461588 k^4+126891032 k^5-\right.\\
&\left.257528192 k^6 + 276914736 k^7 - 143578944 k^8 + 22830336 k^9 + 
 3317760 k^{10}\right).
\end{array}
\]
Performing a simple analysis, it can be seen that $\text{Dis}(P_{0,k}), \text{Dis}(P_{1,k})<0$ for $k>2$. Therefore, $P_{0,k}(x)$ does not admit real zeros and $P_{1,k}(x)$ has at most one real zero, counting multiplicity. Consequently  $W_4(x)$ does not vanish in $\R$ and  $W_5(x)$ has at most one positive zero, which is simple if it exists. Now,
$P_{1,k}(0) =(-3 + 2 k) (-1 + 3 k) (-3 + 4 k) (-1 + 4 k) (-1 + 10 k) > 0$ and
\[
\displaystyle\lim_{x\to \infty}\text{Sign}(P_{1,k}(x))= \text{Sign}\left( \left(40 k - 516 k^2 + 2220 k^3 - 3808 k^4 + 2640 k^5 - 576 k^6\right)\right)\ <0.
\]
Therefore, $W_5(x)$ has exactly one positive zero, which is simple.

\bigskip

The Wronskians of the family $\F_6^2$ are
	\[
		\begin{array}{rcl}
			W_0(x)&=&1,\\
			W_1(x)&=& 4 x^{3},\\
			W_2(x)&=& 240 x^{11},\\
			W_3(x)&=&-11520 x^{14},\\
			W_4(x)&=&1474560 x^{12},\\
			W_5(x)&=&13271040 x^8 P_{2,2}(x^6),\\
			W_6(x)&=& -\dfrac{183458856960 x^{18}P_{4,2}(x^6) }{\left(x^6+1\right)^5},
		\end{array}
	\]
where
	\[
	\begin{array}{ll}
	P_{2,2}(x)=15 - 175 x + 12012 x^2\\
		\end{array}
	\]
	and
		\[
	\begin{array}{ll}
	P_{4,2}(x)=8008 x^{4}+460390 x^{3}-993711 x^{2}+29800 x-6650.\\
	\end{array}
	\]
Clearly, $W_i(x) \neq 0$ for $i=0,1,2,3,4,5$. Now, the discriminant  of $P_{4,2}(x)$ is given by $$\text{Dis}(P_{4,2})=-5822536650303705842827108906279200.$$ Thus, $P_{4,2}(x)$ has at most two real zeros counting multiplicity. Additionally, $P_{4,2}(0)=-6650$ and $ \lim_{x\to \infty} P_{4,2}(x)=\infty$. Therefore, $P_{4,2}(x)$ and, consequently, $W_{6}(x)$ have exactly on positive zero, which is simple. 
 
This ends the proof of the Proposition \ref{pro2}.

\end{proof}

	\begin{proposition} \label{pro3} 
	The sets of functions $\F_6^k$, for $k>2$, is an ET-system with accuracy $1$ on $[a,b]$, for any $0<a<b$.
	\end{proposition}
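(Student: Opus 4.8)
The strategy mirrors the treatment of the families handled in Proposition \ref{pro2}: I would compute all the Wronskians $W_0(x),\dots,W_7(x)$ associated with the ordered set $\F_6^k=[u_1^k,u_4^k,u_9^k,u_6^k,u_3^k,u_{16}^k,u_{17}^k]$ and then invoke Theorem \ref{t2}, which only requires that $W_0,\dots,W_6$ be nonvanishing on $(0,\infty)$ while $W_7$ has exactly one simple zero there. Since $\F_6^k$ coincides, as an \emph{unordered} set, with $\F_4^k$ plus the extra generator $u_1^k\equiv 1$ placed first, many of the lower-order Wronskians will factor as monomials times constants depending on $k$ (the entries $u_1^k,u_4^k,u_9^k,u_6^k,u_3^k$ are all pure powers of $x$), and only the presence of $u_{16}^k$ and $u_{17}^k$ — the latter carrying the $\arctan(x^{2k-1})$ factor — will produce a nontrivial polynomial factor $P_k(x^{4k-2})$ in $W_7$. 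So the first step is the explicit symbolic computation of the seven Wronskians, displaying each as $c_k\, x^{e_k}$ for $i\le 6$ and as $c_k\, x^{e_k}\,(x^{4k}+\cdots)^{-m}P_k(x^{4k-2})$ for $i=7$, exactly as was done for $\F_5^k$ and $\F_6^2$.

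The second step is to check that the leading constants $c_k$ do not vanish for any integer $k>2$; these will be products of linear and low-degree polynomial factors in $k$ (of the shape $(2k-1)$, $(4k-1)$, $(6k-1)$, $k-1$, etc.), so the non-vanishing is a finite check on sign/roots. The monomials $x^{e_k}$ are automatically nonzero on $(0,\infty)$. This disposes of $W_0,\dots,W_6$.

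The crux is the third step: showing that the polynomial $P_k(y)$ (with $y=x^{4k-2}>0$) has exactly one positive root and that it is simple. As in the analysis of $P_{1,k}$ and $P_{4,2}$ in Proposition \ref{pro2}, I would (a) compute the discriminant $\mathrm{Dis}(P_k)$ as a polynomial in $k$ and argue it is negative for all integers $k>2$ — via an explicit factorization together with a sign analysis of the remaining factor for large $k$, reducing to checking finitely many small cases by hand — so that $P_k$ has at most one real root counting multiplicity; then (b) evaluate $P_k(0)$ and the sign of the leading coefficient of $P_k$ (equivalently $\lim_{y\to\infty}\mathrm{Sign}(P_k(y))$) and observe they are opposite for $k>2$, forcing exactly one positive root, necessarily simple. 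Theorem \ref{t2} then yields that $\F_6^k$ is an ET-system with accuracy $1$ on every $[a,b]\subset(0,\infty)$, which is the claim.

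The main obstacle I anticipate is controlling the sign of $\mathrm{Dis}(P_k)$ uniformly in $k>2$: the discriminant is likely a high-degree polynomial in $k$ with large coefficients (compare $\mathrm{Dis}(P_{1,k})$ above), so one must exhibit a clean factorization — isolating factors like $(2k-1)^{2m}$ and $(3k-1)$ that are manifestly positive — and then bound the residual factor, either by writing it in powers of $(k-3)$ with controlled-sign coefficients or by a crude estimate showing the top-degree term dominates for $k\ge 3$, after verifying $k=3$ (and possibly $k=4,5$) directly. If a slick sign argument is elusive, the honest fallback is to note $P_k$ is a fixed-degree polynomial in $y$ with coefficients polynomial in $k$, so the number of positive roots is determined by finitely many polynomial sign conditions in $k$, each of which can be resolved once for small $k$ and asymptotically for large $k$; combined with $P_k(0)$ and the leading-coefficient sign this still pins down "exactly one simple positive root."
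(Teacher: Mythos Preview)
Your plan has a genuine gap. First a minor point: $\F_6^k$ has seven functions, so the relevant Wronskians are $W_0,\dots,W_6$, not $W_0,\dots,W_7$. But the real issue is that the last Wronskian $W_6$ does \emph{not} have exactly one simple positive zero. In the paper's computation one finds
\[
W_6(x)=\dfrac{262144\,(k-1)^2k^4(2k-1)^7(3k-1)\,x^{24k-20}}{(x^{4k}+x^2)^5}\,P_{4,k}(x^{4k-2}),
\]
where $P_{4,k}$ is a quartic in $y=x^{4k-2}$. Its discriminant is indeed negative for $k>2$, but for a quartic that forces exactly two distinct real roots, not at most one; and since $P_{4,k}(0)>0$, $P_{4,k}(2)<0$, and $P_{4,k}(y)\to+\infty$, both of those roots are positive and simple. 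So $W_6$ has exactly two simple positive zeros, and Theorem~\ref{t2} is inapplicable. Your step~(a) would have revealed this: the discriminant sign bounds the number of real roots of a degree-$d$ polynomial only in conjunction with the parity of $d$, and here the conclusion is ``exactly two'' rather than ``at most one.'' From Theorems~\ref{t2}--\ref{t3} one gets only $7\le \mathcal{Z}(\F_6^k)\le 8$, which is not enough.

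The missing idea is a decomposition of $\operatorname{Span}(\F_6^k)$ that avoids the two-zero Wronskian. The paper writes
\[
\operatorname{Span}(\F_6^k)=\operatorname{Span}(\Ge^k)\ \cup\ \bigcup_{\alpha,\beta\in\R}\operatorname{Span}(\He^k_{\alpha,\beta}),
\]
with $\Ge^k=[u_1^k,u_4^k,u_9^k,u_6^k,u_3^k,u_{16}^k]$ (drop $u_{17}^k$) and $\He^k_{\alpha,\beta}=[u_4^k,u_9^k,u_6^k,u_3^k,\ \alpha u_1^k+\beta u_{16}^k+u_{17}^k]$. The first family is ECT (its Wronskians are the first six of $\F_6^k$), so $\mathcal{Z}(\Ge^k)=5$. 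For the five-function family $\He^k_{\alpha,\beta}$ the first four Wronskians are nonvanishing monomials, and one shows that the last one, written as $Q^k_{\alpha,\beta}(x)$ times a nonvanishing factor, has at most three positive zeros --- by differentiating, factoring out a nonvanishing piece, and controlling the zeros of $(Q^k_{\alpha,\beta})'$ via two auxiliary polynomials with negative discriminant (plus a boundary-sign/parity argument when $\alpha\neq0$, and reduction to $\F_4^k$ when $\alpha=0$). Theorem~\ref{t3} and Remark~\ref{remark} then give $\mathcal{Z}(\He^k_{\alpha,\beta})\le 7$, hence $\mathcal{Z}(\F_6^k)\le 7$, and combined with the lower bound one obtains accuracy~$1$. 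Your outline needs this extra layer; the direct ``all-but-last Wronskians nonvanishing, last has one zero'' template does not go through here.
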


\begin{proof}

Let $\Ge^k=[u_{0}^k, u_4^k, u_9^k, u_6^k, u_3^k, u_{16}^k]$  and $\He^k_{\alpha, \beta}=[ u_4^k, u_9^k, u_6^k, u_3^k, \alpha u_{0}^k+ \beta u_{16}^k+  u_{17}^k]$ be  ordered sets. Observe that  
$$\text{Span}(\F_6^k)=\text{Span}(\Ge^k) \cup \bigcup_{\alpha, \beta\in \mathbb{R}} \text{Span}(\He^k_{\alpha, \beta}).$$
 The demonstration of this lemma will be done in two steps. Firstly, we will show that the Wronskians defined by $\F_6^k$ are nonvanishing  except for the last one, which has two simple zeros. Secondly, we will prove that $\Ge^k$ is an ECT-system and that the Wronskians defined by $\He^k_{\alpha, \beta}$ are nonvanishing  except the last one, which has at most 3 zeros, counting multiplicity. Thus, from Theorems \ref{t1}, \ref{t2}, \ref{t3}, and
Remark \ref{remark}, we have that $7\leq\z(\F^6_k)\leq 8$, $\z(\Ge^k)=5$ and $4\leq \z(\He^k_{\alpha, \beta})\leq 7$. Hence, we conclude that $\z(\F^6_k)= 7.$

The Wronskians of the family $\F_6^k$ are provided by
	\[
		\begin{array}{rcl}
			W_0(x)&=&1,\\
			W_1(x)&=& 2 k x^{2 k-1},\\
			W_2(x)&=& 8 k (k (6 k-5)+1) x^{8 k-5},\\
			W_3(x)&=&-64 (1-2 k)^2 (k-1) k^2 (3 k-1) x^{12 k-10},\\
			W_4(x)&=&2048 k^4 (3 k-1) \left(2 k^2-3 k+1\right)^2 x^{14 k-16},\\
			W_5(x)&=&2048 (1-2 k)^4 (k-1)^2 k^4 (3 k-1) x^{14 k-20} P_{2,k}\left(x^{4 k-2}\right),\\
			W_6(x)&=& \dfrac{262144 (k-1)^2 k^4 (2 k-1)^7 (3 k-1) x^{24 k-20} P_{4,k}(x^{4k-2})}{\left(x^{4 k}+x^2\right)^5},
		\end{array}
	\]
where
	\[
	\begin{array}{ll}
	P_{2,k}(x)=6 k (4 k-1) (6 k-1) (8 k-3) x^2-(4 k-1) (2 k+1)^2 x+3 (2 k (4 k-9)+9)\\
		\end{array}
	\]
	and
		\[
	\begin{array}{ll}
	P_{4,k}(x)=\!\!\!\!\!\! &4 (k-1)^2 k (2 k-5) (3 k-2) (4 k-1) (6 k-1) (8 k-3) x^4\\
	&-2 (3 k-2) (4 k-1) (k (4 k (k (4 k (2 k (78 k-179)+235)-89)-59)+35)-1) x^3\\
	&+(3 k-2) (k (4 k (2 k (10 k (k (4 k (48 k-61)+177)-183)+1017)-465)+201)-19) x^2\\
	&-4 (3 k-1) (5 k-2) (2 k (k (4 k (k (4 k-19)+44)-75)-19)+13) x\\
	&+(2 k-3) (3 k-1) (4 k-3) (4 k-1) (5 k-2) (10 k-1).\\
	\end{array}
	\]
Clearly, $W_i(x) \neq 0$ for $i=0,1,2,3,4$. Now, we show that, for $k>2,$ $W_5(x)>0$ in $(0, \infty)$ and $W_6(x)$  has two positive zeros, which are simple.  
By computing the discriminant of $P_{2,k}$ and $P_{4,k}$ we obtain
\[\text{Dis}(P_{2,k})=-( 4 k-1) A_k,\]
where 
\[
A_k= 1 + 1948 k - 20744 k^2 + 66464 k^3 - 77296 k^4 + 27584 k^5
\]
and
\[
\begin{array}{ll}
\text{Dis}(P_{4,k})=-192 (2-3 k)^2 (1-2 k)^{12} (3 k-1) (4 k-1) (5 k-2) B_k C_k,
\end{array}
\]
with
\[
\begin{array}{ll}
B_k=&206 - 1917 k + 5508 k^2 + 14166 k^3 - 161955 k^4 + 507294 k^5 - 
336876 k^6 - 2819520 k^7 \\
&+ 11872944 k^8 - 24994208 k^9 + 
32211648 k^{10} - 24318720 k^{11} + 8294400 k^{12},\\
C_k=&1234 + 1406151 k - 140801881 k^2 + 1655961863 k^3 + 15757275163 k^4 \\
&- 454467427122 k^5 + 3991908595280 k^6 -18758368588312 k^7 + 52157245218176 k^8 \\
&- 84657031448672 k^9 + 65764683807488 k^{10} +13116254256768 k^{11}\\
& - 75206228610816 k^{12}+ 66368938080256 k^{13}+ 1454789099520 k^{17}.
\end{array}
\]
It is straightforward to see that $A_k, B_k, C_k>0$. Thus, we get that $\text{Dis}(P_{4,k}), \text{Dis}(P_{2,k})<0$.  Therefore, $P_{2,k}(x)$ and, consequently, $W_ 5(x)$, do not admit real zeros. Additionally, $P_{4,k}(x)$ and, consequently, $W_6(x)$ have at most two positive zeros counting multiplicity. Furthermore, 
\[
\begin{array}{rcl}
P_{4,k}(0) &=&( 2 k-3) ( 3 k-1) ( 4 k-3) ( 4 k-1) ( 5 k-2) ( 10 k-1),\\
P_{4,k}(2)&= &-6 - 3687 k + 63459 k^2 - 351684 k^3 + 787140 k^4 - 528768 k^5\\ 
& &-  478272 k^6 + 738816 k^7 - 221184 k^8,\\
\end{array} 
\] 
and $\displaystyle\lim_{x\to \infty} P_{4,k}(x)=\infty.$
Since $\sgn(P_{4,k}(0))=-\sgn(P_{4,k}(2))=1$, it follows that $P_{4,k}(x)$ and, consequently, $W_6(x)$ have exactly two positive zeros, which are simple. Therefore, from Theorems  \ref{t2} and \ref{t3}, it follows that 	$7\leq \z(\F_6^k)\leq 8$.

Since the Wronskians of $\Ge^k$ are, clearly, equal to the first six   Wronskians of $\F_6^k$, we get that $\z(\Ge^k)$ is an ECT-system and, from Theorem  \ref{t1}, $\z(\Ge^k)=5.$

Now, computing  the Wronskians of $\He^k_{\alpha, \beta},$ we obtain
\[
\begin{array}{rcl}
W_0(x)&=&x^{2k},\\
W_1(x)&=& (4 k-2) x^{8 k-3},\\
W_2(x)&=& -8 (k-1) k (2 k-1) x^{12 k-7},\\
W_3(x)&=&128 (k-1) k^3 (2 k-1) x^{14 k-12},\\
W_4(x)&=&128 (2 k-1)^3 (k-1) k^3 x^{14 k-15} S_3^k\left(x^{4 k-2}\right)  Q_{\alpha,\beta}^k(x),\\
\end{array}
\]
where

\[
\begin{array}{rcl}
Q_{\alpha,\beta}^k(x) & = & \left(\dfrac{\alpha S_1^k(x)+ (2 k-1) x^{2 k+9} S_2^k\left(x^{4 n-2}\right)}{(2 k-1) x^4 \left(x^{4 k}+x^2\right)^3 S_3^k\left(x^{4 k-2}\right)}+  \tan ^{-1}\left(x^{2 k-1}\right)+\beta \right),\\
S_{1}^k(x) & = & 16 (-1 + k) k (-1 + 3 k) x^3 (x^2 + x^{4 k})^3,\\
S_{2}^k(x) & = & -3 (9 + 2 k (-9 + 4 k)) + (-71 + 4 (37 - 15 k) k) x\\
& & + (-1 + 2 k) (61 + 2 k (-37 + 92 k)) x^2 \\
& &+ (-1 + 
4 k (3 + k (-41 + 96 k))) x^3 + 6 k (-1 + 4 k) (-1 + 6 k) x^4,\\
S^k_3(x) & = & 3 (9 + 2 k (-9 + 4 k)) - (1 + 2 k)^2 x + 6 k (-1 + 4 k) (-1 + 6 k) x^2.
\end{array}
\]
Clearly, $W_i(x) \neq 0$ for $i=0,1,2,3$.  The derivative of $Q_{\alpha,\beta}^k(x)$ can be written  as
 \[(Q^k_{\alpha, \beta})'(x)=R^k(x)S_{\alpha}^k(x),\]
 with
  \[
  \begin{array}{rcl}
 R^k(x)&=&\dfrac{ -16 (k-1) k (3 k-1) x^2  q_1^k(x^{4k-2})}{ (2 k-1) x^8  S^k_{3}\left(x^{4 k-2}\right)^2},\\
 S_{\alpha}^k(x)&=& \alpha+ \dfrac{ 4 (1 - 2 k)^2 x^{  10 k+4} q_2^k(x^{4k-2})}{ (k-1) k (3 k-1)  \left(x^{4 k}+x^2\right)^4 q_{1}^k(x^{4k-2})},
  \end{array}
  \] 
 where
 \[
  \begin{array}{rcl}
  q_1^k(x)&=&6 k (4 k-1) (6 k-1) (8 k-3) x^2-(2 k+1)^2 (4 k-1) x+3 (2 k (4 k-9)+9),\\ 
  q_2^k(x)&=& 9 - 171 k + 1052 k^2 - 2692 k^3 + 2816 k^4 - 960 k^5 \\
  & &+ (-29 + 107 k + 680 k^2 - 3644 k^3 + 4848 k^4 - 1728 k^5) x 
  \\
  & &+(4 - 80 k + 996 k^2 - 3796 k^3 + 4496 k^4 - 432 k^5 - 1152 k^6) x^2\\
  & &+(-40 k + 516 k^2 - 2220 k^3 + 3808 k^4 - 2640 k^5 + 576 k^6) x^3.
  \end{array}
  \]
Observe that, for $k>2,$ the function $q_1^k(x)$ is positive. Indeed, 
  \[\text{Dis}(q_1^k)=-(4 k-1) (1 + 1948 k - 20744 k^2 + 66464 k^3 - 77296 k^4 + 27584 k^5)<0\]
  and $q_1^k(0)>0 $.  Moreover, notice that $R^k(x)$ does not vanish in $(0, \infty)$ and
\[(S^k_{\alpha})'(x)=\dfrac{ 8 (1-2 k)^2 x^{10 k-5} q_3(x^{4 k - 2}) q_4^k(x^{4 k - 2})}{( k-1) k (3 k-1) (1+ x^{4 k-2})^5 ( q_1^k(x^{4 k - 2}))^2 },\]
where 
\[
\begin{array}{rcl}
q_3^k(x)&=&-27 - 6 k (-9 + 4 k) + (1 + 4 k (1 + k)) x - 
6 k (-1 + 4 k) (-1 + 6 k) x^2,\\
q_4^k(x)&=& (-3 + 2 k) (-1 + 3 k) (-3 + 4 k) (-1 + 4 k) (-2 + 5 k) (-1 + 10 k)- \\
& &4 (-1 + 3 k) (-2 + 5 k) (13 + 
2 k (-19 + k (-75 + 4 k (44 + k (-19 + 4 k))))) x \\
& & +(38 - 459 k + 4323 k^2 - 21852 k^3 + 53688 k^4 - 72240 k^5)x^2\\ 
& &+ (81520 k^6 - 89280 k^7 + 46080 k^8) x^2+ (4 - 162 k + 1738 k^2 - 4608 k^3 )x^3
\\
& &+(-17208 k^4 + 114176 k^5 - 226240 k^6 + 192384 k^7 - 59904 k^8)x^3\\
& & +(4 (-1 + k)^2 k (-5 + 2 k) (-2 + 3 k) (-1 + 4 k) (-1 + 6 k) (-3 + 8 k)) x^4.\\
\end{array}
\]
By computing the discriminant of $q_3^k$ and $q_4^k$ we obtain
\[\text{Dis}(q_3^k)=1 - 8 k (80 + k (-975 + 2 k (1816 + k (-2305 + 864 k))))\]
and
\[\text{Dis}(q_4^k)= -192 (3 k-2)^2 ( 2 k-1)^{12} ( 3 k-1) ( 4 k-1) ( 5 k-2) D_k E_k,
\]with
\[
\begin{array}{rcl}
D_k&=&206 - 1917 k + 5508 k^2 + 14166 k^3 - 161955 k^4 + 507294 k^5 - 
336876 k^6 \\
& &-2819520 k^7 + 11872944 k^8 - 24994208 k^9 + 32211648 k^{10} - 24318720 k^{11} \\
& &+ 8294400 k^{12},\\
E_k&=&1234 + 1406151 k - 140801881 k^2 + 1655961863 k^3 + 15757275163 k^4 \\
& &- 454467427122 k^5+3991908595280 k^6 - 18758368588312 k^7 + 52157245218176 k^8 \\
& &- 84657031448672 k^9 +65764683807488 k^{10} + 13116254256768 k^{11} \\
& &- 75206228610816 k^{12} + 66368938080256 k^{13}-30092670877696 k^{14} \\ & &+12225870102528 k^{15} - 5928649555968 k^{16} + 1454789099520 k^{17}.\\
\end{array}
\]
Thus, by straightforward  computations, we obtain  $\text{Dis}(q_3^k), \text{Dis}(q_4^k)<0$, for $k>2$. Therefore, $q_{3}(x)$  does not admit real zeros and $q_4^k$ has at most two positive zeros counting multiplicity. It implies that  the number of zeros of $(S^k_{\alpha})'(x)$ counting multiplicity is at most two. Consequently, $(Q^k_{\alpha, \beta})'(x)$ has at most 3 zeros. Notice that
\[\lim_{x\to 0} \text{Sign}((Q^k_{\alpha, \beta})'(x))=\lim_{x \to \infty }\text{Sign}((Q^k_{\alpha, \beta})'(x))=-\alpha.\]
 For $\alpha\neq 0$, follows that $(Q^k_{\alpha, \beta})'(x)$ has at most 2 zeros. Therefore,  $Q^k_{\alpha, \beta}$ and, consequently, $W_4$ have at most 3 positive zeros.  Thus, from Theorem  \ref{t3} and
Remark \ref{remark}, we get that  $ \z(\He^k_{\alpha, \beta})\leq 7$. For $\alpha= 0$, it follows that $\text{Span}(\He^k_{\alpha, \beta})\subset\text{Span} (\F_4^k)$. Taking Proposition \ref{pro2} into account, we get that  $ \z(\He^k_{\alpha, \beta})= 6$. This ends the proof of the Proposition \ref{pro3}.
 
\end{proof}

\begin{proposition} \label{pro4} 
	For $\lambda\in\R$,  $ \z(\F_7^{1, \lambda})\leq 10$ on $[a,b]$, for any $0<a<b.$ In addition, for $\la=2,$ there exists a function in $\text{Span} (\F_7^{1, 2})$ having 8 simple zeros in $(0, \infty)$.
\end{proposition}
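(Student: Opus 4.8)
The plan is to apply Theorem~\ref{t3} together with Remark~\ref{remark} to the ordered family $\F_7^{1,\lambda}=[u_{18}^1,u_{19}^1,u_{20}^1,u_{21}^1,u_{22}^1,u_{23}^1,u_{24}^{1,\lambda}]$, for which $n=6$, splitting the computation according to the dependence on $\lambda$. First I would specialize $k=1$ in all the functions involved; if one wishes to avoid the half-integer exponents $x^{3/2},x^{5/2}$ occurring in $u_{21}^1$ and $u_{23}^1$, the change of variable $x=t^2$ (a diffeomorphism of $(0,\infty)$, hence preserving $\z$) turns every function of the family into a polynomial. I would then compute the Wronskians $W_0,\dots,W_6$ of the family. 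The crucial structural observation is that $W_0,\dots,W_5$ are the Wronskians of $[u_{18}^1,\dots,u_{23}^1]$ alone, so they do \emph{not} depend on $\lambda$; the task here is to check — as in Propositions~\ref{pro1}--\ref{pro3}, possibly through the discriminants of the polynomial factors appearing in $W_3,W_4,W_5$ — that each of them is nonvanishing on $(0,\infty)$, i.e.\ $\nu_0=\dots=\nu_5=0$.

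Next I would treat the top Wronskian $W_6$, which is the only one carrying $\lambda$. After factoring out its manifestly nonvanishing part (a monomial in $x$, a power of $3x^2+1$, and a nonzero numerical constant), one is left with a residual polynomial $P_\lambda$ in a suitable power of $x$, whose coefficients depend polynomially on $\lambda$. The key step is to show that $P_\lambda$ has at most $4$ positive zeros counting multiplicity for every $\lambda\in\R$; I expect $P_\lambda$ to have degree $4$ in the relevant power of $x$ (paralleling the polynomial $P_{4,k}$ of $\F_6^k$ in Proposition~\ref{pro3}), in which case this bound is automatic. Granting $\nu_6\le 4$ together with $\nu_0=\dots=\nu_5=0$, all the terms $\mu_3,\mu_4,\mu_5$ in \eqref{upper} vanish, since $\mu_i=\min(2\nu_i,\nu_{i-3}+\cdots+\nu_0)=0$, and Theorem~\ref{t3} with Remark~\ref{remark} yields
\[
\z(\F_7^{1,\lambda})\ \le\ 6+\nu_6+\nu_5+2(\nu_4+\nu_3+\nu_2+\nu_1+\nu_0)\ =\ 6+4\ =\ 10 .
\]

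For the second assertion I would set $\lambda=2$ and analyze $P_2$: a direct computation should show that $P_2$ has exactly two simple positive zeros — its remaining roots being negative or non-real — so $\nu_6=2$ while $\nu_0=\dots=\nu_5=0$, whence $\F_7^{1,2}$ is an ET-system with accuracy exactly $2$ and $\z(\F_7^{1,2})=8$. It then remains to exhibit an element of $\Span(\F_7^{1,2})$ with $8$ \emph{simple} zeros. I would do this by fixing explicit coefficients $c_1,\dots,c_7$ and checking directly — for instance, by evaluating $f=c_1 u_{18}^1+c_2 u_{19}^1+\cdots+c_6 u_{23}^1+c_7 u_{24}^{1,2}$ at nine points of $(0,\infty)$ and invoking the intermediate value theorem — that $f$ changes sign $8$ times; the zeros are then necessarily simple, because $\z(\F_7^{1,2})=8$ forbids any multiplicity. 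Alternatively, one invokes the realizability part of the theory of Chebyshev systems with accuracy in \cite{NOTOR2017} (the natural extension of the ``moreover'' clause of Theorem~\ref{t2} to the case $\nu_6=2$) to realize any configuration of $8$ distinct interior points.

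I expect the main obstacle to lie in two places. First, in verifying the non-vanishing of the lower Wronskians $W_3,W_4,W_5$ of the $\lambda$-free family: although independent of $\lambda$, they may carry polynomial factors whose positivity must be settled via a discriminant analysis, exactly in the spirit of the arguments for $\F_4^k$ and $\F_6^k$ in Propositions~\ref{pro2} and~\ref{pro3}. Second, in pinning down the \emph{exact} number of positive zeros of $P_2$ (and of $P_\lambda$ for $\lambda$ near $2$, to be certain that $2$, rather than a larger value, is what occurs) — this requires evaluating $P_2$ and its derivative at well-chosen points, or a Sturm-sequence computation. The remaining ingredients — the symbolic computation of the Wronskians and the bookkeeping in \eqref{upper} — are routine.
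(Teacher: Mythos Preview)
Your plan differs substantially from the paper's, and the part you flag as ``routine'' is precisely where the paper chooses a different, cleaner route.

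For the upper bound, the paper does \emph{not} compute the Wronskians of $\F_7^{1,\lambda}$ at all. Instead it observes that $u_{24}^{1,\lambda}$ is a polynomial in $x$ of degree $5$, so its $5$th derivative is a constant; likewise, the $5$th derivatives of the polynomial members $u_{18}^1,u_{19}^1,u_{20}^1,u_{22}^1$ land in $\Span\{1,x,x^2,x^3\}$. Hence for any $f\in\Span(\F_7^{1,\lambda})$ one has $f^{(5)}\in\Span(J_0)$ with $J_0=[1,x,x^2,x^3,(u_{21}^1)^{(5)},(u_{23}^1)^{(5)}]$, a six-element family that is \emph{independent of $\lambda$}. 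The paper then checks (via six easy Wronskians) that $J_0$ is an ECT-system, so $f^{(5)}$ has at most $5$ zeros and therefore $f$ has at most $10$. This differentiation trick sidesteps entirely the issue you identify as the main obstacle: controlling a $\lambda$-dependent top Wronskian $W_6$ uniformly in $\lambda$.

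Your direct Wronskian approach is not wrong in principle, but the two key claims---that $W_0,\dots,W_5$ are nonvanishing and that the residual factor of $W_6$ has at most $4$ positive zeros for \emph{every} $\lambda$---are left as expectations. The second one in particular is not obvious: $W_6$ is a $7\times 7$ determinant mixing half-integer powers with a degree-$5$ polynomial whose coefficients are themselves polynomials in $\lambda$, and nothing forces the residual factor to have degree $4$ in a nice auxiliary variable. Even if it works, you would be doing a $\lambda$-parametrized discriminant analysis where the paper does none.

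For the lower bound the two approaches converge: the paper simply exhibits explicit numerical coefficients $a_0,\dots,a_5$ (with $a_6=1$) for $\lambda=2$, passes to $g(x)=f(x^2)$, which is a genuine polynomial of degree $19$ on $(0,\infty)$, and checks by direct computation that $g$ has $8$ zeros and that $\mathrm{Dis}(g)\neq 0$, which certifies simplicity. Your alternative route---first proving $\nu_6=2$ so that $\z(\F_7^{1,2})=8$ and then deducing simplicity of any $8$ zeros---again hinges on an unverified Wronskian computation, and the realizability clause you invoke (an ``extension of Theorem~\ref{t2}'' to accuracy $2$) is not stated in the paper.
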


\begin{proof}
 Let 
 \[f(x)= a_0 u_{18}^1(x)+a_1 u_{19}^1(x)+a_2 u_{20}^1(x)+a_3 u_{21}^1(x)+a_4 u_{22}^1(x)+a_5 u_{23}^1(x)+a_6 u_{24}^{1, \lambda}(x)\]
 be a function in $\text{Span}(\F_7^{1,\lambda})$. The $5$th derivative of $f$, $f^{(5)}(x),$ is written as a linear combination of the functions of the ordered set $$J_0=[1, x, x^2, x^3,(u_{21}^1)^{(5)}(x),(u_{23}^1)^{(5)}(x)].$$ Computing  the Wronskians of $J_0,$ we get
\[
\begin{array}{rcl}
W_0(x)&=&1,\\
W_1(x)&=&1,\\
W_2(x)&=&2,\\
W_3(x)&=&12,\\
W_4(x)&=& \dfrac{8505 \left(9 \left(429 \left(85 x^4+x^2\right)+35\right) x^2+55\right)}{128 x^{13/2}},\\
W_5(x)&=& \dfrac{120558375}{65536 x^{15}} \left(409280498055 x^{14}+16979438619 x^{12}+2324256363 x^{10}\right.
\\& &\left.+589231071 x^8+64265157 x^6+508833 x^4+23177 x^2+1573\right).
\end{array}
\]
Clearly, all the Wronskian above do not vanish in $(0, \infty)$, which implies that $J_0$ is an ECT-system.  From Theorem \ref{t1},  $f^{(5)}(x)$ has at most 5 zeros and, therefore, $f(x)$ has at most ten zeros. Consequently, $\z(\F_7^{k, \lambda})\leq10.$

Finally, let $f(x) \in \text{Span}(F_7^{1, 2}) $ be provided by
\[
\begin{array}{ll}
f(x)= a_0 u_{18}^1(x)+a_1 u_{19}^1(x)+a_2 u_{20}^1(x)+a_3 u_{21}^1(x)+a_4 u_{22}^1(x)+a_5 u_{23}^1(x)+ u_{24}^{1, 2}(x),
\end{array}
\]
where 
\[\begin{array}{ll}
a_0=-29.674872845038724, &a_1=-88.998921871,\\
a_2=1.777150602939737, &a_3=-2.0194231196937788\times 10^{-5},\\
a_4=0.5926213398946085, &a_5=3.18899089714221\times 10^{-8}.
\end{array}\]
The function $g$, defined by $g(x)=f(x^{2})$, is a polynomial of degree 19 in interval $(0,\infty)$. Direct computation shows that $g$ has 8 zeros, which are simple as $\text{Dis}(g)\neq 0$.
\end{proof}

\begin{proposition} \label{pro5} 
	For $k>1$ and $\la\in\R$,  $ \z(\F_7^{k, \lambda})\leq 14$ on $[a,b]$, for any $0<a<b.$ In addition, for $\la=1,$ there exists a function in $\text{Span} (\F_7^{k, 1})$ having 9 simple zeros in $(0,\infty).$
\end{proposition}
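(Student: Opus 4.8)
The plan is to adapt the proof of Proposition~\ref{pro4} from $k=1$ to arbitrary $k>1$. The starting point is the change of variable $x=t^{2k}$, which is a diffeomorphism of $(0,\infty)$ onto itself and hence preserves the number of zeros, with multiplicities, in $(0,\infty)$. Under it the fractional exponents $1/k$ and $3/(2k)$ appearing in $u_{18}^k,\dots,u_{23}^k$ become the integers $2$ and $3$, so each of these six generators turns into a polynomial in $t$; explicitly, writing $P(t)=(2k+1)t^{4k}+1$ (which is positive on $(0,\infty)$) and $\widetilde u_i(t):=u_i^k(t^{2k})$, one gets $\widetilde u_{18}=t^2P^3$, $\widetilde u_{19}=-t^{4k+2}P^2$, $\widetilde u_{20}=-t^{6k+2}P^2$, $\widetilde u_{21}=t^{2k+3}P^3$, $\widetilde u_{22}=t^{2k+2}P^3$, $\widetilde u_{23}=t^3(t^{4k}+1)P^3$, while $\widetilde u_{24}:=u_{24}^{k,\lambda}(t^{2k})$ is a polynomial of degree $10k$. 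Thus $\z(\F_7^{k,\lambda})$ equals the maximal number of positive zeros of $g(t)=\sum_i a_i\widetilde u_i(t)$, a genuine polynomial, over all choices of $a_0,\dots,a_6$.

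For the upper bound I would compute the seven Wronskians $W_0,\dots,W_6$ of the ordered set $[\widetilde u_{18},\dots,\widetilde u_{24}]$ (equivalently, as was done for the families in Propositions~\ref{pro1}--\ref{pro3}, one may keep $k$ symbolic throughout). The expectation, in line with all the earlier computations, is that each $W_j$ factors as a monomial in $t$ times a polynomial in the single variable $t^{4k}$ of degree bounded independently of $k$, that $W_0,\dots,W_4$ are nonvanishing on $(0,\infty)$, and that $W_5,W_6$ carry polynomial factors whose numbers of positive zeros, counted with multiplicity, are controlled by a discriminant-and-sign analysis uniform in $k>1$ --- precisely the kind of argument carried out for $P_{1,k}$ and $P_{4,k}$ in Propositions~\ref{pro2} and~\ref{pro3}. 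Setting $\nu_0=\dots=\nu_4=0$, all the quantities $\mu_3,\mu_4,\mu_5$ in Theorem~\ref{t3} vanish, so formula~\eqref{upper} reduces to $6+\nu_5+\nu_6$, and one is left to check $\nu_5+\nu_6\le 8$; Remark~\ref{remark} removes the simplicity requirement on the Wronskian zeros. Should some earlier Wronskian vanish identically (or for special parameter values), I would, exactly as in Proposition~\ref{pro3}, decompose $\Span(\F_7^{k,\lambda})$ as a finite union of spans of shorter ordered sets and handle each piece with Theorems~\ref{t1}, \ref{t2}, \ref{t3}.

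For the second assertion, with $\lambda=1$, I would produce, for every $k>1$ simultaneously, an explicit $f\in\Span(\F_7^{k,1})$ by prescribing $a_0,\dots,a_5$ as explicit functions of $k$ and normalizing the coefficient of $u_{24}^{k,1}$ to $1$; then $g(t)=f(t^{2k})$ is an honest polynomial in $t$, and one must show that it has exactly nine simple positive zeros for all $k>1$. Simplicity would follow by verifying that its discriminant is a nonzero polynomial in $k$, and the count would be obtained by a sign analysis of $g$ at a $k$-dependent chain of sample points together with Descartes' rule, in the same spirit as the treatment of the degree-$19$ polynomial at the end of the proof of Proposition~\ref{pro4} (there only for the single value $k=1$).

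The main obstacle is the uniformity in $k$ on both fronts. First, the polynomial factors of $W_5$ and $W_6$ have coefficients that are high-degree polynomials in $k$, so ruling out extra real zeros requires establishing that several discriminants keep a constant sign for all $k>1$ --- computations of the same nature as, but heavier than, those in Propositions~\ref{pro2} and~\ref{pro3}. Second, one must exhibit a \emph{single} $k$-parametrized family of functions realizing nine simple zeros and certify the root count for the whole range $k\ge 2$ at once, rather than numerically checking one instance; locating each of the nine zeros in a $k$-dependent interval by the intermediate value theorem, and bounding the total number of zeros from above by the part already proved, is the natural way to close this gap.
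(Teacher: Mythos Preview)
Your outline follows the general Chebyshev-system philosophy used throughout the paper, but it misses the two specific ideas that make the paper's argument work, and the obstacles you flag are real.

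\textbf{Upper bound.} You propose to compute all seven Wronskians of $[\widetilde u_{18},\dots,\widetilde u_{24}]$ and bound $\nu_5+\nu_6\le 8$ via Theorem~\ref{t3}. The difficulty is that $\widetilde u_{24}$, and hence $W_6$, depends on the free parameter $\lambda$, so you would need a zero bound on $W_6$ that is uniform in $\lambda\in\R$ as well as in $k>1$; you give no mechanism for this, and it is not clear such a bound of the required strength holds. The paper avoids the issue altogether by observing that $u_{24}^{k,\lambda}$ is a polynomial of degree $5$ in $x$, so $(u_{24}^{k,\lambda})^{(8)}\equiv 0$. Hence $f^{(8)}$ lies in the span of the \emph{six} functions $(u_{18}^k)^{(8)},\dots,(u_{23}^k)^{(8)}$, none of which involves $\lambda$. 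Their Wronskians factor nicely (monomial times a polynomial $U_i^k$), with $U_0^k,\dots,U_4^k$ nonvanishing on $(0,\infty)$ and $U_5^k$ having exactly one simple positive zero; Theorem~\ref{t2} then gives $\z(\mathcal H^k)=6$, and Rolle yields $\z(\F_7^{k,\lambda})\le 6+8=14$. Differentiating away $u_{24}^{k,\lambda}$ is the key step you are missing.

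\textbf{Lower bound.} Your plan is to pick a single $k$-parametrized choice of coefficients and certify nine simple positive zeros directly for all $k>1$; you yourself note this is the hard part. The paper does something structurally different and more robust: it splits $9=4+5$. With a specific coefficient vector $a=0$, the function $g_k(x;0)=f(x^{2k};0)$ is shown to have exactly four simple zeros in $(0,2)$ (checked case-by-case for $2\le k\le 30$, and for $k>30$ by writing $g_k(x;0)=H_1(x^{2k})+H_2(x^{2k})$ and controlling zeros via the Wronskian $W(H_1,H_2)$ together with Theorem~\ref{t3}). These four zeros persist for all small $a$ by continuity. Then, after the inversion $x=y^{-1}$, the resulting $h_k(y;a)$ has the explicit expansion
\[
h_k(y;a)=a_4+a_2 y^{2k}+a_3 y^{2k+1}+\tfrac{2a_4(2+k)}{1+2k}y^{4k}-a_0 y^{4k+1}+\tfrac{3a_2}{2k+1}y^{6k}+a_1 y^{6k+1}+y^{6k+3}+O(y^{6k+4}),
\]
whose leading coefficients can be chosen freely and arbitrarily small; a standard unfolding then produces five additional simple zeros of $h_k$ near $y=0$, i.e.\ of $g_k$ near infinity, without disturbing the four zeros in $(0,2)$. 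This ``persistent zeros plus bifurcating zeros'' scheme is what makes the construction uniform in $k$, and it replaces the explicit-coefficients route you propose.
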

	
	\begin{proof}
Let \[f(x)= a_0 u_{18}^k(x)+a_1 u_{19}^k(x)+a_2 u_{20}^k(x)+a_3 u_{21}^k(x)+a_4 u_{22}^k(x)+a_5 u_{23}^k(x)+a_6 u_{24}^{k, \lambda}(x)\]  be a function in $\text{Span}(\F_7^{k, \lambda})$. Since $(u_{24}^{k,\lambda})^{(8)}=0$ for every $k>1,$ $f^{(8)}(x)$ is written as a linear combination of the functions of the ordered set 
$$\He^k=\left[( u_{18}^k)^{(8)}, ( u_{19}^k)^{(8)}, ( u_{20}^k)^{(8)}, ( u_{21}^k)^{(8)}, ( u_{22}^k)^{(8)}, ( u_{23}^k)^{(8)}\right].$$ 
Computing  the Wronskians of $\He^k,$ we get
\[
	\begin{array}{rcl}
		W_0(x)&=&-\dfrac{(k-1) x^{\frac{1}{k}-8}}{k^8}U_0^k(x),\\
		W_1(x)&=&\dfrac{2 (k-1)^2 (k+1) (2 k-1) (2 k+1) (3 k-1) x^{\frac{2}{k}-15}}{k^{16}}U_1^k(x),\\
		W_2(x)&=&\dfrac{6 (k-1)^2 (k+1)^2 (2 k-1) (2 k+1)^2 \left(9 k^2-1\right) x^{\frac{3}{k}-22}}{k^{24}} U_2^k(x),\\
		W_3(x)&=&\dfrac{81 (2 k-1) (2 k+1)^3 \left(k^2-1\right)^2 \left(18 k^3+27 k^2-2 k-3\right) x^{\frac{9}{2 k}-32}}{1024 k^{35}} U_3^k(x),\\
		W_4(x)&=&-\dfrac{81 (1-2 k)^2 \left(2 k^3+k^2-2 k-1\right)^3 \left(18 k^3+27 k^2-2 k-3\right) x^{\frac{11}{2 k}-43}}{1024 k^{44}} U_4^k(x),\\
		W_5(x)&=& -\dfrac{729 (1-2 k)^2 \left(2 k^3+k^2-2 k-1\right)^3 \left(18 k^3+27 k^2-2 k-3\right) x^{7 \left(\frac{1}{k}-8\right)}}{4194304 k^{56}} U_5^k(x),
	\end{array}
\]
where $U_0, U_1, U_2, U_3, U_4$  and $U_5$ are  polynomials of degrees 6, 8, 12, 18, 22,  and 30, respectively. By straightforward  computations, we get that $U_i(x)$, for $i=0,1,2,3,4$, does not vanish in $(0,\infty)$ and $U_5(x)$ has exactly one positive zero, which is simple.  From Theorem \ref{t2}, it follows that $\z(\He^k) =6$.  Hence, we conclude that $\z(\F_7^{k, \lambda})\leq14.$\\

In what follows, we shall prove that  there exists a function in $\text{Span}(\F_7^{k,1})$ having 9 simple zeros in $(0, \infty)$. Accordingly, let $f(x;a) \in \text{Span}(F_7^{k, 1}) $ be provided by
\[
\begin{array}{ll}
f(x;a)=&(1 + 2 k) (a_0 - 4 (1 + k))u_{19}^k(x)+(-3 a_3 + a_1 (1 + 2 k))(1 + 2 k )u_{20}^k(x)\\
&-4(1+k) u_{18}^k(x)+a_2 u_{21}^k(x)+(-2 a_3 + a_1 (1 + 2 k)) u_{22}^k(x)+a_4 u_{23}^k(x)+u_{24}^{k,1}(x),
\end{array}
\]
where $a=(a_0,a_1,a_2, a_3, a_4) \in \R^5.$ \\

Denote $g_k(x;a):=f(x^{2k};a).$ First, we prove that, for each integer $k>1,$ there exists $\delta_k>0$ such that $g_k(x;a)$ has at least 4 simple zeros in $(0,2)$, for every $a\in B(0, \delta_k).$ Notice that $g(x;0)$ has at least 4 zeros in $(0,2)$ with odd multiplicity, for every $k>1$. Indeed, 
\[g_k(0;0)>0,\,\, g_k(1/2;0)<0,\,\, g_k(1;0)=0,\,\, g_k'(1;0)<0,\,\text{ and }\, g_k(2;0)>0.\]
For $ 2\leq  k\leq 30,$ it is relatively easy to see that $\text{Dis}(g_k(x;0))\neq 0,$ which implies that the 4 zeros above are simple. Now, for $k>30,$
we have
$$g_k(x;0)=H_1(x^{2k})+H_2(x^{2k}),$$
where
\[
\begin{array}{rcl}
H_1(x)&=&\left(8 k^3+6 k+4\right) x^3+2 \left(12 k^2+9 k+2\right) x^2+\left(-4 k^2+4 k+3\right) x+1\\
& &+\left(16 k^2+14 k+3\right) x^4+(2 k+1)^3 x^5,\\
H_2(x)&=&-8 \left(2 k^2+3 k+1\right) x^{\frac{1}{k}+2}-4 (k+1) (2 k+1)^2 x^{\frac{1}{k}+4}-4 (k+1) x^{1/k}.
\end{array}
\]
Notice that $H_1(x)>0$ for $x>0$. The Wronskian of $[H_1(x),H_2(x)]$ can be written as
$W_1(x)=\dfrac{4 (k+1) x^{\frac{1}{k}-1}}{k} P_{5,k}(x)$, with
\[
	\begin{array}{rl}
		P_{5,k}(x)=&-1-\left(4 (k-2) k^2+k+3\right) x+2 \left(k \left(24 k^2+2 k-9\right)-3\right) x^2\\
		&+10 (k-1) (2 k+1) \left(2 k^2+k+1\right) x^3-2 (2 k+1) (k (12 k+19)+6) x^4\\
		&+2 (k-1) (2 k+1)^2 (5 k (2 k+1)+6) x^5-2 (2 k+1)^2 (k (2 k (12 k+7)+15)+5) x^6\\
		&+2 (2 k+1)^3 \left((2 k-5) k^2+3\right) x^7-(2 k+1)^3 (8 k+3) x^8+(k-1) (2 k+1)^5 x^9.
	\end{array}
\]
It is easy to see that $\text{Dis}\left( P_{5,k}^{(3)}\right) <0$ and, since $P_{5,k}^{(3)}(x)$ has degree $6,$ we conclude that $P_{5,k}^{(3)}(x)$ has at most 4 real zeros, counting multiplicity. In addition, $\lim_{x \to \pm\infty }P_{5,k}^{(3)}(x)>0,$ $P_{5,k}^{(3)}(-1/2)<0,$ $P_{5,k}^{(3)}(0)>0,$ and   $P_{5,k}^{(3)}(1/2)<0.$
Thus,  $P_{5,k}^{(3)}(x)$  has two zeros in $(-\infty,0)$ and two zeros in $(0,\infty)$. Therefore, $P_{5,k}^{(2)}(x)$ has at most 3 zeros in $(0,\infty)$, counting multiplicity. Since \[P_{5,k}^{(2)}(0)>0 \text{ and }\lim_{x \to \infty }P_{5,k}^{(2)}(x)>0,\] it follows that $P_{5,k}^{(2)}(x)$ has at most two positive zeros, counting multiplicity. Moreover,\[P_{5,k}(0)<0 \text{ and }\lim_{x \to \infty }P_{5,k}(x)>0.\] Thus,  $P_{5,k}(x)$ has at most 3 zeros, counting multiplicity in $(0,\infty)$.  From Theorem \ref{t3} and Remark \ref{remark}, we get that $g_k(x;0)$ has 4 simple zeros on $(0, \infty)$. Hence,  $g_k(x;0)$ has 4 simple zeros in $(0, 2)$.  

Thus, we have proven that, for each $k>1,$ $g_k(x;0)$ has at least 4 simple zeros in $(0,2).$ Since $g_k(x;a)$ depends continuously on $a$, for each $k>1$ there exists $\delta_k>0$ such that $g_k(x;a)$ has at least 4 simple zeros in $(0,2)$, for every $a\in B(0, \delta_k).$

Now, we prove that, for each integer $k>1,$ there exists $a_k\in B(0, \delta_k)$ such that $g_k(x;a_k)$ has 5 additional simple zeros in $(2,\infty).$
For that, taking $x = y^{-1}$ in $(0, \infty)$, we see that 
$$g_k(y^{-1};a)=\dfrac{1}{y^{3 + 16 k}} h_k(y;a),$$
where $h_k(y;a),$ around $y=0,$ writes
\[
\begin{array}{ll}
h_k(y;a)=& a_4+a_2 y^{2k}+a_3 y^{2k+1}+\dfrac{2 a_4(2+k)}{1+2k} y^{4k}-a_0 y^{4k+1}+\dfrac{3 a_2}{2k+1} y^{6k}+a_1 y^{6k+1}+ y^{6k+3}\\
&+O(y^{6 k+4}).\\
\end{array}
\]
Thus, for each integer $k>1,$ we can choose  $a\in B(0, \delta_k)$ so that $h_k(y;a)$ has $5$ simple positive zeros in a neighbourhood of $y=0.$ Consequently, $g_k(x;a)$ has 5 additional simple positive zeros in a neighbourhood of the infinity. Hence, we found a function in $\F_7^{k,1}$ having at least 9 simple zeros.
\end{proof}

\section{First order analysis}\label{sec:first}
This section is devoted to the proof of statement $(i)$ of Theorem \ref{Theorem-Melnikov2}. In order to apply Theorem \ref{thm:melnikov}, we first write system \eqref{general-system1} in polar coordinates $x= r\cos(\theta)$ and $y=r\sin(\theta),$
\begin{equation}\label{polar-system}
(\dot{r}, \dot{\theta})^T=(0,-1)^T+\displaystyle\sum_{i=1}^6 \varepsilon^i G_i(\theta, r),
\end{equation}
where
\[
G_i(r)=\left\lbrace 
\begin{array}{lll}
(A_i^+(r, \theta), B_i^+(r, \theta))^T,& \text{if}& \sin (\theta)-r^{n-1} \cos^n(\theta)>0,\\
(A_i^-(r, \theta), B_i^-(r, \theta))^T, & \text{if}& \sin (\theta)-r^{n-1} \cos^n(\theta)<0,\\
\end{array}\right. 
\]
with
\[
\begin{array}{l}
A_i^+=\cos (\theta ) (a_{0i}+r (a_{2i}+b_{1i}) \sin (\theta ))+a_{1i} r \cos ^2(\theta )+\sin (\theta ) (b_{0i}+b_{2i} r \sin (\theta )),\\
B_i^+= r^{-1}[-\sin (\theta ) (a_{0i}+a_{2i} r \sin (\theta ))+\cos (\theta ) (r (b_{2i}-a_{1i}) \sin (\theta )+b_{0i})+b_{1i} r \cos ^2(\theta )],\\
A_i^-=\cos (\theta ) (\alpha_{0i}+r (\alpha_{2i}+\beta_{1i}) \sin (\theta ))+\alpha_{1i} r \cos ^2(\theta )+\sin (\theta ) (\beta_{0i}+\beta_{2i} r \sin (\theta )),\\
B_i^-= r^{-1}[-\sin (\theta ) (\alpha_{0i}+\alpha_{2i} r \sin (\theta ))+\cos (\theta ) (r (\beta_{2i}-\alpha_{1i}) \sin (\theta )+\beta_{0i})+\beta_{1i} r \cos ^2(\theta )].
\end{array}
\]
Then, taking $\theta$ as the new time, system \eqref{polar-system} writes as
\begin{equation}\label{polar-system2}
\dfrac{\text{d} r}{\text{d}\theta}=\left\lbrace 
\begin{array}{lll}
\dfrac{\displaystyle\sum_{i=1}^6\varepsilon^i A^+_i(r, \theta)}{-1+\displaystyle\sum_{i=1}^6 \varepsilon^i B_i^+(r, \theta)},& \text{if}& \sin (\theta)-r^{n-1} \cos^n(\theta)>0,\\
\\
\dfrac{\displaystyle\sum_{i=1}^6\varepsilon^i A^-_i(r, \theta)}{-1+\displaystyle\sum_{i=1}^6 \varepsilon^i B_i^-(r, \theta)}, & \text{if}& \sin (\theta)-r^{n-1} \cos^n(\theta)<0.
\end{array}\right. 
\end{equation}
Thus, for $|\varepsilon|\neq 0$  sufficiently small, system \eqref{polar-system2}  and, consequently, system \eqref{polar-system} become equivalent to
\begin{equation} \label{polar-system3}
\dfrac{\text{d} r}{\text{d}\theta}=\left\lbrace 
\begin{array}{lll}
\displaystyle\sum_{i=1}^6\varepsilon^i F_i^+(r, \theta)+ \mathcal{O}(\varepsilon^7), & \text{if}& \sin (\theta)-r^{n-1} \cos^n(\theta)>0,\\
\displaystyle\sum_{i=1}^6\varepsilon^i F_i^-(r, \theta)+ \mathcal{O}(\varepsilon^7), & \text{if}& \sin (\theta)-r^{n-1} \cos^n(\theta)<0,\\
\end{array}\right. 
\end{equation}
where
\[
\begin{array}{l}
F_1^+(r,\theta) = - \cos (\theta ) (a_{01}+r (a_{21}+b_{11}) \sin (\theta ))-a_{11} r \cos ^2(\theta )-\sin (\theta ) (b_{01}+b_{21} r \sin (\theta )),\\
F_1^-(r,\theta) = -\cos (\theta ) (\alpha_{01}+r (\alpha_{21}+\beta_{11}) \sin (\theta ))-\alpha_{11} r \cos ^2(\theta )-\sin (\theta ) (\beta_{01}+\beta_{21} r \sin (\theta )).
\end{array}
\]
 Let $\theta_1(r)=\arctan(r^{n-1})$  be the solution of the equation $\sin \theta-r \cos ^{n-1} \theta=0$ in $[0, \pi/2].$ Thus, for $r>0,$ $\sin \theta-r \cos ^{n-1} \theta<0$  if and only if  $0<\theta<\theta_{1}(r)$ or $\pi-(-1)^n\theta_1(r)<\theta<2 \pi$;  and $\sin \theta-r \cos ^{n-1} \theta>0$  if and only if  $\theta_{1}(r)<\theta< \pi-(-1)^n\theta_{1}(r)$.

According to \eqref{d1}, the first order Melnikov function of system \eqref{polar-system3} is provided by
\begin{equation}
\label{MelFun}
M_1(r)=\int_0^{\theta_1(r)} F_1^-(\theta, r) d \theta+ \int^{\pi-(-1)^{n}\theta_1(r)}_{\theta_1(r)} F_1^+(\theta, r) d \theta+ \int_{\pi-(-1)^{n}\theta_1(r)}^{2\pi} F_1^-(\theta, r) d \theta.
\end{equation}
In order to compute the exact expression of the Melnikov function \eqref{MelFun} we distinguish two  cases, depending on $n$.

\subsection*{Case 1:} Let $n=2k+1$ for a positive integer $k.$ Thus,  
\[
\begin{array}{l}
M_1(r)=\dfrac{1}{2} (v_0 \cos (\theta_1 (r))+r v_1+v_2 \sin (\theta_1(r))),
\end{array}
\]
where
\[
\begin{array}{rcl}
v_0&=&4 \beta_{01}-4 b_{01},\\
v_1&=& -\pi  (a_{11}+ \alpha_{11}+b_{21}+\beta_{21}),\\
v_2&=& 4 (a_{01}-\alpha_{01}).
\end{array}
\]
Notice that the parameter vector $(v_0,v_1,v_2)\in\R^3$ depends on the original parameters in a surjective way. Taking $x=r \cos(\theta_1(r)),$ it follows that
$$x^2+x^{4k+2}=r^2\,\text{ and } \, \sin(\theta_1(r))=\frac{x^{2 k+1}}{r}. $$
Hence,  $M_1(r)=\dfrac{q_1^k(x)}{2 \sqrt{x^{4k}+1}}$, where
\begin{equation*}\label{pol1}
q_1^k(x)=v_1 u_{12}^k(x)+v_2 u_4^k(x)+v_0 u_1^k(x).
\end{equation*}
which belongs to $\mathcal{F}_1^k.$
So, the maximum number of positive zeros of the polynomial function $q_1^k(x)$ coincides with $m_1(2k+1)$.

Note that $q_1^0(x)$ is a first degree polynomial, thus the maximum number of positive simple zeros is $1$. For $k\geq 1$, Proposition \ref{pro2} implies that $\mathcal{F}_1^k$ is an ET-system with accuracy 1 on $[a,b]$ for any $0<a<b.$ Thus, the maximum number of positive simple zeros of $q_1^k(x)$ is 3 and there exists $(v_0,v_1,v_2)\in\R^3$ for which $q_1^k(x)$ has exactly 3 positive simple zeros. Therefore, $m_1(1)=1$ and $m_1(2k+1)=3$ for $k\geq 1$. 

\subsection*{Case 2:}  Let $n=2k$ for a positive integer $k.$ Thus, 
\[
\begin{array}{l}
M_1(r)=r v_0+r v_1 \sin (\theta_1(r)) \cos (\theta_1(r))+r v_2 \theta_1(r)+v_3 \cos (\theta_1(r)),
\end{array}
\]
where
\[
\begin{array}{rcl}
v_0&=&-\dfrac{\pi  (a_{11}+\alpha_{11}+b_{21}+\beta_{21})}{2},\\
v_1&=&a_{11}-\alpha_{11}-b_{21}+\beta_{21},\\
v_2&=& a_{11}-\alpha_{11}+b_{21}-\beta_{21},\\
v_3&=&2 (\beta_{01}-b_{01}).
\end{array}
\]
Notice that the parameter vector $(v_0,v_1,v_2,v_3)\in\R^4$ depends on the original parameters in a surjective way. Again, taking $x=r \cos(\theta_1(r)),$ it follows that $ M_1(r)=\dfrac{q_2^k(x)}{ \sqrt{x^2 + x^{4 n}}}$, where
\begin{equation*}\label{q2}
q_2^k(x)=v_0 u_{13}^k(x)+v_1 u_{5}^k(x)+v_2 u_{15}^k(x)+v_3 u_{2}^k(x),
\end{equation*}
which belongs to $\mathcal{F}_2^k.$ From Proposition \ref{pro1}, $\mathcal{F}_2^1$  is an  ECT-system on $[a,b]$ for any $0<a<b.$ Thus, the maximum number of positive simple zeros of $q_2^1(x)$ is 3 and there exists $(v_0,v_1,v_2,v_3)\in\R^4$ for which $q_2^1(x)$ has exactly 3 positive simple zeros. Therefore, $m_1(2)=3$. For $k\geq 2$, from Proposition \ref{pro2},$\mathcal{F}_2^k$  is an  ET-system with accuracy $1$ on $[a,b]$ for any $0<a<b.$ So, the maximum number of positive simple zeros of $q_2^k(x)$ is $4$ and there exists $(v_0,v_1,v_2,v_3)\in\R^4$ for which $q_2^k(x)$ has exactly $4$ positive simple zeros. Therefore, $m_1(2k)=4$ for $k\geq 2$.

\section{Higher order analysis}\label{sec:higher}

This section is devoted to the proof of statements $(ii)$-$(v)$ of Theorem \ref{Theorem-Melnikov2} for $2\leq l\leq 6$. From Theorem \ref{thm:melnikov},  the simple zeros of the Melnikov function of order $\ell$, $M_{\ell},$ provide periodic solutions of \eqref{polar-system3} whenever $M_i=0$, for $i=1,\ldots,\ell-1$. In our problem, it can be seen that, for each $n\in\mathbb{N}$ and $\ell\in\{2,\ldots,6\}$, there exists $\ell-1$ set of minimal conditions on the parameters of perturbations, $K^n_{\ell,1},\dots,K^n_{\ell,\ell-1}$, such that $M_i(x)=0$ for $i\in\{1,\ldots,\ell-1\}$. In order to obtain $m_{\ell}(n),$ we have to study $M_{\ell}$ for each set of condition. By assuming conditions $K^n_{\ell,i},$ It can be seen that $M_{\ell}=M_{\ell,i}^n,$ where
\[
M_{\ell,i}^n(x)=\dfrac{p_{\ell,i}^n(x)}{q_{\ell,i}^n(x)},
\] 
with $q_{\ell,i}^n(x)\neq 0$ in $(0,\infty)$ and
\begin{table}[h]
	\begin{tabular}{|c|l|l|}
		\hline
		$n=2$& $p_{\ell,i}^n(x)\in \operatorname{Span}(\mathcal{F}_{3}^{1})$ &$ \ell=2,\dots,6\text{ and } i=1,\dots,\ell-1 $\\ \hline
		$n=2k$& $p_{\ell,i}^n(x)\in \operatorname{Span}(\mathcal{F}_{6}^{k})$ & $\ell=2,\dots,6\text{ and } i=1,\dots,\ell-1$\\ \hline
		\multirow{3}{*}{$n=2k+1$} 	&$p_{\ell,i}^n(x)\in \operatorname{Span}(\mathcal{F}_{5}^{k}) $& $\ell=2,\dots,5\text{ and } i=1,\dots,\ell-1$\\ \cline{2-3} 
		&$p_{6,i}^n(x) \in \operatorname{Span}(\mathcal{F}_{5}^{k}) $&$ i=1,\dots,4$\\ \cline{2-3} 
		&$p_{6,5}^n(x)\in \operatorname{Span}(\mathcal{F}_{7}^{k,\lambda})$&  \\ \hline
\end{tabular}
\bigskip
\caption{Structure of the higher order Melnikov functions.}\label{table3}
\end{table}

\subsection*{Case 1: } Let $n=2$ and $\ell\in\{2,\dots, 6\}.$ Assuming conditions on the parameters of perturbations such that $M_i=0$ for $i\in\{1,\ldots,\ell-1\},$ the Melnikov function of order $\ell$ is provided by
\[\label{mi2}
M_\ell(x)=\dfrac{1}{(1+2 x^2)^2}P_\ell(x),
\]
where
\[P_\ell(x)=
C_0^\ell u_1^1(x)+C_1^\ell u_4^1(x)+C_2^\ell u_9^1(x)+C_3^\ell u_{16}^1(x)+C_4^\ell u_{17}^1(x),
\]
which belongs to $\operatorname{Span}(\mathcal{F}_{3}^{1})$ (see Table \ref{table3}). In addition, it can be seen that the parameter vector $(C_0^\ell,\ldots,C_4^{\ell})\in\R^5$ depends on the original coefficients of perturbation in a surjective way. From Proposition \ref{pro1}, $\operatorname{Span}(\mathcal{F}_{3}^{1})$ is an ECT-systems on $[a,b]$ for any $0<a<b.$ Thus, we conclude that the maximum number of positive simple zeros of $P_\ell(x)$ is 4 and there exists $(C_0^\ell,\ldots,C_4^{\ell})\in\R^5$ for which $P_\ell(x)$ has exactly 4 positive simple zeros. Therefore, $m_{\ell}(2)=4$ for ${\ell}=2,\dots,6$.
 
\subsection*{Case 2:} Let $n=2k$, $k>1$, and $\ell\in\{2,\dots, 6\}.$  Assuming conditions on the parameters of perturbations such that $M_i=0$ for $i\in\{1,\ldots,\ell-1\},$ the Melnikov function of order $\ell$ is provided by
\begin{equation*}\label{mi2k}
M_\ell(x)=\dfrac{1}{(1+ 2k x^{4k-2})^2}Q_\ell^k(x),
\end{equation*}
where
 \[Q_\ell(x)=
C_0^\ell u_1^k(x)+C_1^\ell u_4^k(x)+C_2^\ell u_9^k(x)+C_3^\ell u_{6}^k(x)+C_4^\ell u_{3}^k(x)+C_5^\ell u_{16}^k(x)+C_6^\ell u_{17}^k(x),
\]
which belongs to $\operatorname{Span}(\mathcal{F}_{6}^{k})$ (see Table \ref{table3}). In addition, it can be seen that the parameter vector $(C_0^\ell,\ldots,C_6^{\ell})\in\R^7$ depends on the original coefficients of perturbation in a surjective way. From Propositions \ref{pro2} and \ref{pro3}, $\operatorname{Span}(\mathcal{F}_{6}^{k})$ is an ET-system with accuracy 1 on $[a,b]$ for any $0<a<b.$ Thus, we conclude that the maximum number of positive simple zeros of $Q_\ell^k(x)$ is 7 and there exists $(C_0^\ell,\ldots,C_6^{\ell})\in\R^7$ for which $Q_\ell^k(x)$ has exactly 7 positive simple zeros. Therefore,  $m_{\ell}(2k)=7$ for $k>1$ and $\ell=2,\dots 6$.

\subsection*{Case 3: } Let $n=2k+1$, $k>0$ and $\ell\{2,\dots, 5\}.$  Assuming conditions on the parameters of perturbations such that $M_i=0$ for $i\in\{1,\ldots,\ell-1\},$ the Melnikov function of order $\ell$ is provided by
\begin{equation*}\label{mi2k+1}
M_\ell(x)=\dfrac{1}{(1+(1+2k)x^{4k})^2}R_\ell^k(x),
\end{equation*}
where
\[\begin{array}{rcl}
R_\ell^k(x)&=&
C_0^\ell u_1^k(x)+C_1^\ell u_4^k(x)+C_2^\ell u_7^k(x)+C_3^\ell u_{8}^k(x)+C_4^\ell u_{10}^k(x)+C_5^\ell u_{5}^k(x)+C_6^\ell u_{11}^k(x)\\
& &+C_7^\ell u_{14}^k(x),
\end{array}
\]
which belongs to $\operatorname{Span}(\mathcal{F}_{5}^{k})$ (see Table \ref{table3}). In addition, it can be seen that the parameter vector $(C_0^\ell,\ldots,C_7^{\ell})\in\R^8$ depends on the original coefficients of perturbation in a surjective way. From Propositions \ref{pro1}, $\operatorname{Span}(\mathcal{F}_{5}^{k})$ is an ECT-system on $[a,b]$ for any $0<a<b.$ Thus, we conclude that the maximum number of positive simple zeros of $R_\ell^k(x)$ is 7 and there exists $(C_0^\ell,\ldots,C_7^{\ell})\in\R^8$ for which $R_\ell^k(x)$ has exactly 7 zeros. Therefore, $m_\ell(2k+1)=7$ for $k>0$ and $\ell=1,\dots,5$.

\subsection*{Case 4: }  Let $n=2k+1$, $k>0,$ and $\ell=6.$ Assuming conditions on the parameters of perturbations such that $M_i=0$ for $i\in\{1,\ldots,5\},$ the Melnikov function of order $6$ has two possible forms (see Table \ref{table3}). The first one has its numerator as a linear combination of functions in $\mathcal{F}_{5}^{k},$ which, from Propositions \ref{pro1}, has at most $7$ positive simple zeros.
The second one is provided by
\[\label{2m62k+1}
M_6(x)=\dfrac{L^k(x^{2k})}{x^2 (1+(1+2k)x^{4k})^2},
\]
where
$$L^k(x)=C_0u_{18}^k(x)+C_1u_{19}^k(x)+C_2u_{20}^k(x)+C_3u_{21}^k(x)+C_4u_{22}^k(x)+C_5u_{23}^k(x)+C_6u_{24}^{k,\lambda}(x),$$
which belongs to  $\operatorname{Span}(\mathcal{F}_{7}^{k,\lambda})$ (see Table \ref{table3}).  In addition, it can be seen that the parameter vector $(C_0^\ell,\ldots,C_6^{\ell})\in\R^7$ depends on the original coefficients of perturbation in a surjective way. For $k=1$, Proposition \ref{pro4} provides that $L^1(x)$ has at most $10$ positive simple zeros and that there exists $(C_0^\ell,\ldots,C_6^{\ell})\in\R^7$ such that $L^1(x)$ has at least $8$ positive simple zeros. For $k>1,$ Proposition \ref{pro5} provides that  $L^k(x)$ has at most $14$ positive simple zeros and that there exists $(C_0^\ell,\ldots,C_6^{\ell})\in\R^7$ such that $L^K(x)$ has at least $9$ positive simple zeros. Therefore, $8\leq m_{6}(3)\leq 10$ and, for $k>1,$ $9\leq m_{6}(2k+1)\leq 14.$

\bigskip

Hence, we have concluded the proof of Theorem \ref{Theorem-Melnikov2}.

\section*{Appendix: Proof of Theorem \ref{thm:melnikov}}

Let $\varphi(t,x,\varepsilon)$ denote the solution of the $T$-periodic nonsmooth differential system \eqref{general-system2} with initial condition $\varphi(0,x,\varepsilon)=x.$ 
Let $\alpha_j(x,\varepsilon)$ denote the smallest positive time for which the trajectory $\varphi_{j-1}(\cdot,x,\varepsilon)$, starting at $\varphi_{j-1}(\alpha_{j-1}(x,\varepsilon),x,\varepsilon)\in D$, reaches the manifold $\lbrace (\theta_j(x),x) : x\in D\rbrace\subset \Sigma$. In this way
\begin{equation}\label{defalphaj}
\alpha_j(x,\varepsilon)=\theta_j(\varphi_{j-1}(\alpha_j(x,\varepsilon),x,\varepsilon)),
\end{equation}
for $j=1,\dots,N$. For the sake of completeness, denote $\alpha_0(x,\varepsilon)=0$.
Thus, 
\begin{equation}\label{solEdo}
\varphi(t,x,\varepsilon)=\left\lbrace\begin{array}{cc}
\varphi_0(t,x,\varepsilon),& 0\leq t\leq\alpha_1(x,\varepsilon),\\
\varphi_1(t,x,\varepsilon),&\alpha_1(x,\varepsilon)\leq t\leq\alpha_2(x,\varepsilon),\\
\vdots&\vdots\\
\varphi_N(t,x,\varepsilon),&\alpha_N(x,\varepsilon)\leq t\leq T,
\end{array}\right.
\end{equation}
where
\[\label{edo2}
\left\lbrace\begin{array}{ll}
\dfrac{\partial \varphi_j}{\partial t}(t,x,\varepsilon)=F^j(t,\varphi_j(t,x,\varepsilon),\varepsilon),& \text{for } j=0,\dots,N,\\
\varphi_0(0,x,\varepsilon)=x,\\
\varphi_j(\alpha_j(x,\varepsilon),x,\varepsilon)=\varphi_{j-1}(\alpha_j(x,\varepsilon),x,\varepsilon), &\text{for }  j=1,\dots,N.
\end{array}\right.
\]

The recurrence above describes initial value problems, which are equivalent to the following integral equations:
\begin{equation}\label{pviRec}
\left\lbrace\begin{array}{l}
\varphi_0(t,x,\varepsilon)=x+\displaystyle\int_0^tF^0(s,\varphi_0(s,x,\varepsilon),\varepsilon)ds,\\
\varphi_j(t,x,\varepsilon)=\varphi_{j-1}(\alpha_{j}(x,\varepsilon),x,\varepsilon)+\displaystyle\int_{\alpha_{j}(x,\varepsilon)}^t F^j(x,\varphi_j(s,x,\varepsilon),\varepsilon)ds, \text{ for } j=1,\dots,N.
\end{array}\right.
\end{equation}

Now, consider the displacement function
\begin{equation}\label{Delta}
	\Delta(x,\varepsilon)=\varphi(T,x,\varepsilon)-x.
\end{equation}
By denoting $z_i^j(t,x)=\dfrac{\p^i \varphi_j}{\p \e^i}(t,x,0),$ we expand $\varphi_j(t, x, \varepsilon)$, around $\varepsilon= 0$ up to power $k$, we get that
\begin{equation}\label{varphiexp}
\varphi_j(t,x,\varepsilon)=x+\displaystyle\sum_{i=1}^k\dfrac{\varepsilon^i}{i!}z_i^j(t,x)+O(\varepsilon^{k+1}),
\end{equation}
and, consequently,
\begin{equation*}\label{Delta2}
	\begin{array}{rl}
	\Delta(x,\varepsilon)=\displaystyle\sum_{i=1}^k\dfrac{\varepsilon^i}{i!}z_i^N(T,x)+O(\varepsilon^{k+1}).
	\end{array}
\end{equation*}

Hence, the Melnikov function of order $i$ is provided by $$M_i(x)=\dfrac{1}{i!}z_i^N(T,x).$$ Indeed, from \eqref{Delta}, it is clear that  $T$-periodic solutions $\f(t,x,\varepsilon)$ of system \eqref{general-system2}, satisfying $x(0,x,\varepsilon) = x$, are in one-to-one correspondence to the zeros of the equation $\Delta(x,\varepsilon) = 0$. 
	From hypothesis, 
	\[
	\widehat{\Delta}(x,\varepsilon):=\dfrac{\Delta(x,\varepsilon)}{\varepsilon^\ell} = M_\ell(x)+\CO(\e^{\ell+1}),
	\]
	$\widehat{\Delta}(a^*,0) = M_l(a^*) = 0$, and $\det\left(\dfrac{\partial\widehat{\Delta}}{\partial x}(a^*,0)\right) = \det(DM_\ell(a^*)) \neq 0$.
	Therefore, based on the Implicit Function Theorem, we get the existence of a unique $C^k$ function $a(\varepsilon)\in D,$ defined for $|\varepsilon| \neq 0$ sufficiently small, such that $a(0) = a^*$ and $\Delta(a(\varepsilon),\varepsilon) = \widehat{\Delta}(a(\varepsilon), \varepsilon) = 0$.

We conclude the proof of Theorem  \ref{thm:melnikov} by showing in Proposition \ref{teoremaztil} that the functions $z_i^j(t,x)$ are provided by \eqref{ztilde}, \eqref{alphaj}, and \eqref{omegaj}. For this, we need the following technical lemma: 
\begin{lemma}\label{Ql}
	Let $Q_l:\mathbb{R}^d\times\dots\times\mathbb{R}^d\rightarrow\mathbb{R}^d$ be a $l$-multilinear map. Then,
	\begin{equation}\label{Q}
		Q_l\left(\displaystyle\sum_{i=1}^k\varepsilon^i x_i\right)^l=\displaystyle\sum_{p=l}^{kl}\varepsilon^p\displaystyle\sum_{u\in S_{p,l}}Q_l\left(\prod_{r=1}^l x_{u_r}\right),
	\end{equation}
	where $S_{p,l}=\left\lbrace(u_1,\dots,u_l)\in (\mathbb{Z}^+)^l: u_1+\dots+u_l=p\right\rbrace$. 
\end{lemma}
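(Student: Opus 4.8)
The plan is to establish \eqref{Q} by expanding the left-hand side using the multilinearity of $Q_l$ and then regrouping the resulting finite sum according to the total power of $\varepsilon$. Write $v=\sum_{i=1}^k\varepsilon^i x_i$, so that $Q_l\bigl(\sum_{i=1}^k\varepsilon^i x_i\bigr)^l$ denotes $Q_l(v,\dots,v)$ with $v$ appearing in all $l$ arguments.

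First I would apply multilinearity of $Q_l$ one slot at a time. This produces
$$Q_l(v,\dots,v)=\sum_{i_1=1}^{k}\cdots\sum_{i_l=1}^{k}\varepsilon^{\,i_1+\cdots+i_l}\,Q_l\bigl(x_{i_1},\dots,x_{i_l}\bigr),$$
a finite sum in which the indices $i_1,\dots,i_l$ range independently over $\{1,\dots,k\}$.

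Next I would collect the monomials by the exponent $p:=i_1+\cdots+i_l$. Since each $i_r$ satisfies $1\le i_r\le k$, the exponent $p$ takes precisely the integer values between $l$ and $kl$; and for a fixed such $p$, the $l$-tuples $(i_1,\dots,i_l)$ that contribute the monomial $\varepsilon^p$ are exactly the elements of $S_{p,l}=\{(u_1,\dots,u_l)\in(\mathbb{Z}^+)^l:\ u_1+\cdots+u_l=p\}$. Relabelling the summation variable accordingly yields
$$Q_l\Bigl(\sum_{i=1}^k\varepsilon^i x_i\Bigr)^l=\sum_{p=l}^{kl}\varepsilon^p\sum_{u\in S_{p,l}}Q_l\bigl(x_{u_1},\dots,x_{u_l}\bigr)=\sum_{p=l}^{kl}\varepsilon^p\sum_{u\in S_{p,l}}Q_l\Bigl(\prod_{r=1}^{l}x_{u_r}\Bigr),$$
which is \eqref{Q}.

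I do not anticipate any genuine obstacle here: the lemma is a purely combinatorial reorganization of a finite multilinear expansion. The only point that warrants a moment of care is pinning down the summation range correctly — in particular that the lowest exponent is $l$ (because every $i_r\ge 1$) and the highest is $kl$ — and checking that the index set $S_{p,l}$ of positive-integer compositions is exactly what arises after regrouping. This identity will then be invoked in the proof of Proposition~\ref{teoremaztil} when expanding the composition terms of the form $\partial_x^{L_b}F_{i-l}^j$ evaluated along the $\varepsilon$-power series \eqref{varphiexp} of $\varphi_j(t,x,\varepsilon)$.
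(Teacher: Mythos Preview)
Your argument is correct. You expand $Q_l(v,\dots,v)$ directly by multilinearity in all $l$ slots at once, obtaining the $l$-fold sum $\sum_{i_1,\dots,i_l}\varepsilon^{i_1+\cdots+i_l}Q_l(x_{i_1},\dots,x_{i_l})$, and then simply regroup by the total exponent $p=i_1+\cdots+i_l$; the bounds $l\le p\le kl$ and the identification of the index set with $S_{p,l}$ are exactly as you state. The paper, by contrast, proceeds by induction on $l$: it freezes the last argument, applies the inductive hypothesis to the resulting $l$-linear map $\widetilde Q_l(y_1,\dots,y_l)=Q_{l+1}(y_1,\dots,y_l,\sum_i\varepsilon^i x_i)$, and then reindexes a double sum, finally invoking the disjoint decomposition $S_{q,l+1}=\dot\bigcup_{i=1}^{q-l}S_{i,q,l+1}$. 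Your route is shorter and more transparent---the lemma really is nothing more than a bookkeeping identity---while the paper's inductive argument makes the passage from $l$ to $l+1$ explicit at the cost of extra notation. One minor inaccuracy in your closing remark: in the paper the lemma is not used to expand the terms $\partial_x^{L_b}F_{i-l}^j$ along \eqref{varphiexp} (that step uses Fa\`a di Bruno's formula), but rather in the proof of Claim~\ref{claim1} to expand $D^l\theta_j(x)\bigl(\sum_i\varepsilon^i w_i^j(x)\bigr)^l$ when computing $\alpha_j^q(x)$.
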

\begin{proof}
	The proof of this result will follow by induction on $l$. It's easy to see that for $l=1$ the result holds. Suppose by induction hypothesis that \eqref{Q} holds for $l$-multlinear maps. Then, define the $l$-multlinear map 
$$\widetilde{Q}_{l}(y_1,\dots,y_{l})=Q_{l+1}\left(y_1,\dots,y_{l},\displaystyle\sum_{i=1}^k\varepsilon^ix_i\right).$$%}
	Notice that
	$$Q_{l+1}\left(\displaystyle\sum_{i=1}^k\varepsilon^i x_i\right)^{l+1} =\widetilde{Q}_{l}\left(\displaystyle\sum_{i=1}^k\varepsilon^i x_i\right)^{l}.$$
	Thus, applying the induction hypothesis for $\widetilde{Q}_l$ in the equality above, we have
	\[
	\begin{array}{rl}
	Q_{l+1}\left(\displaystyle\sum_{i=1}^k\varepsilon^i x_i\right)^{l+1} 
	&=\displaystyle\sum_{p=l}^{kl}\varepsilon^p\displaystyle\sum_{u\in S_{p,l}} Q_{l+1}\left(\displaystyle\prod_{r=1}^{l} x_{u_r},\displaystyle\sum_{i=1}^k\varepsilon^i x_i\right)\\
		&=\displaystyle\sum_{i=1}^k\displaystyle\sum_{p=l}^{kl}\varepsilon^{p+i}\displaystyle\sum_{u\in S_{p,l}}Q_{l+1}\left(\displaystyle\prod_{r=1}^{l} x_{u_r}, x_i\right)\\
	&=\displaystyle\sum_{i=1}^k\displaystyle\sum_{q=l+i}^{kl+i}\varepsilon^{q}\displaystyle\sum_{u\in S_{q-i,l}}Q_{l+1}\left(\displaystyle\prod_{r=1}^{l} x_{u_r}, x_i\right)\\
	&=\displaystyle\sum_{q=l+1}^{k(l+1)}\varepsilon^{q}\displaystyle\sum_{i=1}^{q-l}\displaystyle\sum_{u\in S_{q-i,l}}Q_{l+1}\left(\displaystyle\prod_{r=1}^{l} x_{u_r}, x_i\right).\\
	\end{array}
\]
Considering $S_{i,q,l+1}=\lbrace(v_1,\dots,v_l,i);(v_1,\dots,v_l)\in S_{q-i,l}\rbrace $, we get
		\begin{equation*}\label{qaux}
		Q_{l+1}\left(\displaystyle\sum_{i=1}^k\varepsilon^i x_i\right)^{l+1} =\displaystyle\sum_{q=l+1}^{k(l+1)}\varepsilon^{q}\displaystyle\sum_{i=1}^{q-l}\displaystyle\sum_{v\in S_{i,q,l+1}}Q_{l+1}\left(\displaystyle\prod_{r=1}^{l+1} x_{v_r}\right).
	\end{equation*}
Thus, since
	\begin{equation*}\label{sss}
	S_{q,l+1}=\overset{q-l}{\underset{i=1}{\dot{\bigcup}}}S_{i,q,l+1},
	\end{equation*}
we conclude that
	$$
	Q_{l+1}\left(\displaystyle\sum_{i=1}^k\varepsilon^i x_i\right)^{l+1} =\displaystyle\sum_{q=l+1}^{k(l+1)}\varepsilon^{q}\displaystyle\sum_{{v}\in S_{q,l+1}}Q\left(\displaystyle\prod_{r=1}^{l+1} x_{v_r}\right),
	$$
	which finishes this proof.
\end{proof}

\begin{proposition}\label{teoremaztil}
	The functions $z_i^j(t,x)$ are provided by \eqref{ztilde}, \eqref{alphaj}, and \eqref{omegaj}.
\end{proposition}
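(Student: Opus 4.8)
The plan is to establish the proposition by induction on the piece index $j\in\{0,1,\dots,N\}$, with a nested induction on the perturbation order $i\in\{1,\dots,k\}$ inside the inductive step. I will repeatedly use that $z_i^j(t,x)=\partial_\varepsilon^i\varphi_j(t,x,0)$, so $\varphi_j(t,x,\varepsilon)=x+\sum_{i\ge1}\frac{\varepsilon^i}{i!}z_i^j(t,x)+\mathcal{O}(\varepsilon^{k+1})$; the $\mathcal{C}^k$ regularity of the $F_i^j$ and $R^j$ (and $\theta_j\in\mathcal{C}^{k-1}$) makes this Taylor expansion and the manipulations below legitimate. Two elementary facts are used throughout. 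First, since $F^j(\cdot,\cdot,0)\equiv 0$, the map $t\mapsto\varphi_j(t,x,0)$ is constant, equal to $x$; hence $z_0^j\equiv x$ and $\delta_0^j\equiv 0$. Second, applying the Implicit Function Theorem to $\alpha-\theta_j(\varphi_{j-1}(\alpha,x,\varepsilon))=0$ — whose $\alpha$-derivative at $\varepsilon=0$ equals $1-D\theta_j(x)\,\partial_t\varphi_{j-1}(\theta_j(x),x,0)=1$ — produces a $\mathcal{C}^k$ crossing time $\alpha_j(x,\varepsilon)$ with $\alpha_j(x,0)=\theta_j(x)$, i.e.\ $\alpha_j^0(x)=\theta_j(x)$.

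The base case $j=0$ and the ``engine'' of the inner induction are one and the same computation. In the integral equation \eqref{pviRec} for $\varphi_0$ one substitutes the expansion of $\varphi_0$, writes $F^0(s,\varphi_0(s,x,\varepsilon),\varepsilon)=\sum_{l\ge1}\varepsilon^l F_l^0(s,\varphi_0(s,x,\varepsilon))+\mathcal{O}(\varepsilon^{k+1})$, and expands each $F_l^0(s,\varphi_0(s,x,\varepsilon))$ in $\varepsilon$ by the multivariate Fa\`a di Bruno formula — which is precisely Lemma \ref{Ql} applied to the multilinear maps $\partial_x^{L_b}F_l^0(s,x)$. Collecting the coefficient of $\varepsilon^i$ and multiplying by $i!$ yields exactly the integrand in the $z_i^0$ line of \eqref{ztilde}, the constants $1/(b_1!\,b_2!\,2!^{b_2}\cdots b_l!\,l!^{b_l})$ being the Fa\`a di Bruno weights with the factors $1/m!^{b_m}$ absorbing the $1/m!$ carried by each $\frac{\varepsilon^m}{m!}z_m^0$; the remainder only touches order $k+1$. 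For $j\ge1$ the very same device, applied to $\int_{\theta_j(x)}^{t}F^j(s,\varphi_j(s,x,\varepsilon),\varepsilon)\,ds$ and using the already-known $z_1^j,\dots,z_{i-1}^j$ from the inner induction hypothesis, produces the integral term of the $z_i^j$ line of \eqref{ztilde}. It therefore remains only to identify the boundary value $z_i^j(\theta_j(x),x)$ (which coincides with the whole right-hand side of that line at $t=\theta_j(x)$, the integral being empty).

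Next I would settle the crossing-time expansions \eqref{omegaj} and \eqref{alphaj}, which involve only $\theta_j$ and the $z_l^{j-1}$ and hence are available at the start of the $j$-th inductive step. Put $\psi_j(x,\varepsilon):=\varphi_{j-1}(\alpha_j(x,\varepsilon),x,\varepsilon)$, the point where the $(j-1)$-st piece reaches $\{(\theta_j(x),x)\}$; by \eqref{defalphaj} it satisfies $\alpha_j(x,\varepsilon)=\theta_j(\psi_j(x,\varepsilon))$, and $\psi_j(x,0)=x$. Taylor-expanding $\varphi_{j-1}(t,x,\varepsilon)$ in its first slot around $t=\theta_j(x)$ and composing with the (still unknown) series $\alpha_j(x,\varepsilon)=\theta_j(x)+\sum_{q\ge1}\frac{\varepsilon^q}{q!}\alpha_j^q(x)+\mathcal{O}(\varepsilon^{k+1})$, a Fa\`a di Bruno computation gives $\psi_j(x,\varepsilon)=x+\sum_{i\ge1}\varepsilon^i w_i^j(x)+\mathcal{O}(\varepsilon^{k+1})$ with $w_i^j$ exactly as in \eqref{omegaj} — the $\partial_t^{L_b}z_{i-a}^{j-1}(\theta_j(x),x)$ being the $t$-derivatives created by evaluating at the moving point $\alpha_j$, with coefficients $\prod_m(\alpha_j^m)^{b_m}$. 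Feeding this into $\alpha_j=\theta_j\circ\psi_j$ and expanding $\theta_j$ via Lemma \ref{Ql} (multilinear maps $D^l\theta_j(x)$, arguments $w_m^j(x)$) gives $\alpha_j^q$ as in \eqref{alphaj}. Although $w_i^j$ calls on $\alpha_j^1,\dots,\alpha_j^{i-1}$ while $\alpha_j^q$ calls on $w_1^j,\dots,w_q^j$, this is a well-founded recursion, resolved in the order $w_1^j,\alpha_j^1,w_2^j,\alpha_j^2,\dots$.

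Finally, for the boundary value $z_i^j(\theta_j(x),x)$ I would use the gap function $G_j(t,x,\varepsilon):=\varphi_{j-1}(t,x,\varepsilon)-\varphi_j(t,x,\varepsilon)$, whose $\varepsilon$-Taylor coefficients are the $\delta_i^j(t,x)$ and which, by the matching condition in \eqref{pviRec}, vanishes identically along $t=\alpha_j(x,\varepsilon)$: $G_j(\alpha_j(x,\varepsilon),x,\varepsilon)\equiv 0$. Expanding this identity in $\varepsilon$ — using $\delta_0^j\equiv 0$ and that $\alpha_j(x,\varepsilon)$ agrees with its truncation $A_j^p(x,\varepsilon)$ through order $\varepsilon^p$ — and isolating, at $\varepsilon$-order $i$, the term $\delta_i^j(\theta_j(x),x)$ (the only one carrying no $t$-derivative, since $\alpha_j-\theta_j=\mathcal{O}(\varepsilon)$), one gets $\delta_i^j(\theta_j(x),x)=-\sum_{p=1}^{i-1}\frac{1}{p!}\partial_\varepsilon^p\big(\delta_{i-p}^j(A_j^p(x,\varepsilon),x)\big)\big|_{\varepsilon=0}$; since $\delta_i^j=\frac{1}{i!}(z_i^{j-1}-z_i^j)$, this is precisely the asserted expression for $z_i^j(\theta_j(x),x)$ in terms of $z^{j-1}$ and of $z_1^j,\dots,z_{i-1}^j$ together with their $t$-derivatives (all controlled by the inner induction hypothesis through the integral term of \eqref{ztilde}). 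Combining this boundary value with the integral term from the second step gives \eqref{ztilde}, closing both inductions. The hard part is purely bookkeeping: checking that the Fa\`a di Bruno weights delivered by Lemma \ref{Ql} match, to the last factorial, the constants $1/(b_1!\,b_2!\,2!^{b_2}\cdots)$, $q!/l!$ and $1/(i-a)!$ displayed in \eqref{ztilde}–\eqref{omegaj}; the analytic content (the Implicit Function Theorem for $\alpha_j$, and the legitimacy of the $\mathcal{C}^k$ Taylor expansions and of differentiating under the integral sign) is routine.
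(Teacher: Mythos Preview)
Your proposal is correct and uses the same core ingredients as the paper's proof: Fa\`a di Bruno for the integrand, Lemma~\ref{Ql} for the multilinear expansion of $\theta_j$, and the implicit function theorem for $\alpha_j$. The organization, however, differs in a way worth noting. The paper keeps the moving lower limit $\alpha_j(x,\varepsilon)$ in the integral throughout and tracks the resulting corrections via auxiliary quantities $K_i^j$, $I_i^j$, $\widetilde K_i^j$, $J_i^j$; it first derives an ``unrolled'' formula (the $z_i^j$ written as an integral from $0$ to $t$ plus a sum over all previous pieces $a=1,\dots,j$) and only at the end rolls it up by induction on $j$ into the recursive form \eqref{ztilde}. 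Your route is more direct: you freeze the lower limit at $\theta_j(x)$ from the start (legitimate since $\varphi_j$ solves a smooth ODE and hence $\partial_t z_i^j=i!\,K_i^j$ for all $t$), and you recover the boundary value $z_i^j(\theta_j(x),x)$ in one stroke from the identity $G_j(\alpha_j(x,\varepsilon),x,\varepsilon)\equiv 0$. This gap-function argument is exactly the content of the paper's equation \eqref{delta}, but packaged more transparently; it avoids introducing $\widetilde K_i^j$ and the telescoping manipulation in \eqref{wtijaux}. Both approaches are equally rigorous, and the bookkeeping you flag at the end is indeed the only substantive work left.
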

\begin{proof}
First of all, recall the Faà di Bruno's formula for the $l$th derivative of the composed function: Let g and h be sufficiently smooth functions then
\begin{equation}\label{Faa}
	\dfrac{d^l}{d\alpha^l}g(h(\alpha))=\displaystyle\sum_{b\in S_l}\dfrac{l!}{b_1!b_2!2!^{b_2}\dots b_l!l!^{b_l}}g^{(L_b)}(h(\alpha))\displaystyle\prod_{j=1}^l\left( h^{(j)}(\alpha)\right)^{b_j},
\end{equation}  where $S_l$ is the set of all l-tuples of non-negative integers
$(b_1, b_2, \dots, b_l)$ satisfying $b_1 + 2b_2 + \dots + lb_l = l$, and $L =
b_1 + b_2 + \dots + b_l$.

	Here, we shall we expand $\varphi_j(t,x,\varepsilon)$, around $\varepsilon=0$ up to order $k$. By taking \eqref{Fj} into account and computing the expansion of $F_i^j(s,\varphi_j(s,x,\varepsilon))$ around $\varepsilon=0$ up to order $k-i$, we obtain
	\begin{equation}\label{intFij3}
	\begin{array}{rl}
	\displaystyle\int_{\alpha_{j}(x,\varepsilon)}^t F^j(s,\varphi_j(s,x,\varepsilon),\varepsilon)ds=&\displaystyle\int_{\alpha_{j}(x,\varepsilon)}^t \left(\displaystyle\sum_{i=1}^k \displaystyle\sum_{l=0}^{k-i}\dfrac{\varepsilon^{i+l}}{l!} \dfrac{\partial^l}{\partial\varepsilon^l}\left(F_i^j(s,\varphi_j(s,x,\varepsilon))\right)\Big|_{\varepsilon=0}\right) ds+O(\varepsilon^{k+1})\\
	=&\displaystyle\sum_{i=1}^k\varepsilon^{i} \displaystyle\int_{\alpha_{j}(x,\varepsilon)}^t \left(\displaystyle\sum_{l=0}^{i-1}\dfrac{1}{l!} \dfrac{\partial^l}{\partial\varepsilon^l}\left(F_{i-l}^j(s,\varphi_j(s,x,\varepsilon))\right)\Big|_{\varepsilon=0}\right) ds+O(\varepsilon^{k+1}).
	\end{array}
	\end{equation}
	For $i=1,\dots,k$, and $j=0,\dots,N$, denote
	\[
	K_i^j(t,x)=\displaystyle\sum_{l=0}^{i-1}\dfrac{1}{l!}\dfrac{\partial^l}{\partial\varepsilon^l}\left(F_{i-l}^j(t,\varphi_j(t,x,\varepsilon))\right)\Big|_{\varepsilon=0}.\]
By applying Faà di Bruno's formula \eqref{Faa} in the expression above, it follows that
	\begin{equation}\label{Kijexp}
		\begin{array}{rcl}
			K_1^j(t,x)&=&F_1^j(t,x),\\
			K_i^j(t,x)&=&F_{i}^j(t,x)+\displaystyle\sum_{l=1}^{i-1}\displaystyle\sum_{b\in S_l}\dfrac{1}{b_1!b_2!2!^{b_2}\dots b_l!l!^{b_l}}\partial^{L_b}_xF_{i-l}^j(t,x)\displaystyle\prod_{m=1}^l\left(z_m^j(t,x) \right)^{b_m},
		\end{array}
	\end{equation}
	for $i=2,\dots,k$, and $j=0,\dots,N$, where $L_b$ and $S_l$ are defined in \eqref{Faa}.
	
	Now, expanding $\displaystyle\int_{\alpha_{j}(x,\varepsilon)}^t K_{i}^j(s,x) ds$ around $\varepsilon=0$ up to order $k-i$, we get
	\begin{equation}\label{phij3}
		\begin{array}{rcl}
			\displaystyle\sum_{i=1}^k \varepsilon^i\displaystyle\int_{\alpha_{j}(x,\varepsilon)}^t K_{i}^j(s,x) ds&=&\displaystyle\sum_{i=1}^{k} \varepsilon^{i}\left( \displaystyle\sum_{p=0}^{k-i}\dfrac{\varepsilon^p}{p!}\dfrac{\partial^p}{\partial\varepsilon^p}\left( \displaystyle\int_{\alpha_{j}(x,\varepsilon)}^t K_i^j(s,x) ds\right)\Bigg|_{\varepsilon=0}+O(\varepsilon^{k-i+1})\right) \\
			&=&\displaystyle\sum_{i=1}^{k}  \displaystyle\sum_{p=0}^{k-i}\dfrac{\varepsilon^{i+p}}{p!}\dfrac{\partial^p}{\partial\varepsilon^p}\left( \displaystyle\int_{\alpha_{j}(x,\varepsilon)}^t K_i^j(s,x) ds\right)\Bigg|_{\varepsilon=0}+O(\varepsilon^{k+1}) \\
			&=&\displaystyle\sum_{i=1}^{k} \varepsilon^{i} \displaystyle\sum_{p=0}^{i-1}\dfrac{1}{p!}\dfrac{\partial^p}{\partial\varepsilon^p}\left( \displaystyle\int_{\alpha_{j}(x,\varepsilon)}^t K_{i-p}^j(s,x) ds\right)\Bigg|_{\varepsilon=0}+O(\varepsilon^{k+1}).
		\end{array}
	\end{equation}
	For $i=1,\dots, k$, and $j=0,\dots, N$, denote
	\begin{equation}\label{Iij}
		\begin{array}{l}
			I_i^j(t,x)=\displaystyle\sum_{p=0}^{i-1}\dfrac{1}{p!}\dfrac{\partial^p}{\partial\varepsilon^p}\left( \displaystyle\int_{\alpha_{j}(x,\varepsilon)}^t K_{i-p}^j(s,x) ds\right)\Bigg|_{\varepsilon=0}.
		\end{array}
	\end{equation}
	Thus,
	\begin{equation}\label{Iijexp}
		\begin{array}{rcll}
			I_1^j(t,x)&=& \displaystyle\int_{\theta_{j}(x)}^t K_{1}^j(s,x) ds,&\text{for } j=0,\ldots,N,\\
			I_i^j(t,x)&=& \displaystyle\int_{\theta_{j}(x)}^t K_{i}^j(s,x) ds+\widetilde{K}_i^j(x),&\text{for } i=2,\dots,k,\text{ and } j=0,\ldots,N,
		\end{array}
	\end{equation}
	provided that
	\begin{equation}\label{Ktilij}
		\widetilde{K}_i^j(x)=- \displaystyle\sum_{p=1}^{i-1}\dfrac{1}{p!}\dfrac{\partial^{p-1}}{\partial\varepsilon^{p-1}}\left( K_{i-p}^j(\alpha_{j}(x,\varepsilon),x) \dfrac{\partial}{\partial\varepsilon}\alpha_j(x,\varepsilon)\right)\Bigg|_{\varepsilon=0},
	\end{equation}
	for $i=1,\dots,k,\text{ and } j=0,\ldots,N.$
	
	Replacing \eqref{Iij} into \eqref{phij3} and, then, into \eqref{intFij3}, we get 
	\[\displaystyle\int_{\alpha_{j}(x,\varepsilon)}^t F^j(s,\varphi_j(s,x,\varepsilon),\varepsilon)ds=\displaystyle\sum_{i=1}^k\varepsilon^{i} I_i^j(t,x)+O(\varepsilon^{k+1}).\]
	Thus, replacing the expression above into \eqref{pviRec}, we obtain
	\[
	\left\lbrace\begin{array}{l}
	\varphi_0(t,x,\varepsilon)=x+\displaystyle\sum_{i=1}^k\varepsilon^i I_i^0(t,x)+O(\varepsilon^{k+1}),\\
	\varphi_j(t,x,\varepsilon)=\varphi_{j-1}(\alpha_{j}(x,\varepsilon),x,\varepsilon)+\displaystyle\sum_{i=1}^k\varepsilon^i I_i^j(t,x)+O(\varepsilon^{k+1}),
	\end{array}\right.\]
	for $j=1,\dots,N$. Hence, proceeding by induction on $j$, we conclude that
	\begin{equation}\label{claim}
	\begin{array}{rl}
	\varphi_j(t,x,\varepsilon)=x+\displaystyle\sum_{i=1}^k\varepsilon^iJ_i^j(t,x,\varepsilon)+O(\varepsilon^{k+1}),&\text{for } j=0,\dots,N,
	\end{array}
	\end{equation}
	where
	\begin{equation}\label{wij}
	\left\lbrace\begin{array}{ll}
	J_i^0(t,x,\varepsilon)=I_i^0(t,x),\\
	J_i^j(t,x,\varepsilon)=\displaystyle\sum_{l=0}^{j-1}I_i^{l}(\alpha_{l+1}(x,\varepsilon),x)+I_i^j(t,x),&
	\end{array}\right.
	\end{equation}
	for $i=1,\dots,k, \text{and }j=1,\dots,N.$
	
	Now, expanding $J_i^{j}(t,x,\varepsilon)$ around $\varepsilon=0$ up to order $k-i$, we get
	\begin{equation}\label{wij4}
		\begin{array}{rcl}
			\displaystyle\sum_{i=1}^k\varepsilon^iJ_i^{j}(t,x,\varepsilon)&=&\displaystyle\sum_{i=1}^k\varepsilon^i\left(\displaystyle\sum_{p=0}^{k-i}\dfrac{\varepsilon^p}{p!}\dfrac{\partial^p}{\partial\varepsilon^p}\left( J_i^{j}(t,x,\varepsilon)\right)\Big|_{\varepsilon=0}+O(\varepsilon^{k-i-1})\right)\\
			&=&\displaystyle\sum_{i=1}^k\varepsilon^i\displaystyle\sum_{p=0}^{i-1}\dfrac{1}{p!}\dfrac{\partial^p}{\partial\varepsilon^p}\left( J_{i-p}^{j}(t,x,\varepsilon)\right)\Big|_{\varepsilon=0}+O(\varepsilon^{k+1}).
	\end{array}
	\end{equation}
	Therefore, replacing \eqref{wij4} into \eqref{claim} and taking \eqref{varphiexp} into account, it follows that
	\begin{equation}\label{ztilijs}
	z_i^j(t,x)=i!\,\displaystyle\sum_{p=0}^{i-1}\dfrac{1}{p!}\dfrac{\partial^p}{\partial\varepsilon^p}\left( J_{i-p}^{j}(t,x,\varepsilon)\right)\Big|_{\varepsilon=0}.
	\end{equation}
	In particular, replacing \eqref{Kijexp} into \eqref{Iijexp}, we get 
	\begin{equation}\label{ztil1j}
		z_{1}^{j}(t,x)=\displaystyle\int_0^tF_1(s,x)ds.
	\end{equation}		
	Now, for $i=2,\dots,k$, and $j=0$, replacing \eqref{Iijexp} into \eqref{wij}, and then into \eqref{ztilijs}, we obtain
	\begin{equation}\label{ztil0i}
		z_i^0(t,x)=i!\displaystyle\sum_{l=0}^{j-1}\displaystyle\int_{\theta_l(x)}^{\theta_{l+1}(x)}K_i^l(s,x)ds+i!\displaystyle\int_{\theta_j(x)}^tK_i^j(s,x)ds.
	\end{equation}	
	Finally, for $j=1,\dots,N$, replacing \eqref{wij} into \eqref{ztilijs}, it writes
	\begin{equation}\label{zijcome}
		\begin{array}{rcl}
			z_i^j(t,x)&=&i!\displaystyle\sum_{p=0}^{i-1}\dfrac{1}{p!}\dfrac{\partial^p}{\partial\varepsilon^p}\left( \displaystyle\sum_{l=0}^{j-1}I_{i-p}^l(\alpha_{l+1}(x,\varepsilon),x)+I_{i-p}^j(t,x)\right)\Bigg|_{\varepsilon=0}\\
			&=&i!\left( \displaystyle\sum_{a=0}^{j-1} I_{i}^{a}(\theta_{a+1}(x),x)+I_{i}^j(t,x)\right) \\
			& &+i!\displaystyle\sum_{p=1}^{i-1}\dfrac{1}{p!}\displaystyle\sum_{a=0}^{j-1}\dfrac{\partial^p}{\partial\varepsilon^p}\left( I_{i-p}^{a}(\alpha_{a+1}(x,\varepsilon),x)\right)\Big|_{\varepsilon=0}.
		\end{array}
	\end{equation}
	From \eqref{Iijexp}, we have
	\begin{equation}\label{soma}
		\displaystyle\sum_{a=0}^{j-1} I_{i}^{a}(\theta_{a+1}(x),x)+I_{i}^j(t,x)=\displaystyle\sum_{a=0}^{j-1} \displaystyle\int_{\theta_{a}(x)}^{\theta_{a+1}(x)} K_i^a(s,x) ds+\displaystyle\int_{\theta_{j}(x)}^t K_i^j(s,x) ds+\displaystyle\sum_{a=0}^{j}\widetilde{K}_i^a(x) 
	\end{equation}
	and
	\begin{equation}\label{zijcomes}
		\begin{array}{l}
			\displaystyle\sum_{p=1}^{i-1}\dfrac{1}{p!}\dfrac{\partial^p}{\partial\varepsilon^p}\left( I_{i-p}^{a}(\alpha_{a+1}(x,\varepsilon),x)\right)\Big|_{\varepsilon=0}=\displaystyle\sum_{p=1}^{i-1}\dfrac{1}{p!}\dfrac{\partial^{p-1}}{\partial\varepsilon^{p-1}}\left( K_{i-p}^a(\alpha_{a+1}(x,\varepsilon),x)\dfrac{\partial}{\partial\varepsilon}\alpha_{a+1}(x,\varepsilon)\right)\Big|_{\varepsilon=0}.
		\end{array}
	\end{equation}
	From \eqref{solEdo}, we have $\alpha_0(x,\varepsilon)=0$, then from this and \eqref{Ktilij}, we obtain
	\begin{equation}\label{wtijaux}
		\begin{array}{l}
			\displaystyle\sum_{a=0}^{j}\widetilde{K}_{i}^{a}(x)+\displaystyle\sum_{a=0}^{j-1}\displaystyle\sum_{p=1}^{i-1}\dfrac{1}{p!}\dfrac{\partial^{p-1}}{\partial\varepsilon^{p-1}}\left(K_{i-p}^a(\alpha_{a+1}(x,\varepsilon),x)\dfrac{\partial}{\partial\varepsilon}\alpha_{a+1}(x,\varepsilon)\right)\Big|_{\varepsilon=0}\\
			=\displaystyle\sum_{a=1}^{j}\displaystyle\sum_{p=1}^{i-1}\dfrac{1}{p!}\dfrac{\partial^{p-1}}{\partial\varepsilon^{p-1}}\left(K_{i-p}^{a-1}(\alpha_{a}(x,\varepsilon),x)\dfrac{\partial}{\partial\varepsilon}\alpha_{a}(x,\varepsilon)\right)\Big|_{\varepsilon=0}\\
			-\displaystyle\sum_{a=0}^{j}\displaystyle\sum_{p=1}^{i-1}\dfrac{1}{p!}\dfrac{\partial^{p-1}}{\partial\varepsilon^{p-1}}\left(K_{i-p}^a(\alpha_a(x,\varepsilon),x)\dfrac{\partial\alpha_a}{\partial\varepsilon}(x,\varepsilon)\right)\Big|_{\varepsilon=0}\\
			=\displaystyle\sum_{a=1}^{j}\displaystyle\sum_{p=1}^{i-1}\dfrac{1}{p!}\dfrac{\partial^{p-1}}{\partial\varepsilon^{p-1}}\left(\left( K_{i-p}^{a-1}(\alpha_{a}(x,\varepsilon),x)-K_{i-p}^a(\alpha_a(x,\varepsilon),x)\right)\dfrac{\partial\alpha_{a}}{\partial\varepsilon}(x,\varepsilon)\right)\Big|_{\varepsilon=0}.
\end{array}
\end{equation}
	
	Therefore, from \eqref{ztil1j}, \eqref{ztil0i}, \eqref{zijcome}, \eqref{soma}, \eqref{zijcomes}, and \eqref{wtijaux}, we have that
	
	\begin{equation}\label{sist}
	\begin{array}{rcl}
	z_1^j(t,x)&=&\displaystyle\int_0^tF_1(s,x)ds,\\
	z_i^0(t,x)&=&i!\displaystyle\int_{\theta_j(x)}^tK_i^j(s,x)ds,\\
	z_i^j(t,x)&=&i!\displaystyle\sum_{l=0}^{j-1}\displaystyle\int_{\theta_l(x)}^{\theta_{l+1}(x)}K_i^l(s,x)ds+i!\displaystyle\int_{\theta_j(x)}^tK_i^j(s,x)ds\\
	& &	+i!\displaystyle\sum_{a=1}^{j}\displaystyle\sum_{p=1}^{i-1}\dfrac{1}{p!}\dfrac{\partial^{p-1}}{\partial\varepsilon^{p-1}}\left(\left( K_{i-p}^{a-1}(\alpha_{a}(x,\varepsilon),x)-K_{i-p}^a(\alpha_a(x,\varepsilon),x)\right)\dfrac{\partial\alpha_{a}}{\partial\varepsilon}(x,\varepsilon)\right)\Big|_{\varepsilon=0},
	\end{array}
	\end{equation}
	for $i=2,\dots,k$ and $j=1,\dots,N$.
	Notice that 
	\begin{equation*}\label{Kijf}
		K_i^j(t,x)=\dfrac{1}{i!}\dfrac{\partial z_i^j}{\partial t}(t,x), \text{ for } i=1,\dots,k,\text{ and } j=0,\dots,N.
	\end{equation*}
	Then, denoting $\delta_i^j(t,x)=\dfrac{1}{i!}\left( z_i^{j-1}(t,x)-z_i^j(t,x)\right) $, we get that
	\begin{equation}\label{delta}
		\begin{array}{rl}
			\dfrac{\partial^p}{\partial\varepsilon^p}\left( \delta_{i-p}^a(A_a^p(x,\varepsilon),x)\right) \Big|_{\varepsilon=0}=&\dfrac{\partial^p}{\partial\varepsilon^p}\left( \delta_{i-p}^a(\alpha_a(x,\varepsilon),x)\right) \Big|_{\varepsilon=0}\\
			=&\dfrac{\partial^{p-1}}{\partial\varepsilon^{p-1}}\left(\left( K_{i-p}^{a-1}(\alpha_{a}(x,\varepsilon),x)-K_{i-p}^a(\alpha_a(x,\varepsilon),x)\right)\dfrac{\partial\alpha_{a}}{\partial\varepsilon}(x,\varepsilon)\right)\Big|_{\varepsilon=0}
		\end{array}
	\end{equation}
	where $A_a^p(x,\varepsilon)=\displaystyle\sum_{q=0}^p\dfrac{\varepsilon^q}{q!}\alpha_a^q(x)$.
	Moreover,
	\begin{equation}\label{faadi}
		\begin{array}{l}
			\displaystyle\sum_{a=0}^{j-1} \displaystyle\int_{\theta_{a}(x)}^{\theta_{a+1}(x)} K_i^a(s,x) ds+\displaystyle\int_{\theta_{j}(x)}^t K_i^j(s,x) ds=\\
			=\displaystyle\int_0^t \left( F_{i}(s,x)+\displaystyle\sum_{l=1}^{i-1}\displaystyle\sum_{b\in S_l}B_b\partial^{L_b}_xF_{i-l}(s,x)\displaystyle\prod_{m=1}^l\left(z_m(s,x) \right)^{b_m}\right)  ds
		\end{array}
	\end{equation}
	where $$
	z_i(t,x)=\left\{\begin{array}{cc}
	z_i^0(t,x),& 0\leq t\leq \theta_1(x),\\
	z_i^2(t,x),&\theta_1(x)\leq t\leq\theta_2(x),\\
	\vdots&\vdots\\
	z_i^N(t,x),&\theta_N(x)\leq t\leq T.
	\end{array}\right.$$
	
	Hence, from \eqref{sist}, \eqref{delta}, and \eqref{faadi}, it writes
	\begin{equation}\label{final}
	\begin{array}{rcl}
	z_1^j(t,x)&=&\displaystyle\int_0^tF_1(s,x)ds,\\
	z_i^0(t,x)&=&i!\displaystyle\int_0^t \left( F_{i}^0(s,x)+\displaystyle\sum_{l=1}^{i-1}\displaystyle\sum_{b\in S_l}B_b\partial^{L_b}_xF_{i-l}^0(s,x)\displaystyle\prod_{m=1}^l\left(z_m^0(s,x) \right)^{b_m}\right)  ds,\\
	z_i^j(t,x)&=&i!\displaystyle\int_0^t \left( F_{i}(s,x)+\displaystyle\sum_{l=1}^{i-1}\displaystyle\sum_{b\in S_l}B_b\partial^{L_b}_xF_{i-l}(s,x)\displaystyle\prod_{m=1}^l\left(z_m(s,x) \right)^{b_m}\right)  ds\\
	& &+i!\displaystyle\sum_{a=1}^{j}\displaystyle\sum_{p=1}^{i-1}\dfrac{1}{p!}\dfrac{\partial^{p}}{\partial\varepsilon^{p}}\left(\delta_{i-p}^a(A_a^p(x,\varepsilon),x)\right)\Big|_{\varepsilon=0},
	\end{array}
	\end{equation}
	for $i=2,\dots,k$ and $j=1,\dots,N$. Notice that the formula \eqref{ztilde} follows from \eqref{final} by induction on $j$. Therefore, the proof of Proposition \ref{teoremaztil} is concluded by proving the following claim:

\begin{claim}\label{claim1}
For $q=1,\dots,k$ and $j=1,\dots,N$, we have that $\dfrac{\partial^q \alpha_j}{\partial\varepsilon^q}(x,0)=\alpha_j^q(x)$ where $\alpha_j^q$ is provided by \eqref{alphaj}.
\end{claim}
Indeed, from \eqref{defalphaj}, we get
\[
\alpha_j^q(x)=\dfrac{\partial^q}{\partial\varepsilon^q}\left(\theta_j(\varphi_{j-1}(\alpha_j(x,\varepsilon),x,\varepsilon))\right)\Big|_{\varepsilon=0}.
\]
From \eqref{varphiexp} and the above expression, we obtain
$$\alpha_j^q(x)=\dfrac{\partial^q}{\partial\varepsilon^q}\left(\theta_j\left(x+h(x,\varepsilon)\right)\right)\Big|_{\varepsilon=0},$$
where
\begin{equation*}\label{h}
h(x,\varepsilon)=\displaystyle\sum_{i=1}^k\dfrac{\varepsilon^i}{i!}z_i^{j-1}(\alpha_j(x,\varepsilon),x)+O\left(\varepsilon^{k+1}\right).
\end{equation*}

Computing the expansion of $h(x,\varepsilon)$, around $\varepsilon=0$ up to order $k-i$, we get
\begin{equation}\label{h2}
	\begin{array}{rcl}
		h(x,\varepsilon)&=&\displaystyle\sum_{i=1}^k\dfrac{\varepsilon^i}{i!}\displaystyle\sum_{a=0}^{k-i}\dfrac{\varepsilon^a}{a!}\dfrac{\partial^a}{\partial\varepsilon^a}\left(z_i^{j-1}(\alpha_j(x,\varepsilon),x)\right)\Big|_{\varepsilon=0}+O(\varepsilon^{k+1})\\
		&=&\displaystyle\sum_{i=1}^k\varepsilon^i\displaystyle\sum_{a=0}^{i-1}\dfrac{1}{(i-a)!a!}\dfrac{\partial^a}{\partial\varepsilon^a}\left(z_{i-a}^{j-1}(\alpha_j(x,\varepsilon),x)\right)\Big|_{\varepsilon=0}+O(\varepsilon^{k+1}).\\
\end{array}
\end{equation}

For $i=1,\dots,k$, and $j=1,\dots,N$, denote 
\begin{equation}\label{wchapij}
w_i^j(x)=\displaystyle\sum_{a=0}^{i-1}\dfrac{1}{(i-a)!a!}\dfrac{\partial^a}{\partial\varepsilon^a}\left(z_{i-a}^{j-1}(\alpha_j(x,\varepsilon),x)\right)\Big|_{\varepsilon=0}.
\end{equation}

Expanding $\theta_j(x+h(x,\varepsilon))$ in Taylor series in $h(x,\varepsilon)$, around $h(x,\varepsilon)=0$ up to order $k$, we have
\begin{equation}\label{derialps2}
	\begin{array}{rcl}
	\alpha_j^q(x)&=&\dfrac{\partial^q}{\partial\varepsilon^q}\left(\theta_j\left(x\right)+\displaystyle\sum_{l=1}^k\dfrac{1}{l!}D^l\theta_j(x)(h(x,\varepsilon))^l+O\left((h(x,\varepsilon))^{k+1}\right)\right)\Bigg|_{\varepsilon=0}\\
	&=&\dfrac{\partial^q}{\partial\varepsilon^q}\left(\displaystyle\sum_{l=1}^k\dfrac{1}{l!}D^l\theta_j(x)(h(x,\varepsilon))^l\right)\Bigg|_{\varepsilon=0}.
	\end{array}
\end{equation}
Thus, replacing \eqref{wchapij} into \eqref{h2}, and into \eqref{derialps2}, we obtain
\[
\begin{array}{l}
\alpha_j^q(x)=\dfrac{\partial^q}{\partial\varepsilon^q}\left(\displaystyle\sum_{l=1}^k\dfrac{1}{l!}D^l\theta_j(x)\left(\displaystyle\sum_{i=1}^k\varepsilon^i w_i^j(x)+O(\varepsilon^{k+1})\right)^l\right)\Bigg|_{\varepsilon=0}.\\
\end{array}
\]
According to the multilinearity of $D^l\theta_j(x)$ and the above expression, we have
\[
	\begin{array}{rcl}
		\alpha_j^q(x)&=&\dfrac{\partial^q}{\partial\varepsilon^q}\left(\displaystyle\sum_{l=1}^k\dfrac{1}{l!}D^l\theta_j(x)\left(\displaystyle\sum_{i=1}^k\varepsilon^i w_i^j(x))\right)^l+O(\varepsilon^{k+1})\right)\Bigg|_{\varepsilon=0}\\
		&=&\dfrac{\partial^q}{\partial\varepsilon^q}\left(\displaystyle\sum_{l=1}^k\dfrac{1}{l!}D^l\theta_j(x)\left(\displaystyle\sum_{i=1}^k\varepsilon^i w_i^j(x))\right)^l\right)\Bigg|_{\varepsilon=0}.
	\end{array}
\]
From the above expression and Lemma \ref{Ql}, it writes
\begin{equation}\label{alphaauxsd}
\alpha_j^q(x)=\displaystyle\sum_{l=1}^k\dfrac{1}{l!}\displaystyle\sum_{p=l}^{kl}\dfrac{\partial^q}{\partial\varepsilon^q}\left(\varepsilon^p\right)\Big|_{\varepsilon=0}\displaystyle\sum_{u\in S_{p,l}}D^l\theta_j(x)\left(\displaystyle\prod_{r=1}^l w_{u_r}^j(x)\right),
\end{equation}
where $S_{p,l}=\left\lbrace(u_1,\dots,u_l)\in (\mathbb{N}^*)^l:u_1+\dots+u_l=p\right\rbrace$. 

Notice that
$$
\dfrac{\partial^q}{\partial\varepsilon^q}\left(\varepsilon^b\right)\Big|_{\varepsilon=0}=\left\lbrace\begin{array}{rl}
	q!,& p=q,\\
	0,& p\neq q.
\end{array}\right.$$
Thus, from this and \eqref{alphaauxsd}, we obtain
\begin{equation}\label{dsalpha}
	\begin{array}{rl}
		\alpha_j^q(x)=\displaystyle\sum_{l=q}^k\dfrac{q!}{l!}\displaystyle\sum_{u\in S_{q,l}}D^l\theta_j(x)\left(\displaystyle\prod_{r=1}^l w_{u_r}^j(x)\right).
	\end{array}
\end{equation}

Note that if $q<l$ and exist $(b_1,\dots,b_l)\in S_{q,l}$, then
$l\leq\displaystyle\sum_{t=1}^lb_t=q<l.$
It is a contradiction, Thus, $S_{q,l}$ is empty for $q<l$. Then, from this fact, \eqref{wchapij}, and \eqref{dsalpha}, it is writes
\[
\alpha_j^q(x)=\displaystyle\sum_{l=1}^q\dfrac{q!}{l!}\displaystyle\sum_{u\in S_{q,l}}D^l\theta_j(x)\left(\displaystyle\prod_{r=1}^l w_{u_r}^j(x)\right),
\]
where
\[		w_1^j(x)={z}_{1}^{j-1}(\theta_j(x),x)\]
and applying the formula's Faà di Bruno in \eqref{wchapij}, we have
\[
	\begin{array}{rcl}
		w_i^j(x)&=&\dfrac{1}{i!}z_{i}^{j-1}(\theta_j(x),x)\\
		& &+\displaystyle\sum_{a=1}^{i-1}\displaystyle\sum_{b\in S_a}\dfrac{1}{(i-a)!b_1!b_2!2!^{b_2}\dots b_a!a!^{b_a}}\partial_t^{L_b}z_{i-a}^{j-1}(\theta_j(x),x)\displaystyle\prod_{m=1}^a\left(\alpha_j^m(x) \right)^{b_m}.
	\end{array}
\]
This finishes the proof of Claim \ref{claim1}.
\end{proof}

\section*{Acknowledgements}

We thank the anonymous referee for his/her comments that led to an improved version of this paper, in special for bringing the interesting references \cite{Han17,HS15,HY21,Lh10} to our attention.

We also thank Espa\c{c}o da Escrita -- Pr\'{o}-Reitoria de Pesquisa -- UNICAMP for the language services provided.

KSA is partially supported by CAPES grant 88887.308850/2018-00.
DRC is partially supported by CAPES grant 001.
DDN is partially supported by FAPESP grants 2018/16430-8, 2018/ 13481-0, and 2019/10269-3, and by CNPq grant 306649/2018-7. 
KSA, OARC, and DDN are partially supported by CNPq grant 438975/2018-9. 	  

\bibliographystyle{abbrv}
\bibliography{references.bib}

\begin{thebibliography}{10}

\bibitem{ArtLliMedTei2014}
J.~C. Art{\'e}s, J.~Llibre, J.~C. Medrado, and M.~A. Teixeira.
\newblock Piecewise linear differential systems with two real saddles.
\newblock {\em Mathematics and Computers in Simulation}, 95:13--22, 2014.

\bibitem{BBLN19}
J.~L. Bastos, C.~A. Buzzi, J.~Llibre, and D.~D. Novaes.
\newblock Melnikov analysis in nonsmooth differential systems with nonlinear
  switching manifold.
\newblock {\em Journal of Differential Equations}, 267(6):3748 -- 3767, 2019.

\bibitem{BerBuddCham2008}
M.~Bernardo, C.~Budd, A.~R. Champneys, and P.~Kowalczyk.
\newblock {\em Piecewise-smooth dynamical systems: theory and applications},
  volume 163.
\newblock Springer Science \& Business Media, 2008.

\bibitem{BraMel2014}
D.~D.~C. Braga and L.~F. Mello.
\newblock More than three limit cycles in discontinuous piecewise linear
  differential systems with two zones in the plane.
\newblock {\em International Journal of Bifurcation and Chaos}, 24(04):1450056,
  2014.

\bibitem{BPT13}
C.~Buzzi, C.~Pessoa, and J.~Torregrosa.
\newblock Piecewise linear perturbations of a linear center.
\newblock {\em Discrete Contin. Dyn. Syst.}, 33(9):3915--3936, 2013.

\bibitem{CLN17}
M.~R. C\^{a}ndido, J.~Llibre, and D.~D. Novaes.
\newblock Persistence of periodic solutions for higher order perturbed
  differential systems via {L}yapunov-{S}chmidt reduction.
\newblock {\em Nonlinearity}, 30(9):3560--3586, 2017.

\bibitem{ColJefLazOlm2017}
A.~Colombo, M.~Jeffrey, J.~T. L{\'a}zaro, and J.~M. Olm.
\newblock {\em Extended abstracts spring 2016: nonsmooth dynamics}, volume~8.
\newblock Birkh{\"a}user, 2017.

\bibitem{BraMel2013}
D.~de~Carvalho~Braga and L.~F. Mello.
\newblock Limit cycles in a family of discontinuous piecewise linear
  differential systems with two zones in the plane.
\newblock {\em Nonlinear Dynamics}, 73(3):1283--1288, 2013.

\bibitem{filippov2013differential}
A.~F. Filippov.
\newblock {\em Differential equations with discontinuous righthand sides},
  volume~18 of {\em Mathematics and its Applications (Soviet Series)}.
\newblock Kluwer Academic Publishers Group, Dordrecht, 1988.
\newblock Translated from the Russian.

\bibitem{FrePonTor2012}
E.~Freire, E.~Ponce, and F.~Torres.
\newblock Canonical discontinuous planar piecewise linear systems.
\newblock {\em SIAM Journal on Applied Dynamical Systems}, 11(1):181--211,
  2012.

\bibitem{FrePonTor2014}
E.~Freire, E.~Ponce, and F.~Torres.
\newblock The discontinuous matching of two planar linear foci can have three
  nested crossing limit cycles.
\newblock {\em Publicacions matematiques}, pages 221--253, 2014.

\bibitem{GianPli2001}
F.~Giannakopoulos and K.~Pliete.
\newblock Planar systems of piecewise linear differential equations with a line
  of discontinuity.
\newblock {\em Nonlinearity}, 14(6):1611, 2001.

\bibitem{Han17}
M.~Han.
\newblock On the maximum number of periodic solutions of piecewise smooth
  periodic equations by average method.
\newblock {\em J. Appl. Anal. Comput.}, 7(2):788--794, 2017.

\bibitem{HS15}
M.~Han and L.~Sheng.
\newblock Bifurcation of limit cycles in piecewise smooth systems via
  {M}elnikov function.
\newblock {\em J. Appl. Anal. Comput.}, 5(4):809--815, 2015.

\bibitem{HY21}
M.~Han and J.~Yang.
\newblock The maximum number of zeros of functions with parameters and
  application to differential equations.
\newblock {\em Journal of Nonlinear Modeling and Analysis}, 3(1):13--32, 2021.

\bibitem{HanZha2010}
M.~Han and W.~Zhang.
\newblock On hopf bifurcation in non-smooth planar systems.
\newblock {\em Journal of Differential Equations}, 248(9):2399--2416, 2010.

\bibitem{HuaYan2013}
S.-M. Huan and X.-S. Yang.
\newblock Existence of limit cycles in general planar piecewise linear systems
  of saddle--saddle dynamics.
\newblock {\em Nonlinear Analysis: Theory, Methods \& Applications}, 92:82--95,
  2013.

\bibitem{HuaYan2014}
S.-M. Huan and X.-S. Yang.
\newblock On the number of limit cycles in general planar piecewise linear
  systems of node--node types.
\newblock {\em Journal of Mathematical Analysis and Applications},
  411(1):340--353, 2014.

\bibitem{ILN2017}
J.~Itikawa, J.~Llibre, and D.~D. Novaes.
\newblock A new result on averaging theory for a class of discontinuous planar
  differential systems with applications.
\newblock {\em Rev. Mat. Iberoam.}, 33(4):1247--1265, 2017.

\bibitem{KASTU1966}
S.~Karlin and W.~Studden.
\newblock Tchebycheff systems: With applications in analysis and statistics.
  1966.

\bibitem{Li2014}
L.~Li.
\newblock Three crossing limit cycles in planar piecewise linear systems with
  saddle-focus type.
\newblock {\em Electronic Journal of Qualitative Theory of Differential
  Equations}, 2014(70):1--14, 2014.

\bibitem{Lh10}
X.~Liu and M.~Han.
\newblock Bifurcation of limit cycles by perturbing piecewise {H}amiltonian
  systems.
\newblock {\em Internat. J. Bifur. Chaos Appl. Sci. Engrg.}, 20(5):1379--1390,
  2010.

\bibitem{LMN15}
J.~Llibre, A.~C. Mereu, and D.~D. Novaes.
\newblock Averaging theory for discontinuous piecewise differential systems.
\newblock {\em J. Differential Equations}, 258(11):4007--4032, 2015.

\bibitem{LliNovRod2017}
J.~Llibre, D.~D. Novaes, and C.~A.~B. Rodrigues.
\newblock Averaging theory at any order for computing limit cycles of
  discontinuous piecewise differential systems with many zones.
\newblock {\em Phys. D}, 353/354:1--10, 2017.

\bibitem{LNT2014}
J.~Llibre, D.~D. Novaes, and M.~A. Teixeira.
\newblock Higher order averaging theory for finding periodic solutions via
  {B}rouwer degree.
\newblock {\em Nonlinearity}, 27(3):563--583, 2014.

\bibitem{LliNovTei2015b}
J.~Llibre, D.~D. Novaes, and M.~A. Teixeira.
\newblock Limit cycles bifurcating from the periodic orbits of a discontinuous
  piecewise linear differentiable center with two zones.
\newblock {\em International Journal of Bifurcation and Chaos}, 25(11):1550144,
  2015.

\bibitem{LNT15b}
J.~Llibre, D.~D. Novaes, and M.~A. Teixeira.
\newblock Maximum number of limit cycles for certain piecewise linear dynamical
  systems.
\newblock {\em Nonlinear Dynam.}, 82(3):1159--1175, 2015.

\bibitem{LNT15}
J.~Llibre, D.~D. Novaes, and M.~A. Teixeira.
\newblock On the birth of limit cycles for non-smooth dynamical systems.
\newblock {\em Bull. Sci. Math.}, 139(3):229--244, 2015.

\bibitem{LliPon2012}
J.~Llibre and E.~Ponce.
\newblock Three nested limit cycles in discontinuous piecewise linear
  differential systems with two zones.
\newblock {\em Dyn. Contin. Discrete Impuls. Syst. Ser. B Appl. Algorithms},
  19(3):325--335, 2012.

\bibitem{LRT11}
J.~Llibre, S.~Rebollo-Perdomo, and J.~Torregrosa.
\newblock Limit cycles bifurcating from isochronous surfaces of revolution in
  {$\Bbb R^3$}.
\newblock {\em J. Math. Anal. Appl.}, 381(1):414--426, 2011.

\bibitem{LliSwi2011}
J.~Llibre and G.~{\'S}wirszcz.
\newblock On the limit cycles of polynomial vector fields.
\newblock {\em Dyn. Contin. Discrete Impuls. Syst. Ser. A Math. Anal.},
  18(2):203--214, 2011.

\bibitem{LliTeiTor2013}
J.~Llibre, M.~A. Teixeira, and J.~Torregrosa.
\newblock Lower bounds for the maximum number of limit cycles of discontinuous
  piecewise linear differential systems with a straight line of separation.
\newblock {\em International Journal of Bifurcation and Chaos}, 23(04):1350066,
  2013.

\bibitem{LliZha2019}
J.~Llibre and X.~Zhang.
\newblock Limit cycles for discontinuous planar piecewise linear differential
  systems separated by an algebraic curve.
\newblock {\em International Journal of Bifurcation and Chaos}, 29(02):1950017,
  2019.

\bibitem{MakLam2012}
O.~Makarenkov and J.~S. Lamb.
\newblock Dynamics and bifurcations of nonsmooth systems: A survey.
\newblock {\em Physica D: Nonlinear Phenomena}, 241(22):1826--1844, 2012.

\bibitem{NovPon2015}
D.~D. Novaes and E.~Ponce.
\newblock A simple solution to the braga--mello conjecture.
\newblock {\em International Journal of Bifurcation and Chaos}, 25(01):1550009,
  2015.

\bibitem{NOTOR2017}
D.~D. Novaes and J.~Torregrosa.
\newblock On extended chebyshev systems with positive accuracy.
\newblock {\em Journal of Mathematical Analysis and Applications}, 448(1):171
  -- 186, 2017.

\bibitem{SVM}
J.~A. Sanders, F.~Verhulst, and J.~A. Murdock.
\newblock {\em Averaging methods in nonlinear dynamical systems}, volume~59.
\newblock Springer, 2007.

\bibitem{Sim2010}
D.~J.~W. Simpson.
\newblock {\em Bifurcations in piecewise-smooth continuous systems}, volume~70.
\newblock World Scientific, 2010.

\bibitem{Teixeira2011}
M.~A. Teixeira.
\newblock {\em Perturbation Theory for Non-smooth Systems}, pages 1325--1336.
\newblock Springer New York, New York, NY, 2011.

\end{thebibliography}

\end{document}